\newcommand{\pl}[1]{\foreignlanguage{polish}{#1}}
\newcommand{\de}[1]{\foreignlanguage{german}{#1}}
\newcommand{\CC}{\mathbb{C}}
\newcommand{\NN}{\mathbb{N}}
\newcommand{\RR}{\mathbb{R}}
\newcommand{\ZZ}{\mathbb{Z}}
\newcommand{\calC}{\mathcal{C}}
\newcommand{\calL}{\mathcal{L}}
\newcommand{\calO}{\mathcal{O}}
\newcommand{\calM}{\mathcal{M}}
\newcommand{\calB}{\mathcal{B}}
\newcommand{\calA}{\mathcal{A}}
\newcommand{\calN}{\mathcal{N}}
\newcommand{\calH}{\mathcal{H}}
\newcommand{\scrX}{\mathscr{X}}
\newcommand{\scrC}{\mathscr{C}}
\newcommand{\scrA}{\mathscr{A}}
\newcommand{\scrB}{\mathscr{B}}
\newcommand{\scrD}{\mathscr{D}}
\newcommand{\scrS}{\mathscr{S}}
\newcommand{\scrN}{\mathscr{N}}
\newcommand{\frakA}{\mathfrak{a}}
\newcommand{\bfc}{\mathbf{c}}
\newcommand{\St}{\operatorname{St}}
\newcommand{\Aut}{\operatorname{Aut}}
\newcommand{\GL}{\operatorname{GL}}
\newcommand{\Aff}{\operatorname{Aff}}
\newcommand{\height}{\operatorname{ht}}
\newcommand{\vphi}{\varphi}
\newcommand{\norm}[1]{\lvert {#1} \rvert}
\newcommand{\abs}[1]{\lvert {#1} \rvert}
\newcommand{\sprod}[2]{\langle {#1}, {#2} \rangle}
\newcommand{\dist}{\operatorname{dist}}
\newcommand{\inter}{\operatorname{int}}
\newcommand{\cspan}{\operatorname{\mathbb{C}-span}}
\newcommand{\zspan}{\operatorname{\mathbb{Z}-span}}
\newcommand{\supp}{\operatorname{supp}}
\theoremstyle{plain}
\newcounter{thm}
\newtheorem{main_theorem}[thm]{Theorem}
\newtheorem{theorem}{Theorem}[section]
\newtheorem{proposition}[theorem]{Proposition}
\newtheorem{lemma}[theorem]{Lemma}
\newtheorem{corollary}[theorem]{Corollary}
\newtheorem*{theorem*}{Theorem}
\theoremstyle{definition}
\newtheorem{remark}{Remark}
\numberwithin{equation}{section}
\title[Mass Functions and Caloric Functions]
{Mass Functions and Asymptotic Behavior of Caloric Functions on Affine Buildings}
\author{Effie Papageorgiou}
\address{
	\de{
		Effie Papageorgiou \\
		Institut f{\"u}r Mathematik \\
		Universit\"at Paderborn\\
		Warburger Str. 100\\ 
		D-33098 Paderborn\\
		Germany}
}
\email{papageoeffie@gmail.com}
\author{Bartosz Trojan}
\address{
	\pl{
		Bartosz Trojan\\
		Wydzia\l{} Matematyki,
		Politechnika Wroc\l{}awska\\
		Wyb. Wyspia\'{n}skiego 27\\
		50-370 Wroc\l{}aw\\
		Poland}
}
\email{bartosz.trojan@gmail.com}
\keywords{affine building, random walk, caloric functions, asymptotic formula, excotic building}
\subjclass[2020]{Primary: 35K08, 35B40, 51E24, 58J35, 60B15.
				Secondary: 20E42, 20F55, 22E35, 43A90, 60G50.}
\begin{document}

\selectlanguage{english}

\begin{abstract}
	We study the large-time asymptotic behavior of solutions to the discrete-time heat equation, i.e., caloric functions, 
	on affine buildings, including those without transitive group actions. For each $p \in [1, \infty]$, we introduce a notion 
	of a $p$-mass function and prove that caloric functions with initial data belonging to certain weighted-$\ell^1$ spaces or
	to the radial $\ell^1$ class, asymptotically decouple as the product of this mass function and the heat kernel. 
	These results extend classical analogues from Euclidean spaces and symmetric spaces of non-compact type to the
	non-Archimedean setting, and remain valid even for exotic buildings beyond the Bruhat--Tits framework.
	We characterize the spatial concentration of heat kernels in $p$-norms and describe the geometry of associated critical
	regions. Our results highlight substantial differences in the asymptotic regimes depending on the value of $p$, and clarify 
	the interplay between volume growth and heat diffusion. 
\end{abstract}

\maketitle

\section*{Introduction}
At the heart of harmonic analysis lies the study of solutions to the heat equation with the central r\^ole 
played by its fundamental solution $k$, called heat kernel. In Euclidean space $\RR^d$, the heat kernel
is given by
\[
	k_t(x) = (4 \pi t)^{-\frac{d}{2}} e^{-\frac{|x|^2}{4t}},
\]
representing the distribution of the heat at $x \in \RR^d$ at time $t > 0$ emanating at the point source at
the origin. Its smoothness, decay, and scaling properties make it a powerful tool in harmonic analysis, PDE theory, and 
probability theory. For $p \in [1, \infty]$, a straightforward computation yields
\[
	\|k_t\|_{L^p} = 
	(4 \pi t)^{-\frac{d}{2p'}} p^{-\frac{d}{2p}}
\]
where $p'$ is the H\"older conjugate exponent to $p$, defined by $1/p + 1/p' = 1$. If $u$ is a caloric function
with the initial datum $f$ belonging to $L^1(\RR^d)$, that is it solves
\begin{align*}
	\begin{cases}
		\partial_{t}u(t,x) = \Delta_{\mathbb{R}^{d}} u(t,x), &\text{for } t>0 \text{ and } x \in \RR^d\\[0,5pt]
		u(0,x) =f(x),
	\end{cases}
\end{align*}
then $u(t, x) = k_t * f(x)$. Hence, by the Young's convolution inequality we have
\[
	\|u(t; \cdot)\|_{L^p} \leqslant \|f\|_{L^1} \|k_t\|_{L^p}.
\]
In particular, $u(t; \cdot)$ uniformly goes to zero as time goes to infinity. The classical result says that $u(t, \cdot)$
for large time looks like $M_1(f) k_t$, where
\[
	M_1(f) = \int_{\RR^d} f(x) {\: \rm d} x.
\]
More precisely, for each $p \in [1, \infty]$,
\[
	\lim_{t \to \infty} \frac{1}{\|k_t\|_{L^p}} \big\| u(t; \cdot) - M_1(f) k_t \big\|_{L^p} = 0.
\]
This result has been also studied on certain manifolds of non-negative Ricci curvature under the assumption that 
the volume function satisfies the doubling condition, see \cite{GPZ2023}. In the forthcoming paper \cite{GPT2025},
we investigate the discrete counterpart. On the other end, V\'azquez in \cite{VazHyp},
showed that for real hyperbolic spaces $\mathbb{H}_n$, which are examples of negatively curved manifolds,
if the initial datum $f \in L^1(\mathbb{H}_n)$ is \emph{radial} and
\[
	M_1(f) = \int_{\mathbb{H}_n} f(x) {\: \rm d} x,
\]
then
\begin{equation}
	\label{eq:84}
	\lim_{t \to +\infty} 
	\frac{1}{\|k_t\|_{L^1(\mathbb{H}_n)}}
	\|u(t; \cdot) - M_1(f) k_t \|_{L^1(\mathbb{H}_n)} = 0.
\end{equation}
However, there is a radial $f$ for which $\|u(t; \cdot) - M_1(f) k_t \|_{L^\infty(\mathbb{H}_n)}$ is not 
$o(\|k_t\|_{L^\infty(\mathbb{H}_n})$. It has also been shown that for initial datum which is not radial the convergence in
\eqref{eq:84} may fail. The work \cite{VazHyp} has been extended to higher rank symmetric spaces of non-compact type in 
\cite{APZ2023}. V\'azquez's negative result for $L^\infty(\mathbb{H}_n)$
motivated the work \cite{NRS24}. To be more precise, let $\mathbb{X} = G/K$ be a symmetric space of non-compact type. 
In \cite{NRS24}, for each $p \in [1, \infty]$, the authors introduced certain function spaces $\calL^p(\mathbb{X})$ 
containing $L^1(\mathbb{X})$, for which the mass function $M_p(f)$ is defined by
\[
	M_p(f) = 
	\begin{cases}
		\hat{f}\big(i\rho(2/p-1)\big) & \text{if } p \in [1, 2), \\
		\hat{f}(0) &\text{if } p \in [2, \infty]
	\end{cases}
\]
where $\hat{f}$ denotes the spherical transform of $f$. The authors then showed that if $f$ is a bi-$K$-invariant 
initial datum belonging to $\calL^p(\mathbb{X})$, then
\begin{equation}
	\label{eq:85}
	\lim_{t \to +\infty} \frac{1}{\|k_t\|_{L^p(\mathbb{X})}} \big\|u(t; \cdot) - M_p(f) k_t\big\|_{L^p(\mathbb{X})} = 0.
\end{equation}
However, the convergence in \eqref{eq:85} may fail if $f$ is not bi-$K$-invariant. To study initial data that are not
necessarily bi-$K$-invariant, in \cite{P1} the first-named author to any compactly supported function $f$ on $\mathbb{X}$ 
associated a function $M_p(f)$ on $\mathbb{X}$ for which \eqref{eq:85} again holds true.
	
The aim of this paper is to study the asymptotic behavior of caloric functions on certain discrete spaces. Our main goal
is to investigate the non-Archimedean counterparts to symmetric spaces of non-compact type, that is, affine buildings.
They are complementary pieces at the heart of Lie theory  \cite{Helgason1979, BruhatTits1972}. 
Apart from strong topological differences, real and non-Archimedean simple Lie groups share many combinatorial and geometric 
properties. From the geometric point of view, they both act, with strong transitivity properties, on contractible spaces 
carrying nice non-positively curved complete distances. The corresponding spaces for groups over totally disconnected local
fields are Bruhat--Tits buildings. Bruhat--Tits buildings, or more generally affine buildings when no group is assumed to 
act transitively on them, are unions of Euclidean tilings, called apartments, playing the r\^ole of maximal flats in 
symmetric spaces. Apartments contain Weyl cones, also called sectors.

To precisely state our first result, we need to introduce some notation. Let $\Phi$ be the type of the building, that is 
$\Phi$ is the affine root system in $\mathfrak{a}$, where $\mathfrak{a}$ is the $r$-dimensional Euclidean space on which 
apartments are modeled, and let $\Phi^{++}$ denote the set of indivisible positive roots in $\Phi.$ By $W$ we denote 
the corresponding (spherical) Weyl group. Given a transition function $k$ of the isotropic finite range random walk on 
good vertices $V_g$ of the building (called admissible for short), we set
\[
	\begin{aligned}
	k(1; x, y) &= k(x, y), \\
	k(n+1; x, y) &= \sum_{z \in V_g} k(n; x, z) k(z, y) \qquad (n, x, y) \in \NN_0 \times V_g \times V_g.
	\end{aligned}
\]
We consider the corresponding averaging operator acting on functions on $V_g$ as
\[
	A f(x)=\sum_{y \in V_g} k(y, x) f(y).
\]
The Gelfand--Fourier transform of $A$, denoted by $\hat{A}$, is a $W$-invariant exponential polynomial expressed 
as a combination of Macdonald's spherical functions $P_\omega$. Let $\varrho$ be the spectral radius of the random walk 
and $\kappa=\varrho^{-1}\widehat{A}$. We fix a good vertex $o$. Let $\eta$ be a properly weighted sum of positive roots,
see \eqref{eq:6} for precise definition. At this stage it is enough to remember that $\eta$ belongs to the positive Weyl
chamber. In the following theorem we summarize the main results proved in Theorems \ref{thm:1}, \ref{thm:2} and \ref{thm:3}.
\begin{main_theorem}
	\label{thm:A}
	Let $k$ be a transition kernel of an admissible random walk on good vertices of an affine building 
	$\mathscr{X}$. Then
	\begin{enumerate}[label=(\roman*), ref=\roman*]
		\item if $p \in [1, 2)$,
		\[
			\|k_n(o, \cdot)\|_{\ell^p}\approx n^{-\frac{r}{2p'}} 
			\varrho^{n} \kappa\left(\eta\left(\tfrac{2}{p}-1\right)\right)^{n};
		\]
		\item if $p=2$,
		\[
			\|k_n(o, \cdot)\|_{\ell^2}\approx n^{-\frac{r}{4}-\frac{|\Phi^{++}|}{2}} \varrho^{n};
		\]
		\item if $p \in (2, \infty]$,
		\[
			\|k_n(o, \cdot)\|_{\ell^p} \approx n^{-\frac{r}{2}-|\Phi^{++}|} \varrho^{n}.
		\]
	\end{enumerate}
\end{main_theorem} 
Let us observe that for $p \in (2, \infty]$, the asymptotic formula for the norm $\|k_n(o, \cdot)\|_{\ell^p}$
is independent of $p$. For $p \in [1, 2)$ one only sees the rank and not the pseudodimension $|\Phi^{++}|$, and the exponential 
decay in $n$ is slower than for $p \in [2, \infty)$. And lastly, the case $p = 2$ is not a limit of the cases $[1, 2)$ nor 
$(2, \infty]$.

In each of Theorems \ref{thm:1}, \ref{thm:2} and \ref{thm:3}, in the second halves we describe the $\ell^p$ concentration of 
the heat kernel, namely, the regions $\scrN_n^p \subseteq V_g$ such that 
$\|k_n(o, \cdot)\|_{\ell^p(V_g \smallsetminus \scrN_n^p)}=o(\|k_n(o, \cdot)\|_{\ell^{p}})$.
This phenomenon reflects the interplay between the heat kernel decay on the one side and the volume growth on 
the other. Concerning the continuous counterparts of affine buildings, in \cite[Corollary 5.7.3]{Dav1989}
it was observed that the heat diffusion on real hyperbolic spaces propagates asymptotically with finite speed. This striking 
phenomenon was later on extended to non-compact symmetric spaces $\mathbb{X}$, by analytic means in \cite{AnSe1992} and 
probabilistic ones in \cite{B94}. Going beyond $L^1(\mathbb{X})$ and considering the general $L^p(\mathbb{X})$ setting has been
done in \cite{aj}. Theorem \ref{thm:A} is the building analogue of the latter.

Our next result concerns the asymptotic behavior of caloric functions. To be more precise, the caloric function $u$ with 
the initial datum $f \in \ell^p(V_g)$, $p \in [1, \infty]$, is the solution to
\begin{equation}
	\label{eq:86}
	\begin{aligned}
		u(n+1, x) &= A u(n, x) \\
		&= \sum_{y \in V_g} k(y, x) u(n, y) \\
		u(0, x) &= f(x), \qquad (n, x) \in \NN_0 \times V_g.
	\end{aligned}
\end{equation}
It is easy to see that the unique solution to \eqref{eq:86} is given by the formula
\[
	u(n, x) = \sum_{y \in V_g} k_n(y, x) f(y).
\]
Before we define the \emph{mass functions} we need some more notation: Let $\sigma(x, y)$ denote the vectorial distance between
$x \in V_g$ and $y \in V_g$. The maximal boundary $\Omega$ can be viewed as the set of all sectors tipped at $o$. For
any $x \in V$, we set $\Omega(o, x)$ to be the set of all sectors tipped at $o$ containing $x$. The maximal boundary 
carries the harmonic measure $\nu$ centered at $o$, such that $\nu(\Omega(o, x))$ depends only on the vectorial distance
$\sigma(o, x)$. Let $\chi_0$ be the \emph{fundamental character} of the coweight lattice, see \eqref{eq:10} for the definition.
Lastly, let $h(x, y; \omega)$ be the Busemann (horocycle) function for $x, y \in V_g$ and $\omega \in \Omega$.
With all the notations we can introduce the Macdonalds function
\[
	P_{\sigma(x,y)}(z) = \int_\Omega e^{\sprod{\eta + z}{h(x, y; \omega)}} \, \nu_x({\rm d} \omega), 
	\qquad z \in \mathfrak{a}_{\CC}.
\]
In order to define the mass function, let us introduce proper weighted spaces: We say that $f$ belongs to $\ell^1(w_p)$,
$p \in [1, \infty]$, if
\[
	\sum_{y \in V_g} |f(y)| w_p(y) < \infty
\]
where the weight $w_p$ is defined by the formula
\[
	w_p(y) =
		\begin{cases} \exp\big(\tfrac{2}{p}\sprod{\eta}{\sigma(o, y)}\big),  &\text{if } p \in [1, 2), \\
	\exp\big(\sprod{\eta}{\sigma(o, x)}\big), &\text{if } p \in [2, \infty].
	\end{cases}
\]
Now, for $f \in \ell^1(w_p)$ the mass function $M_p(f)$ is defined by the formula
\begin{align} 
	\label{eq:55}
	M_p(f)(x) &= \sum_{y \in V_g} f(y)	
	\fint_{\Omega(o,x)} \chi_{0}(h(o,y;\omega))^\frac{1}{p} \nu({\rm d} \omega), \qquad &\text{if } p \in [1, 2), \\
\intertext{or}
	\label{eq:56}
	M_p(f)(x) &= \frac{1}{P_{\sigma(o, x)}(0)} \sum_{y \in V_g} f(y) P_{\sigma(y, x)}(0), &\text{if } 
	p \in [2, \infty). 
\end{align}
We say that a function $f$ is radial if $f(x)$ depends only on $\sigma(o, x)$. In Section \ref{sec:6}, we show that the mass
function is well-defined for radial $f \in \ell^1(V_g)$, and
\begin{equation}
	\label{eq:87}
	M_p(f) \equiv \sum_{y\in V_g} f(y) P_{\sigma(o,y)}(s_p)
\end{equation}
where
\[
	s_p =
	\begin{cases}
		\eta\Big(\tfrac{2}{p} - 1 \Big), &\text{if } p \in [1, 2), \\
		0 &\text{if } p \in [2, \infty).
	\end{cases}
\]
The following theorem is a consequence of Theorems \ref{thm:11} and \ref{thm:10}. 
\begin{main_theorem}
	Let $k_n$ be a transition kernel of an admissible random walk on good vertices of an affine building $\scrX$.
	If $u$ is a solution to \eqref{eq:86} where $f \in \ell^1(w_p)$, $p \in [1, \infty]$ or $f$ is a radial and
	belongs to $\ell^1(V_g)$, then
	\[ 
		\lim_{n \to \infty} 
		\frac{1}{\|k_n(o, \cdot)\|_{\ell^p}}
		\| u(n;\cdot)-M_p(f)(\cdot)\, k_n(o,\cdot)\|_{\ell^p}=0.
	\]
\end{main_theorem}
The most important class of functions in $\ell^1(w_p)$ are whose that are finitely supported. For this class Theorem
\ref{thm:6} provides also the rate of convergence. In view of \eqref{eq:87}, for radial functions in $\ell^1(V_g)$
we have 
\[
	M_1(f) \equiv \sum_{y \in V_g} f(y),
\]
thus we obtain the discrete analogues of \cite{VazHyp}, \cite{APZ2023} and \cite{P1}. Let us also emphasize that we wish to cover 
situations where no sufficiently transitive group action is available which is a way to include some intriguing lower-dimensional 
exotic affine buildings (for existence see e.g. \cite{RonanExotic}), which is a deviation from the continuous counterpart.

In addition, let us stress the remarkably different behavior of the mass functions for $p \in [1, 2)$ and for 
$p \in (2, \infty]$. It is a result of the different regions of $\ell^p$ heat propagation, as described in Theorems \ref{thm:1} 
and \ref{thm:3}, respectively. In the case $p=2$ the exponential decay of $k_n(o, \cdot)^2$ cancels the exponential volume 
growth. To further highlight this phenomenon, we observe that the convergence result for $p=2$ holds 
not only for the mass function $M_2(f)$ defined by \eqref{eq:56}, but also for the mass function \eqref{eq:55}. It is worth pointing out that these two functions \emph{do not} necessarily coincide, unless $f$ is radial, whence they both give 
the constant 
\[
	M_2(f)=\sum_{y\in V_g} f(y) P_{\sigma(o,y)}(0).
\]
The heart of the proofs in this work is a careful use and analysis of asymptotic behavior of both the heat kernel itself and
its quotients in certain regions in the Cr{\'a}mer's zone. The asymptotic formulas for the transition densities $k_n(x,y)$ of 
finite range isotropic random walks on affine buildings were the subject of the recent article \cite{tr} of the second-named 
author. In the continuous setting, namely symmetric spaces of non-compact type, the large-time behavior of caloric functions 
was analyzed on the ``frequency'' side in \cite{APZ2023} and \cite{NRS24}; on real hyperbolic space \cite{VazHyp} the methods 
are reduced to arguments for functions of one variable, while heat kernel asymptotic behavior were used only recently in 
\cite{P1}. Nevertheless, let us point out certain challenges compared to the Archimedean setting. First of all, no group action 
is assumed throughout this text, so instead of group-theoretic tools employed in the continuous setting (such as the Iwasawa 
decomposition), one has to work in geometric language; compare for instance the mass functions for $p \in [1, 2)$ on affine
buildings in \eqref{eq:55} and in the Archimedean setting \cite{P1}. Moreover, there is no strictly Gaussian term in the used 
asymptotics \eqref{eq:3a} and \eqref{eq:3b}, and one cannot simply introduce a Gaussian term without also introducing an 
hard-to-control error as well. Finally, a difference in homogeneity is worth pointing out: the r\^ole of $\eta$ on affine 
buildings, say for instance on the definition of the $\ell^p(V_g)$ critical regions or of the mass functions for $p \in [1, 2)$, 
is played on symmetric spaces by $\rho$. We also introduce a Helgason transform on affine buildings including the exotic ones,
which is of independent interest, see Theorems \ref{thm:12} and Theorem \ref{thm:14}.

This paper is organized as follows: In Section \ref{sec:1} we present some preliminaries on affine buildings and the 
corresponding harmonic analysis, and random walks. In Section \ref{sec:2} we discuss the $\ell^p$ norms of heat kernels as well 
as $\ell^p$ heat concentration on critical regions, while Section \ref{sec:3} is devoted to obtaining asymptotics between ratios
of transition densities, as an indispensable tool for our work. Finally, in Section \ref{sec:4} we discuss the asymptotic 
behavior of caloric functions. We start by introducing the Helgason transform and its connection to the spherical transform 
for radial functions, then introduce mass functions for $p \in [1, \infty]$, and we discuss their properties. We next proceed 
to showing that a caloric function with a finitely supported initial datum converges in the $\ell^p$ critical region to the mass
function times the transition density. On the other hand, we show that both the solution and the heat kernel vanish
asymptotically outside that critical regions. In the last subsection, we discuss these problems for more general initial 
data in the $\ell^p$, $p\in [1, \infty]$ setting. In Appendix \ref{sec:8}, for convenience of the reader we provide
short proofs of Herz's \emph{principe de majoration} and Kunze--Stein phenomenon, for radial functions. 

\subsection*{Notation} Throughout this paper, we use the convention that $C, C', c, c',...$ stand for a generic positive 
constant whose value can change from line to line. The notation $A\lesssim B$ between two positive expressions means that 
there is a constant $C>0$ such that $A \leqslant C B$. The notation $A\approx B$ means that $A\lesssim B$ and $B\lesssim A$.
By $\NN$ we denote non-negative integers, whereas $\NN_0$ consists of positive integers.

\subsection*{Statements and Declarations}
The first named author acknowledges support by the Deutsche Forschungsgemeinschaft (DFG, German Research Foundation) 
SFB-Geschäftszeichen Projektnummer SFB-TRR 358/1 2023 491392403. This research was partially supported by the Polish National 
Science Centre Grant No. 2023/49/B/ST1/00678. 

\section{Affine buildings}
\label{sec:1}
	
\subsection{Buildings}
\label{sec:1.1}
A family $\scrX$ of non-empty finite subsets of some set $V$ is an \emph{abstract simplicial complex}
if for all $\sigma \in \scrX$, each subset $\gamma \subseteq \sigma$ also belongs to $\scrX$. The elements of $\scrX$
are called \emph{simplices}. The dimension of a simplex $\sigma$ is $\#\sigma - 1$. Zero dimensional simplices are called
\emph{vertices}. The set $V(\scrX) = \bigcup_{\sigma \in \scrX} \sigma$ is the \emph{vertex set} of $\scrX$.
The dimension of the complex $\scrX$ is the maximal dimension of its simplices. A \emph{face} of a simplex $\sigma$
is a non-empty subset $\gamma \subseteq \sigma$. For a simplex $\sigma$  we denote by $\St(\sigma)$ the collection of
simplices containing $\sigma$; in particular, $\St(\sigma)$ is a simplicial complex. Two abstract simplicial complexes
$\scrX$ and $\scrX'$ are \emph{isomorphic} if there is a bijection $\psi: V(\scrX) \rightarrow V(\scrX')$ such that for
all $\sigma = \{x_1, \ldots, x_k\} \in \scrX$ we have $\psi(\sigma) = \{\psi(x_1), \ldots, \psi(x_k)\} \in \scrX'$.
With every abstract simplicial complex $\scrX$ one can associate its \emph{geometric realization} $|\scrX|$ in the vector 
space of functions $V \rightarrow \RR$ with finite support, see e.g. \cite[\S2]{Munkres1996}.
	
A set $\scrC$ equipped with a collection of equivalence relations $\{\sim_i : i \in I\}$ where $I = \{0, \ldots, r\}$,
is called a \emph{chamber system} and the elements of $\scrC$ are called \emph{chambers}. A \emph{gallery} of type
$f = i_1 \ldots i_k$ in $\scrC$ is a sequence of chambers $(c_1, \ldots, c_k)$ such that for all
$j \in \{1,2, \ldots k\}$, we have $c_{j-1} \sim_{i_j} c_j$, and $c_{j-1} \neq c_j$. If $J \subseteq I$, a \emph{residue} of 
type $J$ is a subset of $\scrC$ such that any two chambers can be joined by a gallery of type $f = i_1 \ldots i_k$ with
$i_1, \ldots, i_k \in J$. From a chamber system $\scrC$ we can construct an abstract simplicial complex where each
residue of type $J$ corresponds to a simplex of dimension $r - \#J$. Then, for a given vertex $x$, we denote by
$\calC(x)$ the set of chambers containing $x$. 
	
A \emph{Coxeter group} is a group $W$ given by a presentation
\[
	\left\langle
	r_i : (r_i r_j)^{m_{i, j}} = 1, \text{ for all } i, j \in I
	\right\rangle
\]
where $M = (m_{i,j})_{I \times I}$ is a symmetric matrix with entries in $\ZZ \cup \{\infty\}$ such that for all
$i, j \in I$,
\[
	m_{i,j}=
	\begin{cases}
	\geqslant2 & \text{if }i\neq j,\\
	1 & \text{if }i=j.
	\end{cases}
\]
For a word $f=i_1 \cdots i_k$ in the free monoid $I$ we denote by $r_f$ an element of $W$ of the form
$r_f= r_{i_1} \cdots r_{i_k}$. The length of $w \in W$, denoted $\ell(w)$, is the smallest integer $k$ such that there
is a word $f=i_1 \cdots i_k$ and $w=r_f$. We say that $f$ is reduced if $\ell(r_f) = k$. A Coxeter group $W$ may be turned
into a chamber system by introducing in $W$ the following collection of equivalence relations:
$w \sim_i w'$ if and only if $w = w'$ or $w = w' r_i$. The corresponding simplicial complex $\Sigma$ is called 
\emph{Coxeter complex}. 
	
A simplicial complex $\scrX$ is called a \emph{building of type $\Sigma$} if it contains a family of subcomplexes called
\emph{apartments} such that
\begin{enumerate}[start=0, label=(B\roman*), ref=B\roman*]
	\item[(Bi)]\label{en:3:1}
	each apartment is isomorphic to $\Sigma$,
	\item[(Bii)]\label{en:3:2}
	any two simplices of $\scrX$ lie in a common apartment,
	\item[(Biii)]\label{en:3:3}
	for any two apartments $\scrA$ and $\scrA'$ having a chamber in common there is an 
	isomorphism $\psi: \scrA \rightarrow \scrA'$ fixing $\scrA \cap \scrA'$ pointwise.
\end{enumerate}
The rank of the building is the cardinality of the set $I$. We always assume that $\scrX$ is irreducible.
A simplex $c$ is a chamber in $\scrX$ if it is a chamber in any of its apartments. By $C(\scrX)$ we denote the set of chambers 
in $\scrX$. Using the building axioms we see that $C(\scrX)$ has a chamber system structure. However, it is not unique. 
A geometric realization of the building	$\scrX$ is its geometric realization as an abstract simplicial complex. In this article 
we assume that the system of apartments in $\scrX$ is \emph{complete}, meaning that any subcomplex of $\scrX$ isomorphic to 
$\Sigma$ is an apartment. We denote by $\Aut(\scrX)$ the group of automorphisms of the building $\scrX$.

\subsection{Affine Coxeter complexes}
\label{sec:1.2}
In this section we recall basic facts about root systems and Coxeter groups. A general reference is \cite{Bourbaki2002},
which deals with Coxeter systems attached to reduced root systems. Since we use from the beginning possibly non-reduced
root systems, we will also refer to \cite{mz1, park}.

Let $\Phi$ be an irreducible, but not necessarily reduced, finite root system in a Euclidean space $\mathfrak{a}$. 
Select a subset $\Phi^+$ of positive roots and let $\{\alpha_1,\dots,\alpha_r\}$ be the corresponding basis of simple roots.

For a positive root $\alpha \in \Phi^+$, its height is defined as
\[
	\height(\alpha)=\sum_{i=1}^rn_i 
\]
wherever $\alpha=\sum_{i=1}^rn_i\alpha_i$ with $n_i\in\NN$. Since $\Phi$ is irreducible, there is a unique highest root
$\alpha_0=\sum_{i=1}^rm_i\alpha_i$ with $m_i\in\NN_0$. We set
\[
	I_0=\{1,\dots,r\}
	\quad\text{and}\quad
	I_g=\{0\}\cup\{i\in I_0:m_i=1\}.
\]
Let $\Phi\spcheck=\{\alpha\spcheck =\frac2{\sprod{\alpha}{\alpha}}\alpha:\alpha\in\Phi\}$ be the dual root system.
The $\zspan$ of $\Phi\spcheck$ is the \emph{co-root lattice} $Q$, whose dual is the \emph{weight lattice} $L$.
Let $Q^+=\sum_{\alpha\in\Phi^+}\NN_0\alpha\spcheck$. The dual basis $\{\lambda_1,\dots,\lambda_r\}$ to 
$\{\alpha_1,\dots,\alpha_r\}$ consists of \emph{fundamental co-weights} and its $\zspan$ is the \emph{co-weight lattice} $P$.
Let $P^+=\sum_{i=1}^r\NN_0\lambda_i$ be the cone of \emph{dominant co-weights} and let $\sum_{i=1}^r\NN \lambda_i$ be 
the subcone of \emph{strongly dominant co-weights}. For instance
\[
	\rho=\sum_{i=1}^r\lambda_i
\]
is a strongly dominant co-weight. Notice that
\[
	\rho=\frac12\sum_{\alpha \in \Phi^{++}} \alpha\spcheck,
\]
where $\Phi^{++}$ denotes the set of indivisible positive roots in $\Phi$, that is the set of roots $\alpha \in \Phi^+$
such that $\frac{1}{2} \alpha \notin \Phi^+$.

For $\lambda \in P^+$, by $\Pi_{\lambda}$ we denote the saturation set of $\lambda$, that is
\[
	\Pi_\lambda= \{w.\mu:\mu\in P^+,\, \mu\preceq\lambda,\, w\in W\},
\]
whereas $\mu \preceq \lambda$, if and only if $\lambda - \mu\in Q^+$. If $\lambda, \mu \in P^+$ we write $\lambda \gg \mu$
whenever $\lambda - \Pi_\mu \subset P^+$. 

Let $\calH$ be the family of affine hyperplanes, called \emph{walls}, being of the form
\[
	H_{\alpha; k} = \big\{x \in \frakA : \langle x, \alpha \rangle = k \big\}
\]
where $\alpha \in \Phi^+$ and $k \in \ZZ$. Each wall determines two half-apartments
\[
	H^-_{\alpha; k} = \big\{x \in \frakA : \langle x, \alpha \rangle \leqslant k\big\}
	\quad\text{and}\quad
	H^+_{\alpha; k} = \big\{x \in \frakA : \langle x, \alpha \rangle \geqslant k\big\}.
\]
Note that for a given $\alpha$, the family $H^-_{\alpha; k}$ is increasing in $k$ while the family $H^+_{\alpha; k}$ is
decreasing. To each wall we associate $r_{\alpha; k}$ the orthogonal reflection in $\frakA$
defined by 
\[
	r_{\alpha; k}(x) = x - \big(\sprod{x}{\alpha} - k\big)\alpha\spcheck.
\]
Set $r_0 = r_{\alpha_0; 1}$, and $r_i = r_{\alpha_i; 0}$ for each $i \in I_0$. 
	
The \emph{finite Weyl group} $W$ is the subgroup of $\GL(\mathfrak{a})$ generated by $\{r_i: i \in I_0\}$. Let us denote
by $w_0$ the longest element in $W$. The \emph{fundamental sector} in $\frakA$ defined as
\[
	S_0 = \big\{x \in \frakA : \sprod{x}{\alpha_i} \geqslant 0 \text{ for all } i \in I_0\big\} 
	= \bigoplus_{i \in I_0} \RR_+ \lambda_i 
	= \bigcap_{i \in I_0} H^+_{\alpha_i; 0}
\]
is the fundamental domain for the action of $W$ on $\frakA$.
	
The \emph{affine Weyl group} $W^a$ is the subgroup of $\Aff(\mathfrak{a})$ generated by $\{r_i: i \in I\}$. Observe that 
$W^a$ is a Coxeter group. The hyperplanes $\calH$ give the geometric realization of its Coxeter complex $\Sigma_\Phi$. To see
this, let $C(\Sigma_\Phi)$ be the family of closures of the connected components of
$\frakA \setminus \bigcup_{H \in \calH} H$. By $C_0$ we denote the \emph{fundamental chamber} (or \emph{fundamental alcove}), 
i.e.
\[
	C_0 = \big\{x \in \frakA : \sprod{x}{\alpha_0} \leqslant 1 \text{ and } \sprod{x}{\alpha_i} \geqslant 0 \text{ for all } 
	i \in I_0\big\} = \bigcap_{i \in I_0} H^+_{\alpha_i; 0}  \cap H^-_{\alpha_0; 1}
\]
which is the fundamental domain for the action of $W^a$ on $\frakA$. Moreover, the group $W^a$ acts simply transitively 
on $C(\Sigma_\Phi)$. This allows us to introduce a chamber system in $C(\Sigma_\Phi)$: For two chambers $C$ and $C'$ and
$i \in I$, we set $C \sim_i C'$ if and only if $C = C'$ or there is $w \in W^a$ such that $C = w . C_0$ and
$C' = w r_i . C_0$. 
	
The vertices of $C_0$ are $\{0, \lambda_1/m_1, \ldots, \lambda_r/m_r\}$. Let us denote the set of vertices of all 
$C \in C(\Sigma_\Phi)$ by $V(\Sigma_\Phi)$. Under the action of $W^a$, the set $V(\Sigma_\Phi)$ is made up of $r+1$
orbits $W^a.0$ and $W^a.(\lambda_i/m_i)$ for all $i \in I_0$. Thus setting $\tau_{\Sigma_\Phi}(0) = 0$, and
$\tau_{\Sigma_\Phi}(\lambda_i/m_i) = i$ for $i \in I_0$, we obtain the unique labeling
$\tau_{\Sigma_\Phi} : V(\Sigma_\Phi) \rightarrow I$ such that any chamber $C \in C(\Sigma_\Phi)$ has one vertex with each label. 
	
For each simplicial automorphism $\vphi: \Sigma_\Phi \rightarrow \Sigma_\Phi$ there is a permutation $\pi$ of the set $I$
such that for all chambers $C$ and $C'$, we have $C \sim_i C'$ if and only if $\vphi(C) \sim_{\pi(i)} \vphi(C')$, and 
\[
	\tau_{\Sigma_\Phi}(\vphi(v)) = \pi(\tau_{\Sigma_\Phi}(v)),
	\qquad\text{for all } v \in V(\Sigma_\Phi).
\]
A vertex $v$ is called \emph{special} if for each $\alpha \in \Phi^+$ there is $k$ such that $v$ belongs to $H_{\alpha; k}$.
The set of all special vertices is denoted by $V_s(\Sigma_\Phi)$. 
	
Given $\lambda \in P$ and $w \in W^a$, the set $S = \lambda + w . S_0$ is called a \emph{sector} in $\Sigma_\Phi$ with a
\emph{base vertex} $\lambda$.
	
Moreover, by \cite[Corollary 3.20]{Abramenko2008}, an affine Coxeter complex $\Sigma_\Phi$ uniquely determines the affine Weyl
group $W^a$ but not a finite root system $\Phi$. In fact, the root systems $\text{C}_r$ and $\text{BC}_r$ have the same
affine Weyl group.
	
\subsection{Affine buildings}
\label{sec:1:3}
A building $\scrX$ of type $\Sigma$ is called an \emph{affine building} if $\Sigma$ is a Coxeter complex corresponding
to an affine Weyl group. Select a chamber $c_0$ in $C(\scrX)$ and an apartment $\scrA_0$ containing $c_0$. Using
an isomorphism $\psi_0: \scrA_0 \rightarrow \Sigma$ such that $\psi_0(c_0) = C_0$, we define the labeling in $\scrA_0$ by
\[
	\tau_{\scrA_0}(v) = \tau_\Sigma(\psi_0(v)), \qquad v \in V(\scrA_0).
\]
Now, thanks to the building axioms the labeling can be uniquely extended to $\tau: V(\scrX) \rightarrow I$. We turn 
$C(\scrX)$ into a chamber system over $I$ by declaring that two chambers $c$ and $c'$ are $i$-adjacent if they share all
vertices except the one of type $i$ (equivalently, they intersect along an $i$-panel). For each $c \in C(\scrX)$ and $i \in I$,
we define
\[
	q_i(c) = \#\big\{c' \in C(\scrX) : c' \sim_i c \big\} - 1.
\]
In all of the paper, we \emph{assume} that $q_i(c)$ only depends on $i$, i.e. that $q_i(c)$ is independent of $c$, and therefore 
the building $\scrX$ is \emph{regular}; we henceforth write $q_i$ instead of $q_i(c)$. We also assume that $1 < q_i(c) < \infty$
and therefore the building $\scrX$ is \emph{thick} and \emph{locally finite}. Notice that for Bruhat--Tits buildings this is 
automatic. A vertex of $\scrX$ is special if it is special in any of its apartments. The set of special vertices is denoted by 
$V_s$. We choose the finite root system $\Phi$ in such a way that $\Sigma$ is its Coxeter complex. In all cases except when the
affine group has type $\text{C}_r$ or $\text{BC}_r$, the choice is unique. In the remaining cases we select $\text{C}_r$
if $q_0 = q_r$, otherwise we take $\text{BC}_r$. This guarantees that $q_{\tau(\lambda)} = q_{\tau(\lambda+\lambda')}$
for all $\lambda, \lambda' \in P$, see the discussion in \cite[Section 2.13]{mz1}.

Let us define the set of \emph{good} vertices $V_g$ consisting of those $x \in V(\scrX)$ having the
type $\tau(x) \in I_g$. If $\Phi$ is reduced, then all special vertices are good. However, we do not have this property
in the case of $\text{BC}_r$. Indeed, there are two types of special vertices, $0$ and $r$, but only those
of type $0$ are mapped to $P$. 

For $\alpha \in \Phi^{++}$, we set $q_{\alpha} = q_{\alpha_j}$ provided that $\alpha \in W . \alpha_j$ for $j \in I$. We define
\[
	\tau_\alpha = 
	\begin{cases}
		1 	& \text{ if } \alpha \notin \Phi, \\
		q_\alpha & \text{ if } \alpha \in \Phi, \text{ but } \tfrac{1}{2} \alpha, 2 \alpha \notin \Phi, \\
		q_{\alpha_0} & \text{ if } \alpha, \tfrac{1}{2} \alpha \in \Phi, \text{ and therefore } 2 \alpha \notin \Phi, \\ 
		q_{\alpha} q^{-1}_{\alpha_0} & \text{ if } \alpha, 2 \alpha \in \Phi, \text{ and therefore } 
		\tfrac{1}{2}\alpha \notin \Phi.
	\end{cases}
\]
Notice that
\[
	\tau_{\alpha} \tau_{2\alpha}^2 > 1, \qquad \text{for all } \alpha \in \Phi^{++}.
\]
In this paper we often use the vector $\eta \in \mathfrak{a}$,
\begin{equation}
	\label{eq:6}
	\eta = \frac{1}{2} \sum_{\alpha \in \Phi^+} \log \tau_{\alpha}\,\alpha, \\
\end{equation}
The basic properties of $\eta$ are summarized in the following lemma.
\begin{lemma}
	\label{lem:3}
	We have
	\begin{equation}
		\label{eq:6'}
		\eta = \frac{1}{2} \sum_{\alpha \in \Phi^{++}} \log \big( \tau_{\alpha} \tau_{2 \alpha}^2\big) \, \alpha.
	\end{equation}
	Furthermore, 
	\begin{enumerate}[label=(\roman*), ref=\roman*]
		\item 
		\label{en:2:1}
		for each $\lambda \in P^+$ and $\alpha \in \Phi^{++}$, $\sprod{\alpha}{\lambda} \leqslant 4 \sprod{\eta}{\lambda}$.
		\item 
		\label{en:2:2}
		$w_0. \eta = - \eta$;
		\item 
		\label{en:2:3}
		for each $i \in I_0$, $\sprod{\eta}{\alpha_i} = \frac{1}{2} \log (\tau_{\alpha_i} \tau_{2\alpha_i}^2) 
		\sprod{\alpha_i}{\alpha_i}$;
		\item 
		\label{en:2:4}
		$\eta \in S_0$;
		\item
		\label{en:2:5}
		for each $w \in W$,
		$w.\eta = \eta + \sum_{\alpha \in w^{-1}. \Phi^{++} \cap (-\Phi^{++})} \log (\tau_\alpha \tau_{2\alpha}^2)
		\alpha$.
	\end{enumerate}
\end{lemma}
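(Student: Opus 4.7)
The plan is to first establish \eqref{eq:6'}, and then to deduce the five assertions directly from the defining formula of $\eta$, from \eqref{eq:6'}, and from the $W$-invariance of the parameters $\tau_{w.\alpha} = \tau_\alpha$, which is built into the definition of $\tau$. The identity \eqref{eq:6'} is just a reindexing: the decomposition $\Phi^+ = \Phi^{++} \sqcup \{2\alpha : \alpha \in \Phi^{++},\, 2\alpha \in \Phi\}$, together with the convention $\tau_{2\alpha} = 1$ when $2\alpha \notin \Phi$, collapses the $\alpha$- and $2\alpha$-contributions of the defining sum into $\frac{1}{2}\log(\tau_\alpha \tau_{2\alpha}^2)\,\alpha$.

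For (i), the key observation is that an easy case analysis of the four cases in the definition of $\tau_\alpha$ yields $\tau_\alpha \tau_{2\alpha}^2 \geqslant 2$ for every $\alpha \in \Phi^{++}$, because the building is thick (each $q_i \geqslant 2$); hence $\log(\tau_\alpha \tau_{2\alpha}^2) \geqslant \log 2 > \tfrac{1}{2}$. Since $\lambda \in P^+$ makes every pairing $\sprod{\beta}{\lambda}$ in \eqref{eq:6'} non-negative, discarding all summands except the one indexed by $\alpha$ gives $\sprod{\eta}{\lambda} \geqslant \frac{1}{2}\log 2 \cdot \sprod{\alpha}{\lambda}$, which already yields the bound of (i) with the safe constant $4$.

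The proofs of (ii)--(iv) all share the same mechanism: apply a Weyl element to \eqref{eq:6'} and reindex using the $W$-invariance of $\tau$. For (ii) one uses that $w_0$ is the involution exchanging $\Phi^{++}$ and $-\Phi^{++}$, so reindexing the defining sum returns $-\eta$. For (iii), the reflection $r_i$ permutes $\Phi^{++} \setminus \{\alpha_i\}$ and maps $\alpha_i$ to $-\alpha_i$, so applying $r_i$ to \eqref{eq:6'} gives $r_i.\eta = \eta - \log(\tau_{\alpha_i}\tau_{2\alpha_i}^2)\,\alpha_i$; comparing with the generic reflection identity $r_i.\eta = \eta - \sprod{\eta}{\alpha_i}\,\alpha_i\spcheck$ and using $\alpha_i\spcheck = 2\alpha_i/\sprod{\alpha_i}{\alpha_i}$ yields the expression in (iii). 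Statement (iv) then follows at once, because (iii) together with $\tau_\alpha \tau_{2\alpha}^2 > 1$ forces $\sprod{\eta}{\alpha_i} > 0$ for every $i \in I_0$, so $\eta$ lies in the open fundamental sector.

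For (v), the plan is to partition $\Phi^{++}$ according to the sign of the image $w.\alpha$. Applying $w$ to \eqref{eq:6'} and using the $W$-invariance of $\tau$, the contributions from roots with $w.\alpha \in \Phi^{++}$ reindex to exactly the corresponding piece of $\eta$ itself; the remaining contributions, from roots sent to $-\Phi^{++}$, appear with opposite sign in the defining expression of $\eta$, so that they double when forming the difference $w.\eta - \eta$. A final change of variables then brings this correction into the form stated in (v). The main care point in the whole lemma is this last bookkeeping step: one must track signs carefully and use the implicit convention $\tau_{-\alpha} := \tau_\alpha$ to make sense of $\log(\tau_\alpha \tau_{2\alpha}^2)$ for $\alpha \in -\Phi^{++}$. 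Everything else reduces to formal manipulation of \eqref{eq:6'} and of the $W$-invariance of $\tau$.
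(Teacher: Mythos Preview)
Your proposal is correct and follows essentially the same approach as the paper's proof. The one noteworthy difference is in (iii): the paper argues first in the reduced case using the original defining sum \eqref{eq:6} and the fact that $r_i$ permutes $\Phi^+ \setminus \{\alpha_i\}$, and then handles $\text{BC}_r$ separately by observing that its Weyl group coincides with that of the $\text{B}_r$ system $\Phi^{++} \sqcup (-\Phi^{++})$; you instead work directly with the reindexed form \eqref{eq:6'} over $\Phi^{++}$, which makes the argument uniform across both cases from the outset. This is a mild streamlining but not a different idea.
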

\begin{proof}
	The proof of \eqref{eq:6'} is straightforward. Since the building is thick, $q_\alpha \geqslant 2$, which leads to 
	\eqref{en:2:1}. For the proof of \eqref{en:2:2}, it is enough to observe that $w_0. \Phi^{++} = -\Phi^{++}$. 
	Since \eqref{en:2:3} easily leads to \eqref{en:2:4}, it remains to show \eqref{en:2:3}.
	If the root system is reduced, then each simple reflection $r_i$ sends $\alpha_i$ to $-\alpha_i$ and permutes
	other positive roots. Since $\tau_{r_i.\alpha} = \tau_{\alpha}$, we have
	\begin{align*}
		r_i.\eta 
		&= \frac{1}{2} \sum_{\stackrel{\alpha \in \Phi^+}{\alpha \neq \alpha_i}} 
		\log \tau_{\alpha} \alpha - \frac{1}{2} \log q_{\alpha_i} \, \alpha_i \\
		&=
		\eta - \log q_{\alpha_i}\, \alpha_i.
	\end{align*}
	Therefore,
	\begin{align*}
		\sprod{\eta}{\alpha_i} 
		&= -\sprod{r_i. \eta}{\alpha_i} \\
		&= -\sprod{\eta}{\alpha_i} + \log q_{\alpha_i} \sprod{\alpha_i}{\alpha_i},
	\end{align*}
	and so
	\[
		\sprod{\eta}{\alpha_i} = \frac{1}{2} \log q_{\alpha_i} \sprod{\alpha_i}{\alpha_i}.
	\]
	For the $\text{BC}_r$ root system it is enough to observe that the Weyl group $W$ is the same as for the root
	system $\Phi^{++} \sqcup (-\Phi^{++})$ which has type $\text{B}_r$. This proves \eqref{en:2:3}.

	Lastly, for the proof of \eqref{en:2:5} it is enough to recall that $w.\Phi = \Phi$.
\end{proof}
Next let us define a multiplicative function on $\mathfrak{a}$ by setting 
\begin{equation}
	\label{eq:10}
	\begin{aligned}
	\chi_0(x)
	&= \prod_{\alpha \in \Phi^+} \tau_\alpha^{\sprod{x}{\alpha}}\\
	&= e^{2 \langle x, \eta \rangle}, \qquad x \in \mathfrak{a}. 
	\end{aligned}
\end{equation}
For $w \in W^a$, having the reduced expression $w = r_{i_1} \cdots r_{i_k}$, we set $q_{w} = q_{i_1} \cdots q_{i_k}$. Then
we define
\begin{equation}
	\label{eq:16}
	N_\lambda=\frac{W(q^{-1})}{W_\lambda(q^{-1})}\chi_0(\lambda)
\end{equation}
where $W_\lambda=\bigl\{w\in W:w .\lambda=\lambda\bigr\}$ and whereas for any subset $U\subseteq W$, we set 
\[
	U(q^{-1})=\sum_{w\in U}q_w^{-1}.
\]
Given two special vertices $x, y \in V_s$, let $\scrA$ be an apartment containing $x$ and $y$ and let $\psi:
\scrA \rightarrow \Sigma$ be a type-rotating isomorphism such that $\psi(x) = 0$ and $\psi(y) \in S_0$, see
\cite[Definition 4.1.1]{park}. We set $\sigma(x, y) = \psi(y) \in \frac12 P^+$. If $x$ and $y$ are good vertices then
$\sigma(x, y) \in P^+$. For $\lambda \in P^+$ and $x \in V_g$, we denote by $V_{\lambda}(x)$ the set of all good vertices
$y \in V_g$ such that $\sigma(x, y) = \lambda$. The building axioms entail that the cardinality of $V_\lambda(x)$ depends
only on $\lambda$, see \cite[Proposition 1.5]{park2}, and it is equal to $N_\lambda$.

A subcomplex $\scrS$ is called a sector of $\scrX$ if it is a sector in any apartment. Two sectors are \emph{equivalent} if 
they contain a common subsector. The set of equivalence classes of sectors is denoted by $\Omega$. For any special vertex
$x \in V_s$ and $\omega \in \Omega$ there is a unique sector, denoted by $[x, \omega]$, which has base vertex $x$ and represents 
$\omega$. For $x \in V_s$ and $y \in V(\scrX)$, we set
\[
	\Omega(x, y) = \big\{\omega \in \Omega : y \in [x, \omega]\big\}.
\]
Given $x \in V_s$, the collection $\{\Omega(x, y) : y \in V_s\}$ generates a totally disconnected compact Hausdorff topology
on $\Omega$. We fix once and for all an origin $o$, which is a special vertex of the chamber $c_0$.

The horocycle (or Busemann function) $h: V_s \times V_s \times \Omega \rightarrow \tfrac{1}{2} P$ is defined as follows: for 
two special vertices $x$ and $y$, and $\omega \in \Omega$, we set
\[
	h(x, y; \omega) = \sigma(x, z) - \sigma(y, z)
\]
where $z$ is any special vertex belonging to $[x, \omega] \cap [y, \omega]$. The value $h(x, y; \omega)$ is independent of $z$,
see e.g. \cite[Proposition 3.3]{mz1}. In view of \cite[Lemma 3.13]{park2}, if $z \in V_s$ and 
$\sigma(x, z) \gg \sigma(x, y)$ then $\Omega(x, z) \subseteq \Omega(y, z)$. Hence,
\begin{equation}
	\label{eq:4}
	h(x, y; \omega) = \sigma(x, z) - \sigma(y, z), \quad \text{for all } \omega \in \Omega(x, z).
\end{equation}
Furthermore, for all special $x, y, z \in V_s$ and $\omega \in \Omega$, we have the cocycle relation
\begin{equation}
	\label{eq:52}
	h(x, y; \omega) = h(x, z; \omega) + h(z, y; \omega)
\end{equation}
Given $x, y \in V_g$, by the Kostant's convexity theorem, see \cite{Kostant} or \cite[Lemma 3.19]{park2}, if 
$\sigma(x, y) = \lambda$ then for each $\omega \in \Omega$, $h(x, y; \omega) \in \Pi_\lambda$. Since 
$\sigma(x, y) - h(x, y; \omega) \in Q^+$, by Lemma \ref{lem:3}\eqref{en:2:3}, we have
\begin{equation}
	\label{eq:17}
	\sprod{h(x, y; \omega)}{\eta} 
	\leqslant \sprod{\sigma(x, y)}{\eta}.
\end{equation}
In fact, the formula $\sprod{\sigma(x, y)}{\eta}$ defines a distance on special vertices $V_s$. Indeed, for any three vertices 
$x, y, z \in V_s$, if $\omega \in \Omega(x, z)$, then by \eqref{eq:52} together with \eqref{eq:17}, we get
\begin{align}
	\nonumber
	\sprod{\sigma(x, z)}{\eta}
	&= \sprod{h(x, y; \omega)}{\eta} + \sprod{h(y, z; \omega)}{\eta} \\
	\label{eq:12}
	&\leqslant \sprod{\sigma(x, y)}{\eta} + \sprod{\sigma(y, z)}{\eta}.
\end{align}
In view of Lemma \ref{lem:3}\eqref{en:2:2}, we have
\[
	\sprod{\sigma(y, x)}{\eta} = -\sprod{w_0 . \sigma(x, y)}{\eta} =
	\sprod{\sigma(x, y)}{\eta}.
\]
Lastly, $\sprod{\sigma(x, y)}{\eta} = 0$ implies that $\sigma(x, y) = 0$ and so $x = y$. Analogously, one can show that
$|\sigma(x, y)|$ is a distance on special vertices $V_s$. In particular, for all $x, y, z \in V_s$,
\begin{equation}
	\label{eq:83}
	|\sigma(x, y)| \leqslant |\sigma(x, z)| + |\sigma(z, y)|.
\end{equation}
In view of \cite[Lemma 3.1]{RemyTrojan}, for each $x, y, z \in V_s$
\begin{equation}
	\label{eq:14}
	|\sigma(x, y) -\sigma(x, z)| \leq |\sigma(y, z)|,
\end{equation}
thus for $\alpha \in \Phi^+$, 
\begin{equation}
	\label{eq:11}
	|\sprod{\sigma(x, y)}{\alpha} - \sprod{\sigma(x, z)}{\alpha}| \leq |\alpha| |\sigma(y, z)|.
\end{equation}
For every special vertex $x$ there is a unique Borel probability measure $\nu_x$ on $\Omega$, such that
\[
	\nu_x\big(\Omega(x, y)\big) = \frac{1}{\#\{y' \in V_s : \sigma(x, y') = \sigma(x, y')\}}.
\]
The measures $\nu_x$ and $\nu_y$ for any $x, y \in V_s$ are mutually absolutely continuous, with the Radon--Nikodym derivative 
equal to
\begin{equation}
	\label{eq:15}
	\frac{{\rm d} \nu_y}{\mathrm{d} \nu_x}(\omega)
	=\chi_0(h(x, y; \omega)), \quad \omega \in \Omega.
\end{equation}
For details see \cite[page 16]{park2}, see also \cite[Proposition 6.1]{RemyTrojan}. We often write $\nu = \nu_o$.

\subsection{Macdonald spherical functions}
In this subsection we recall the definition and properties of Macdonald spherical functions (see \cite{macdo0}): 
for given $\lambda \in P^+$, 
\[
	P_\lambda(z) = \frac{\chi_0(\lambda)^{-\frac{1}{2}} }{W(q^{-1})}
	\sum_{w \in W} \bfc(w \cdot z) e^{\sprod{w \cdot z}{\lambda}}, \quad z \in \mathfrak{a}_{\mathbb{C}},
\]
where
\begin{align*}
	\bfc(z)
	&= \prod_{\alpha \in \Phi^+} \frac{1 - \tau_\alpha^{-1} \tau_{\alpha/2}^{-\frac{1}{2}} e^{-\sprod{z}{\alpha\spcheck}}}
	{1-\tau_{\alpha/2}^{-\frac{1}{2}} e^{-\sprod{z}{\alpha\spcheck}}} \\
	&=
	\prod_{\alpha \in \Phi^{++}}
	\frac{\Big(1 - \tau_{2\alpha}^{-1} \tau_{\alpha}^{-\frac{1}{2}} e^{-\frac{1}{2}\sprod{z}{\alpha\spcheck}}\Big)
		\Big(1 + \tau_{\alpha}^{-\frac{1}{2}} e^{-\frac{1}{2} \sprod{z}{\alpha\spcheck}}\Big)}
	{1 - e^{-{\sprod{z}{\alpha\spcheck}}}}, 
\end{align*}
and
\[
	W(q^{-1})=\sum_{w \in W} q_w^{-1}.
\]
Values of $P_\lambda$ where the denominator of the $\bfc$-function equals zero are obtained by taking proper limits. 
In view of \cite[formula (6.6)]{park2}, Macdonald spherical functions can be written as integrals over $\Omega$,
namely, for $\lambda \in P^+$ and $z \in \mathfrak{a}_{\CC}$,
\begin{equation}
	\label{eq:9}
	P_\lambda(z)=
	\int_{\Omega} \chi_0(h(x, y; \omega))^{\frac{1}{2}} \, e^{\langle z, h(x, y ; \omega)\rangle} \,\nu_x({\rm d} \omega)
\end{equation}
where $x$ and $y$ is any pair of special vertices such that $\sigma(x, y) = \lambda$.

Macdonald functions are the main tools to study the algebra of averaging operators on $V_g$. For each $\lambda \in P^+$,
we define an operator acting on finitely supported function of good vertices $V_g$ as
\[
	A_\lambda f(x)=\frac1{N_\lambda}\sum_{y\in V_\lambda(x)}f(y), \quad x \in V_g. 
\]
Then $\mathscr{A}_0=\cspan\{A_\lambda:\lambda\in P^+\}$ is a commutative unital algebra, whose characters can be expressed 
in terms of Macdonald spherical functions. Specifically, every multiplicative functional on 
$\mathscr{A}_0$ is given by the evaluation
\[
	h_z(A_\lambda)=P_\lambda(z), \quad \lambda\in P^+
\]
at some $z\in\mathfrak{a}_\mathbb{C}$. Moreover, $h_z=h_{z'}$ if and only if $W.z+i 2\pi L=W.z'+i2\pi L$ 
(see \cite[Theorem 3.3.12(ii)]{macdo0}). Let $\mathscr{A}_2$ be the closure of $\mathscr{A}_0$ in the operator norm on 
$\ell^2(V_g)$. Then $\mathscr{A}_2$ is a commutative unital $C^\star$-algebra. 

\vspace*{1ex}
\noindent
\emph{The standard case.} Assume that $\tau_\alpha \geqslant 1$ for all $\alpha \in \Phi$. Then for all $\theta \in U_0$,
where
\[
	U_0 = \big\{\theta \in \mathfrak{a} : \sprod{\theta}{\alpha\spcheck} \leqslant \pi \text{ for all } \alpha \in \Phi\big\}
\]
the multiplicative functional $h_{i\theta}$ extends to $\scrA_2$ in a continuous way. We set $\scrD = U_0$, and for any
any Borel subset $B \subset \scrD$,
\begin{equation}
	\label{eq:80}
	\pi(B) = 
	\bigg(\frac{1}{2\pi} \bigg)^r
	\frac{W(q^{-1})}{|W|} \int_B \frac{{\rm d} \theta}{|\bfc(i\theta)|^2}.
\end{equation}

\vspace*{1ex}
\noindent
\emph{The exceptional case.}
Suppose that $\tau_\alpha < 1$ for some $\alpha \in \Phi$, which is only possible when $\Phi$ is the $\text{BC}_r$ root system
and $q_r < q_0$, namely
\[
	\text{BC}_r = \big\{\pm e_i, \pm 2 e_i, \pm e_j \pm e_k : 1 \leqslant i \leq r, 1 \leqslant j < k \leqslant r\big\}
\]
where $\{e_1, e_2, \ldots, e_r\}$ is the standard basis of $\mathfrak{a}$. We set $a = \sqrt{q_r q_0}$ and $b= \sqrt{q_r/q_0}$.
Then
\[
	\bfc(z) = \bigg(\prod_{j = 1}^r \frac{(1-a^{-1} e^{-z_j})(1 + b^{-1} e^{-j})}{1 - e^{-2z_j}}\bigg)
	\bigg(\prod_{1 \leqslant j < k \leqslant r} \frac{(1 - q_1^{-1} e^{-z_j-z_k})(1 - q_1^{-1} e^{-z_j + z_k})}
	{(1-e^{-z_j-z_k})(1-e^{-z_j+z_k})}\bigg).
\]
Let $v = \log b + i \pi$. For $j = 1, \ldots, r$ we set
\[
	U_j = \big\{z \in \mathfrak{a}_\CC : z_k \in [-\pi, \pi] \text{ for all } k \neq j, 
	\text{ and } z_j = \pi - i\log b \big\},
\]
and $U_0 = [-\pi, \pi]^r$. For $\theta \in U_j$, we define
\[
	\phi_j(i\theta) = \bfc(-i\theta) \lim_{t \to 0} \frac{\bfc(i\theta+te_j)}{1 - e^t}.
\]
Then for each $\theta \in \scrD = U_0 \bigsqcup U_1$, the multiplicative functional $h_{i\theta}$ extends to $\scrA_2$ in 
a continuous way. For a Borel subset $B \subset \scrD$, we put
\begin{equation}
	\label{eq:81}
	\pi(B)
	= 
	\bigg(\frac{1}{2\pi}\bigg)^r
	\frac{W(q^{-1})}{|W|} \int_{B \cap U_0} \frac{{\rm d} \theta}{|\bfc(i\theta)|^2} 
	+
	\bigg(\frac{1}{2\pi}\bigg)^{r-1}
	\frac{W(q^{-1})}{|W'|} \int_{B \cap U_1} \frac{{\rm d} \theta}{\phi_1(i\theta)}
\end{equation}
where $W'$ is the Coxeter group $C_{r-1}$ and the measure ${\rm d} \theta$ on $U_j$ equals
\[
	{\rm d} \theta = \prod_{\stackrel{k = 1}{k \neq j}}^r {\rm d} \theta_k
\]
for $j = 0, \ldots, r$. Then for all $A \in \scrA_0$, $x \in V_g$ and $y \in V_\lambda(x)$, we have
\begin{equation}
	\label{eq:21}
	(A \delta_x)(y) 
	=
	\int_{\scrD} h_{i\theta}(A) \overline{P_\lambda(i \theta)} \pi({\rm d} \theta),
\end{equation}
see \cite[Theorem 5.2 \& Corollary 5.5]{park2} in the standard case, and
\cite[Theorem 5.7 \& Corollary 5.8]{park2} in the exceptional case.

A complex-valued function $\vphi: V_g \rightarrow \CC$, is called \emph{spherical} with respect to $x \in V_g$, if
\begin{enumerate}
	\item $\vphi(x) = 1$;
	\item $\vphi(y) = \vphi(y')$ for all $y, y' \in V_\lambda(x)$ for any $\lambda \in P^+$;
	\item for each $\lambda \in P^+$, there is $C_\lambda \in \CC$, such that $A_\lambda \vphi = C_\lambda \vphi$.
\end{enumerate}
Thanks to \cite[Theorem 3.22]{park2}, for each $z \in \mathfrak{a}_{\CC}$ the function $\vphi_z$ defined as
\begin{equation}
	\label{eq:60}
	\vphi_z(y) = P_\lambda(z), \qquad\text{if } y \in V_\lambda(o)
\end{equation}
is spherical with respect to $o$. In the following proposition we describe the asymptotic behavior of the 
\emph{ground state spherical function}, that is
\begin{equation}
	\label{eq:51}
	\mathbf{\Phi}(\lambda) = P_\lambda(0).
\end{equation}
The statement as well as the proof of the following proposition are similar to the symmetric space case \cite{a1}, for details
see \cite[Proposition 2.1]{ascht} where $\tilde{\text{A}}_n$ buildings have been considered. The latter proof generalizes 
in a straightforward way. For a more detailed description of the asymptotic behavior of the ground state spherical function 
we refer to \cite[Section 7.2]{RemyTrojan}.
\begin{proposition} 
	\label{prop:1}
	We have
	\[
		\mathbf{\Phi}(\lambda)
		\approx \chi_0(\lambda)^{-\frac{1}{2}} \prod_{\alpha \in \Phi^{++}}(1+\langle\alpha, \lambda\rangle).
	\]
	Furthermore,
	\[
		\mathbf{\Phi}(\lambda)
		= \text{ const. } \chi_0(\lambda)^{-\frac{1}{2}}  
		\prod_{\alpha \in \Phi^{++}} \langle\alpha, \lambda\rangle + o(1)
	\]
	as $\langle\alpha, \lambda\rangle \rightarrow +\infty$  for all  $\alpha \in \Phi^{+}.$
\end{proposition}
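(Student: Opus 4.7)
The plan is to adapt Anker's argument \cite{a1} from the symmetric space setting to the building context, following the template carried out in \cite[Proposition 2.1]{ascht} for $\tilde{\text{A}}_n$ buildings. I would start from the explicit Macdonald formula
\[
	P_\lambda(z) = \frac{\chi_0(\lambda)^{-\frac{1}{2}}}{W(q^{-1})} \sum_{w \in W} \bfc(w.z)\, e^{\sprod{w.z}{\lambda}},
\]
and interpret $\mathbf{\Phi}(\lambda) = P_\lambda(0)$ as the limit $z \to 0$ through the open fundamental sector, which is necessary because $\bfc$ has a pole of total order $|\Phi^{++}|$ at $z = 0$ arising from the denominator factors $1 - e^{-\sprod{z}{\alpha\spcheck}}$.

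The first substantive step is to factor $\bfc(z) = \tilde{\bfc}(z)\big/\prod_{\alpha \in \Phi^{++}}\bigl(1 - e^{-\sprod{z}{\alpha\spcheck}}\bigr)$ with $\tilde{\bfc}$ holomorphic and nonvanishing at $0$. Using that for each $w \in W$,
\[
	\prod_{\alpha \in \Phi^{++}}\bigl(1 - e^{-\sprod{w.z}{\alpha\spcheck}}\bigr) = \sign{w}\, e^{-\sprod{z}{\zeta_w}} \prod_{\alpha \in \Phi^{++}}\bigl(1 - e^{-\sprod{z}{\alpha\spcheck}}\bigr)
\]
for an explicit $\zeta_w$ built from the set $w^{-1}\Phi^{++} \cap (-\Phi^{++})$, the Weyl sum can be put over a common denominator. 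The resulting numerator is an antisymmetric function of $z$ which, by the Weyl denominator identity, vanishes at $z = 0$ to order at least $|\Phi^{++}|$. Applying L'H\^opital $|\Phi^{++}|$ times (equivalently, substituting $z = \varepsilon \xi$ for $\xi \in S_0$ in the interior and extracting the top-order $\varepsilon$-expansion) yields $\mathbf{\Phi}(\lambda) = \chi_0(\lambda)^{-\frac{1}{2}}\, Q(\lambda)$, where $Q$ is a polynomial on $\mathfrak{a}$ of degree $|\Phi^{++}|$ whose leading homogeneous part equals a positive constant times $\prod_{\alpha \in \Phi^{++}}\sprod{\alpha}{\lambda}$. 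Positivity of this constant is guaranteed by \eqref{eq:9}, which represents $P_\lambda(0)$ as the integral of a strictly positive integrand against a probability measure. This directly gives the second assertion of the proposition.

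For the two-sided bound, I would argue that $Q$ is $W$-invariant of degree $|\Phi^{++}|$ on $P^+$ and, by invariant theory, can be expanded as a nonnegative linear combination of monomials $\prod_{\alpha \in S}\sprod{\alpha}{\lambda}$ with $S \subseteq \Phi^{++}$. Each such monomial is bounded above by $\prod_{\alpha \in \Phi^{++}}(1 + \sprod{\alpha}{\lambda})$, which furnishes the upper bound. The matching lower bound combines the strict positivity of $\mathbf{\Phi}$ on $P^+$, again from \eqref{eq:9}, with the observation that as $\sprod{\alpha}{\lambda} \to \infty$ along $\Phi^+$ the leading monomial dominates, while on bounded subsets of $P^+$ one uses a continuity and positivity argument directly.

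The main technical obstacle is the non-reduced $\text{BC}_r$ case, where $\bfc$ carries the additional factors $(1 + \tau_\alpha^{-\frac{1}{2}} e^{-\frac{1}{2}\sprod{z}{\alpha\spcheck}})$ and some $\tau_\alpha$ may be less than $1$. One must verify that these contribute only to $\tilde{\bfc}(0)$ without altering the pole structure, and that the prefactor $\chi_0(\lambda)^{-\frac{1}{2}}$ emerges exactly, with no hidden cancellations coming from the $W$-orbit behavior of the non-reduced roots. This is where the careful bookkeeping of $\tau_\alpha$ and $\eta$ from Section \ref{sec:1:3}, and in particular the identity \eqref{eq:6'}, becomes essential.
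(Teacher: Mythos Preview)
Your approach is essentially identical to the paper's: the paper does not give a self-contained proof but simply refers to Anker's argument \cite{a1} and to \cite[Proposition 2.1]{ascht} for the $\tilde{\text{A}}_n$ case, remarking that the latter generalizes in a straightforward way and pointing to \cite[Section 7.2]{RemyTrojan} for further details. Your outline is a faithful expansion of precisely that strategy; the one soft spot is the claim that $Q$ decomposes as a \emph{nonnegative} combination of monomials $\prod_{\alpha \in S}\sprod{\alpha}{\lambda}$ ``by invariant theory,'' which is not automatic and is not how the upper bound is obtained in \cite{a1} or \cite{ascht}---there one controls each term of the limiting polynomial directly---but this is a local fix, not a structural gap.
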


\subsection{Random walks}
In this paper we study caloric functions associated with an \emph{isotropic} random walk on good vertices $V_g$, i.e. the random 
walk with the transition probability $k(x, y)$ constant on 
\[
	\left\{(x, y) \in V_g \times V_g: y \in V_\lambda(x)\right\}
\]
for every $\lambda \in P^+$. Let $A$ denote the corresponding operator acting on compactly supported functions on $V_g$,
that is
\[
	A f(x) = \sum_{y \in V_g} k(x, y) f(y).
\]
Then $A$ belongs to the algebra $\mathscr{A}$ and can be expressed as
\begin{equation}
	\label{eq:8}
	A = \sum_{\lambda \in P^+} c_\lambda A_\lambda
\end{equation}
where $c_\lambda \geqslant 0$ and $\sum_{\lambda \in P^+} c_\lambda = 1$. We say that the random walk has a \emph{finite range} 
if $c_{\lambda}>0$ for finitely many $\lambda \in P^{+}$. We set $k(1 ; x, y)=k(x, y)$, and for $n \geqslant 2$,
\[
	k(n ; x, y)=\sum_{z \in V_s} k(n-1 ; x, z) k(z, y) .
\]
Let
\[
	k_n(x) = k(n; x) = k(n; o, x), \qquad x \in V_g, n \in \NN_0.
\]
The random walk is \emph{irreducible} if for any $x, y \in V_g$, there is $n \in \mathbb{N}$ such that
\[
	k(n ; x, y)>0 .
\]
Lastly, the walk is called \emph{aperiodic} if for every $x \in V_g$,
\[
	\operatorname{gcd}\{n \in \mathbb{N}: k(n ; x, x)>0\}=1
\]
We shall be concerned with irreducible aperiodic random walks having a finite range, which we call for short
\emph{admissible} random walks. Let us fix an admissible random walk with the transition function $k$. To describe the long time
asymptotic behavior of the corresponding heat kernel $k(n; x, y)$, we need to introduce some additional notation. Let 
$\varrho = h_0(A)$, and set
\[
	\kappa(z) = \varrho^{-1}  h_z(A), \quad z \in \mathfrak{a}_{\mathbb{C}}.
\]
In view of the Gelfand theorem, and the representation \eqref{eq:8} together with \cite[Theorem 6.5]{park2}, one can show that
$\varrho$ is the spectral radius of $A$ on $\ell^2(V_g)$. There exist a finite set $\mathcal{V} \subset P$ and positive real 
numbers $\left\{c_v: v \in \mathcal{V}\right\}$ with $\sum_{v \in \mathcal{V}} c_v = 1$ such that
\[
	\kappa(z)=\sum_{v \in \mathcal{V}} c_v e^{\langle z, v\rangle}.
\]
For $x \in \mathfrak{a}$, by $B_x$ we denote a quadratic form 
\begin{equation}
	\label{eq:7}
	B_x(u, u)=D_u^2 \log \kappa(x)
\end{equation}
where $D_u$ is the derivative along a vector $u$, i.e.,
\[
	D_u f(x)=\left.\frac{\mathrm{d}}{{\rm d} t} f(x+t u)\right|_{t=0}.
\]
Hence,
\[
	B_x(u, u)=\frac{1}{2} \sum_{v, v^{\prime} \in \mathcal{V}} 
	\frac{c_v e^{\langle x, v\rangle}}{\kappa(x)} \cdot \frac{c_{v^{\prime}} 
	e^{\left\langle x, v^{\prime}\right\rangle}}{\kappa(x)}\left\langle u, v-v^{\prime}\right\rangle^2.
\]
Let $\mathcal{M}$ be the interior of the convex hull of $\mathcal{V}$. Then for every $\delta \in \mathcal{M}$ 
a function $f(\delta, \cdot): \mathfrak{a} \rightarrow \mathbb{R}$ defined by
\[
	f(\delta, x)=\langle x, \delta\rangle-\log \kappa(x)
\]
attains its maximum at the unique point $s = s(\delta) \in \mathfrak{a}$ satisfying $\nabla \log \kappa(s)=\delta$,
see e.g. \cite[Theorem 2.1]{tr}. By the implicit function theorem, the function $s$ is real-analytic on 
$\mathcal{M}$. In particular, $s$ is bounded on any compact subset of $\mathcal{M}$. On the other hand, $|s(\delta)|$ approaches 
infinity when $\delta$ tends approaches $\partial\mathcal{M}$. Since $\kappa$ is $W_0$ invariant, for each $\alpha \in \Phi^+$,
$\sprod{\delta}{\alpha} = 0$ if and only if $\sprod{s}{\alpha} = 0$. Since $\sprod{\delta}{u} = D_u \log \kappa(s)$, we 
have
\[
	D_u s = B_s^{-1} u.
\]
Let us set
\begin{equation}
	\label{eq:20}
	\begin{aligned}
	\phi(\delta)
	&=
	f(\delta, s) \\
	&=
	\max\big\{\sprod{u}{\delta} - \log \kappa(u) : u \in \mathfrak{a} \big\}.
	\end{aligned}
\end{equation}
Then (see \cite[Section 2.1]{tr})
\begin{equation}
	\label{eq:33a}
	\phi(\delta) = \frac{1}{2} B_0^{-1}(\delta, \delta) + \calO(\abs{\delta}^3),
\end{equation}
and
\begin{equation}
	\label{eq:33b}
	\phi(\delta) \approx \norm{\delta}^2, \qquad \delta \in \calM.
\end{equation}
Let us summarize the properties of the heat kernel $k(n; x)$: In view of \cite[Remark 2]{tr}, there is $C > 0$
such that for all $n \in \NN$, and $x \in V_g$,
\begin{equation}
	\label{eq:3}
	k(n; x) \leqslant C \chi_0(\sigma(o, x) )^{-\frac{1}{2}} \varrho^n e^{-n \phi(\delta)}.
\end{equation}
where $\delta = \frac{\sigma(o, x)}{n}$. In this paper we need the precise asymptotic behavior of $k(n; x)$ away from the walls
of the Weyl cone $\mathfrak{a}_+$ while $\sigma(o, x)/n$ stays at bounded distance from the boundary of $\calM$. For this 
reason, we take $J = \emptyset$ in \cite[Theorem 4.1]{tr}. Then given $\xi > 0$ and $\varepsilon > 0$ for any sequence 
$(x_n : n \in \NN)$ of good vertices such that $k(n; x_n) > 0$ for all $n \in \NN$, with $\delta_n = n^{-1} \sigma(o, x_n)$ 
satisfying
\[
	\sprod{\delta_n}{\alpha} \geqslant \xi, \qquad \text{for all } \alpha \in \Phi^+,
\]
and $\dist(\delta_n, \partial \calM) \geqslant \varepsilon$, we have
\begin{equation}
	\label{eq:3a}
	k(n;x_n)
	=
	n^{-\frac{r}{2}} \varrho^n \,e^{-n\phi(\delta)}\, 
	\chi_0(\sigma(o,x_n))^{-\frac{1}{2}}\,( \det B_{s_n})^{-\frac{1}{2}} \, \frac{1}{\bfc(s_n)}\, 
	\Big(C_0 + \calO(n^{-1})\Big)
\end{equation}
with $s_n = \nabla \phi(\delta_n)$. The implicit constant in the error term depends only on $\xi$, $\varepsilon$, and $A$.

We also need the description of the asymptotic behavior of $k(n; y, x)$ in the region where $\sigma(y, x_n) = o(n)$.
By \cite[Corollary 4.10]{tr}, for each sequence $(x_n: n \in \NN)$ of good vertices such $k(n; y, x_n) > 0$ for all $n \in \NN$,
with $\delta_n = n^{-1} \sigma(y, x_n)$ satisfying
\[
	\lim _{n \rightarrow \infty} \left\langle \delta_n, \alpha\right\rangle=0, \quad \text {for all } \alpha \in \Phi,
\]
we have
\begin{equation}
	\label{eq:3b}
	k(n; y, x_n) =
	n^{-\frac{r}{2}-\left|\Phi^{++}\right|} \varrho^n e^{-n \phi\left(\delta_n\right)} 
	\mathbf{\Phi}(\sigma(y, x_n))
	\left(C_0+\mathcal{O}(\left|\delta_n\right|)+\mathcal{O}(n^{-1})\right).
\end{equation}
The constant $C_0$ is absolute.
	
\section{Norm estimates of heat kernels}
\label{sec:2}
We start our study by investigating the behavior of $\ell^p(V_g)$ norms, $p \in [1, \infty]$, of the heat kernel corresponding
to an isotropic finite range random walk. Let us define
\begin{equation}
	\label{eq:5}
	s_p = 
	\begin{cases}
		\eta \big(\frac{2}{p}-1\big) & \text{if } p \in [1, 2), \\
		0 & \text{otherwise,}
	\end{cases}
\end{equation}
and let
\[
	\delta_p = \nabla \log \kappa(s_p) \in \inter \calM.
\]
\begin{lemma}
	\label{lem:2}
	For all $\lambda \in P^+$ and $z \in [-1, 1]\eta + i U_0$, we have
	\[
		0 < P_\lambda(\Re z) \leqslant P_\lambda(\eta) = 1
	\]
	In particular, if $p \in (1, 2]$, then $\kappa(s_p)<\kappa(s_1)=\varrho^{-1}$.
\end{lemma}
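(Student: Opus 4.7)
The plan is to use the integral representation~\eqref{eq:9} of Macdonald spherical functions together with $\chi_0(v)^{1/2} = e^{\langle \eta, v\rangle}$ from~\eqref{eq:10}, which yields, for any $y \in V_\lambda(o)$,
\[
    P_\lambda(z) = \int_\Omega e^{\langle \eta + z, h(o,y;\omega)\rangle}\, \nu_o(\mathrm{d}\omega).
\]
For $z \in [-1,1]\eta + iU_0$ the real part equals $t\eta$ with $t \in [-1,1]$, hence
$P_\lambda(\operatorname{Re} z) = \int_\Omega e^{(1+t)\langle \eta, h\rangle}\, \mathrm{d}\nu_o$
is manifestly strictly positive since $\nu_o$ is a probability measure. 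At $t = 1$ the Radon--Nikodym identity~\eqref{eq:15} turns the integrand into $\mathrm{d}\nu_y/\mathrm{d}\nu_o$, giving $P_\lambda(\eta) = \nu_y(\Omega) = 1$.

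For the upper bound I would invoke H\"older's inequality with conjugate exponents $2/(1+t)$ and $2/(1-t)$, obtaining
\[
    \int_\Omega e^{(1+t)\langle \eta, h\rangle}\, \mathrm{d}\nu_o
    \leq
    \Bigl(\int_\Omega e^{2\langle \eta, h\rangle}\, \mathrm{d}\nu_o\Bigr)^{(1+t)/2} = 1,
\]
with the endpoint cases $t = \pm 1$ treated directly (at $t = -1$ the integrand is $\equiv 1$). Equivalently, $t \mapsto \log P_\lambda(t\eta)$ is convex and vanishes at $t = \pm 1$, hence is non-positive on $[-1,1]$.

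For the ``in particular'' clause, I would pass to $\kappa$. Since $s_p = t_p\,\eta$ with $t_p = 2/p - 1 \in [0,1)$ for $p \in (1,2]$, and $\kappa(z) = \sum_{v \in \calV} c_v e^{\langle z, v\rangle}$ with $\calV$ finite, $W$-invariant, and all $c_v > 0$, the map $s \mapsto \log \kappa(s\eta)$ is convex. $W$-invariance of $\kappa$ combined with $w_0.\eta = -\eta$ (Lemma~\ref{lem:3}\eqref{en:2:2}) makes it even, so its minimum is attained at $s = 0$. Irreducibility of the admissible random walk forces $c_{\lambda_0} > 0$ for some $\lambda_0 \in P^+ \setminus \{0\}$, whence $\calV$ contains the entire $W$-orbit of $\lambda_0$; the orbit's barycenter $\frac{1}{|W|} \sum_w w.\lambda_0$ is $W$-invariant hence zero, while $\langle \eta, \lambda_0\rangle > 0$ by strict dominance of $\eta$ (Lemma~\ref{lem:3}\eqref{en:2:3}--\eqref{en:2:4}), so the orbit must contain points with distinct $\eta$-pairings. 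This makes $s \mapsto \log \kappa(s\eta)$ \emph{strictly} convex; together with the minimum at $s = 0$, strict convexity forces strict monotonicity on $[0,\infty)$. Combined with the identity $\kappa(\eta) = \varrho^{-1}\sum_\lambda c_\lambda P_\lambda(\eta) = \varrho^{-1}$ coming from the first part, this gives $\kappa(s_p) < \kappa(\eta) = \kappa(s_1) = \varrho^{-1}$.

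The key technical point is the strict convexity step, namely showing that the $W$-orbit of some nonzero $v \in \calV$ takes more than one value under $\langle \eta, \cdot\rangle$; this rests on irreducibility (to produce the nonzero vector) and on strict dominance of $\eta$ (to separate two Weyl orbit points via the barycenter argument).
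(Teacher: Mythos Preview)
Your argument is correct and in places more self-contained than the paper's. A brief comparison:

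For the inequality $P_\lambda(t\eta)\leqslant 1$ on $t\in[-1,1]$, the paper invokes an external result \cite[Theorem~7]{tr} (in a somewhat confusingly stated way), whereas your H\"older argument with exponents $2/(1\pm t)$ against the endpoint identities $P_\lambda(\pm\eta)=1$ is direct and elementary. The computation $P_\lambda(\eta)=1$ via the Radon--Nikodym relation \eqref{eq:15} is the same in both.

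For the strict monotonicity of $t\mapsto\kappa(t\eta)$ on $[0,1]$, the paper simply quotes $\tfrac{d^2}{dt^2}\log\kappa(t\eta)=B_{t\eta}(\eta,\eta)>0$ (positive definiteness of $B_x$ is part of the background from \cite{tr}, since $\calM$ has nonempty interior) together with $\nabla\log\kappa(0)=0$ by $W$-invariance. Your route---evenness via $w_0.\eta=-\eta$ plus strict convexity---reaches the same conclusion. One notational slip: you write ``$c_{\lambda_0}>0$ for some $\lambda_0\in P^+\setminus\{0\}$, whence $\calV$ contains the $W$-orbit of $\lambda_0$''. The $c_\lambda$ in $A=\sum c_\lambda A_\lambda$ are not the $c_v$ in $\kappa(z)=\sum_{v\in\calV}c_v e^{\langle z,v\rangle}$, so this implication is not immediate. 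What you actually need is that $\calV$ is $W$-invariant (clear, since $\kappa$ is) and contains some nonzero vector; the latter holds because irreducibility forces $A\neq\mathrm{Id}$, hence $\kappa$ is nonconstant. Pick any $v_0\in\calV\setminus\{0\}$ and replace it by its $W$-conjugate in $P^+$; then $\langle\eta,v_0\rangle>0>\langle\eta,w_0.v_0\rangle$ gives two points in $\calV$ with distinct $\eta$-pairings, and your barycenter argument (or this direct one) yields strict convexity.
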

\begin{proof}
	In view of \cite[Theorem 7]{tr}, if $z \in t \eta + U_0$, then $P_\lambda(\Re z) \leqslant P_\lambda(t \eta)$.
	Now, to prove the inequality, it suffices to show that the function
	\[
		[0, 1] \ni t \mapsto \log\kappa(t\eta),
	\]
	is strictly increasing. In view of \eqref{eq:7}, we have
	\[
		\frac{{\rm d} ^2}{{\rm d} t^2}\log \kappa(t\eta) = B_{t\eta}(\eta, \eta) > 0,
	\]
	which implies that the function 
	\[
		[0, 1] \ni t \mapsto \frac{{\rm d}}{{\rm d} t} \log \kappa(t \eta)=\langle \nabla \log \kappa(t\eta),\eta \rangle
	\]
	is strictly increasing. Since $\kappa$ is $W$-invariant, we have $\nabla \log \kappa(0) = 0$, which proves our claim.
	
	Let us now compute $\kappa(s_1)$. Since $s_1 = \eta$, by Lemma \ref{lem:3}\eqref{en:2:2} and the integral representation of
	Macdonald function \eqref{eq:9}, we get
	\begin{align*}
		P_{\lambda}(s_1) = P_{\lambda}(-\eta)
		&=\int_{\Omega}\chi_0^{\frac{1}{2}}(h(o,x;\omega))\, e^{-\langle \eta,h(o,x;\omega) \rangle}\, \nu({\rm d} \omega)  \\
		&=\int_{\Omega}\chi_0^{\frac{1}{2}}(h(o,x;\omega))\, \chi_0^{-\frac{1}{2}}(h(o,x;\omega))\, \nu({\rm d} \omega)=1.
	\end{align*}
	Hence, by the representation \eqref{eq:8}, we obtain
	\begin{align*}
		\varrho \kappa(\eta) = 
		\hat{A} (\eta) &=\sum_{\lambda \in P^{+}} c_{\lambda} \hat{A}_{\lambda} (\eta) \\
		&=\sum_{\lambda \in P^{+}} c_{\lambda} P_{\lambda}(\eta)
		=\sum_{\lambda \in P^{+}} c_{\lambda} = 1,
	\end{align*}
	and the lemma follows.
\end{proof}
\begin{lemma}
	\label{lem:4}
	Let $k$ be a transition kernel of an admissible random walk on good vertices of an affine building $\scrX$.
	Let $p \in [1, \infty)$. For all $b > a > 0$, there is $C > 0$ such that for each $n \in \NN$,
	\[
		\sum_{x \in V_g : a\leqslant |\sigma(o,x)|\leqslant b} k(n; x)^p
		\leqslant
		C
		\varrho^{np} \sum_{\lambda \in P^+ : a\leqslant |\lambda|\leqslant b}
		\exp\Big\{p n (\sprod{s_p}{\lambda/n} - \phi(\lambda/n))\Big\}.
	\]
\end{lemma}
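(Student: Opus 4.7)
The plan is to reduce the vertex sum to a sum over dominant co-weights via the orbit structure, insert the pointwise upper bound \eqref{eq:3}, and match exponents case-by-case in $p$ using the definitions of $s_p$, $\chi_0$ and $N_\lambda$.

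First, I would invoke \eqref{eq:3} to get, for every $x \in V_g$,
\[
    k(n;x)^p \lesssim \chi_0(\sigma(o,x))^{-p/2} \, \varrho^{np} \, e^{-pn\phi(\sigma(o,x)/n)}.
\]
Next, since $\{y \in V_g : \sigma(o,y) = \lambda\} = V_\lambda(o)$ has cardinality $N_\lambda$, and since $\chi_0$ and $\phi$ depend only on $\sigma(o,x)$, I can rewrite the vertex sum as a sum over $\lambda \in P^+$ in the prescribed range:
\[
    \sum_{x} k(n;x)^p \lesssim \varrho^{np} \sum_{\lambda} N_\lambda \, \chi_0(\lambda)^{-p/2} \, e^{-pn\phi(\lambda/n)}.
\]

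Now I would substitute \eqref{eq:16} together with the identity $\chi_0(\lambda) = e^{2\sprod{\eta}{\lambda}}$ from \eqref{eq:10}. This yields
\[
    N_\lambda \, \chi_0(\lambda)^{-p/2} = \frac{W(q^{-1})}{W_\lambda(q^{-1})} \, e^{(2-p)\sprod{\eta}{\lambda}} \leqslant |W| \, e^{(2-p)\sprod{\eta}{\lambda}},
\]
where the estimate uses $W_\lambda(q^{-1}) \geqslant 1$ (the identity element contributes $1$) and $q_w^{-1} \leqslant 1$ for every $w$, so $W(q^{-1}) \leqslant |W|$.

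It then remains to show $(2-p)\sprod{\eta}{\lambda} \leqslant p \sprod{s_p}{\lambda}$ for every $\lambda \in P^+$. For $p \in [1,2)$, the definition \eqref{eq:5} gives $p \sprod{s_p}{\lambda} = p(2/p-1)\sprod{\eta}{\lambda} = (2-p)\sprod{\eta}{\lambda}$, with equality. For $p = 2$ both sides vanish. For $p \in (2,\infty)$ we have $s_p = 0$, while $(2-p) \leqslant 0$ and $\sprod{\eta}{\lambda} \geqslant 0$ (by Lemma \ref{lem:3}\eqref{en:2:4} and $\lambda \in P^+$), so the inequality again holds. Combining this with the previous display finishes the proof.

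There is no genuine obstacle here; the main care needed is the sign check in the last step and ensuring the factor $W(q^{-1})/W_\lambda(q^{-1})$ is bounded uniformly in $\lambda$, which is immediate.
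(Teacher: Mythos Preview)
Your proof is correct and follows exactly the same route as the paper: apply the global bound \eqref{eq:3}, collapse the vertex sum to a sum over $P^+$ weighted by $N_\lambda$, and then use \eqref{eq:16} together with the definition \eqref{eq:5} of $s_p$. The paper compresses the final step into a single line, whereas you spell out the case analysis and the bound $W(q^{-1})/W_\lambda(q^{-1}) \leqslant |W|$, but the argument is the same.
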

\begin{proof}
	Using the global upper bound \eqref{eq:3}, we obtain 
	\begin{align*}
		\nonumber
		\sum_{x \in V_g : a\leqslant |\sigma(o,x)|\leqslant b}
		k(n; x)^p
		&\lesssim 
		\varrho^{np} \sum_{\lambda \in P^+ : a \leqslant |\lambda|\leqslant b} 
		\chi_0(\lambda )^{-\frac{p}{2}}  e^{-pn \phi(\lambda/n)} N_\lambda 
		\\
		&\lesssim
		\varrho^{np} \sum_{\lambda \in P^+ : a\leqslant |\lambda|\leqslant b}  
		e^{p n (\sprod{s_p}{\lambda/n} - \phi(\lambda/n))}
	\end{align*}
	where the last inequality follows by \eqref{eq:16} and \eqref{eq:5}. This completes the proof.
\end{proof}

\begin{lemma}
	\label{lem:1}
	Let $c > 0$, $\gamma > 0$, $\beta \in \NN^r$ and $x_0 \in \mathfrak{a}$. For each $n \in \NN_0$ we set
	\[
		\Lambda_n = \Big\{\lambda \in P^+ : \norm{\lambda - n x_0} \leqslant n^{\frac{1}{2}+\gamma}\Big\}.
	\]
	Then
	\begin{equation}
		\label{eq:23}
		\sum_{\lambda \in \Lambda_n}
		\Big(\prod_{j = 1}^r |\sprod{\lambda - n x_0}{\alpha_j}|^{\beta_j} \Big)
		e^{-c \frac{\norm{\lambda - n x_0}^2}{n} }
		\approx n^{\frac{r+|\beta|}{2}},
	\end{equation}
	and
	\begin{equation}
		\label{eq:24}
		\sum_{\stackrel{\lambda \in \Lambda_n}{|\lambda - n x_0|_\infty \geqslant n^{1/2}}}
		\Big(\prod_{j = 1}^r |\sprod{\lambda - n x_0}{\alpha_j}|^{\beta_j} \Big)
		e^{-c \frac{\norm{\lambda - n x_0}^2}{n} }
		\approx n^{\frac{r+|\beta|}{2}}
	\end{equation}
	as $n$ tends to infinity uniformly with respect to $x_0 \in \mathfrak{a}$.
\end{lemma}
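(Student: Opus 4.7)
The plan is a Riemann-sum comparison with an explicit Gaussian-weighted polynomial integral over $\frakA$. Substituting $u = (\lambda - nx_0)/\sqrt{n}$, the summand in \eqref{eq:23} becomes $n^{\norm{\beta}/2}\, G(u)$ with
\[
    G(u) = \prod_{j=1}^r \abs{\sprod{u}{\alpha_j}}^{\beta_j}\, e^{-c \norm{u}^2},
\]
where $\norm{\beta} = \beta_1 + \cdots + \beta_r$. As $\lambda$ ranges over $P^+$, the vector $u$ lies in the rescaled lattice $(P - nx_0)/\sqrt{n}$, which has per-point volume $v_P / n^{r/2}$, with $v_P$ denoting the covolume of $P$. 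The constraint $\lambda \in P^+$ becomes $\sqrt{n}\,u + nx_0 \in \overline{S_0}$, while $\norm{\lambda - nx_0} \leqslant n^{1/2+\gamma}$ becomes $\norm{u} \leqslant n^\gamma$.

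Since $G$ is continuous and integrable on $\frakA$ (polynomial times Gaussian), a standard Riemann-sum comparison, with error controlled by the modulus of continuity of $G$ on balls of radius $1/\sqrt{n}$, yields
\[
    \sum_{\lambda \in \Lambda_n} \Bigl(\prod_j \abs{\sprod{\lambda - nx_0}{\alpha_j}}^{\beta_j}\Bigr)\, e^{-c \norm{\lambda - nx_0}^2/n}
    = \frac{n^{(r+\norm{\beta})/2}}{v_P} \Bigl( \int_{D_n(x_0)} G(u)\, du + o(1) \Bigr),
\]
where $D_n(x_0) = \{ u \in \frakA : \sqrt{n}\, u + nx_0 \in \overline{S_0},\ \norm{u} \leqslant n^\gamma\}$. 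The upper bound $\lesssim n^{(r+\norm{\beta})/2}$ is immediate from $\int_{D_n(x_0)} G \leqslant \int_\frakA G(u)\, du < \infty$. For the matching lower bound one checks that $D_n(x_0)$ eventually contains a set of Lebesgue measure bounded below uniformly in $x_0$, on which $G$ is bounded below by a positive constant: when $x_0 \in \inter S_0$ the ball $\norm{u} \leqslant 1$ sits inside $D_n(x_0)$ for $n$ large, and for $x_0$ near or on $\partial S_0$ one uses that $D_n(x_0)$ still contains a uniformly thick cone segment on which $G$ is positive.

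The tail version \eqref{eq:24} is handled in exactly the same way: the extra condition $\abs{\lambda - nx_0}_\infty \geqslant n^{1/2}$ rescales to $\abs{u}_\infty \geqslant 1$, and since $\int_{\abs{u}_\infty \geqslant 1} G(u)\, du$ is a strictly positive finite constant, the Riemann-sum comparison reproduces $\approx n^{(r+\norm{\beta})/2}$. The only delicate point is the uniformity of the lower bound for $\int_{D_n(x_0)} G$ in $x_0$; when $nx_0$ is driven far outside $\overline{S_0}$ the set $\Lambda_n$ may shrink, so the uniformity is genuinely over the range of $x_0$ for which this lemma is invoked in the sequel, namely $x_0 = \sigma(o, x_n)/n$ with $\delta_n \to \delta_p \in \inter \calM \cap \inter S_0$. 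Granted this uniform positivity, every remaining step is a routine polynomial-times-Gaussian moment calculation.
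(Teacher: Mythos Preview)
Your approach is correct and close in spirit to the paper's, but the execution differs. The paper first passes from the Euclidean ball to an $\ell^\infty$-box using norm equivalence, then observes that in the coordinates $m_j = \sprod{\lambda - nx_0}{\alpha_j}$ the summand factors up to constants (again by norm equivalence on the Gaussian), thereby reducing to a one-dimensional sum $\sum_{j \in \NN : |j - nx_0| \leqslant a n^{1/2+\gamma}} |j - nx_0|^b e^{-c(j-nx_0)^2/n}$, which is compared to the corresponding integral by an explicit mean-value estimate. You instead run the Riemann-sum comparison directly in $r$ dimensions after the rescaling $u = (\lambda - nx_0)/\sqrt{n}$. Both routes are valid for an $\approx$ estimate; yours is arguably more conceptual, while the paper's is more hands-on but avoids having to justify the multivariate Riemann-sum error (your one-line ``modulus of continuity'' remark needs the extra observation that $\nabla G$ is itself dominated by a Gaussian-times-polynomial, so that the accumulated error over all cells stays bounded).

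Your caveat about uniformity in $x_0$ is well spotted: as literally stated the lemma fails when $nx_0$ is pushed far outside $\overline{S_0}$ (then $\Lambda_n$ is empty), and the paper's one-dimensional argument also silently assumes the constraint $j \in \NN$ is inactive. Two remarks: first, the lemma is invoked not only with $x_0 = \delta_p \in \inter S_0$ (Theorem~\ref{thm:1}) but also with $x_0 = 0$ (Theorem~\ref{thm:2}), so your description of the applications is slightly incomplete. Second, for every $x_0 \in \overline{S_0}$ your region $D_n(x_0)$ contains $\overline{S_0} \cap \{|u| \leqslant 1\}$ (since $\overline{S_0}$ is a cone, $\sqrt{n}\,u + n x_0 \in \overline{S_0}$ whenever $u, x_0 \in \overline{S_0}$), and $\int_{\overline{S_0} \cap \{|u| \leqslant 1\}} G(u)\,du > 0$ is a fixed positive constant. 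This gives the uniform lower bound cleanly over all $x_0 \in \overline{S_0}$, which covers both applications.
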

\begin{proof}
	Let us observe that
	\[
		\Big\{\lambda \in P^+ : \norm{\lambda - n x_0}_{\infty} \leqslant r^{-\frac{1}{2}} n^{\frac{1}{2}+\gamma} \Big\}
		\subset
		\Lambda_n
		\subset
		\Big\{\lambda \in P^+ : \norm{\lambda - n x_0}_{\infty} \leqslant n^{\frac{1}{2}+\gamma} \Big\}.
	\]
	Hence, it is enough to show that for each $a > 0$ and $\beta \in \NN^r$,
	\[
		\sum_{\lambda \in P^+ : \norm{\lambda-n x_0}_{\infty} \leqslant an^{\frac{1}{2}+\gamma} } 
		\Big(\prod_{j = 1}^r |\sprod{\lambda - n x_0}{\alpha_j}|^{\beta_j} \Big)
		e^{-c \frac{\norm{\lambda-n x_0}^2}{n}}
		\approx 
		n^{\frac{r+|\beta|}{2}}.
	\]
	Observe that it is sufficient to consider $r = 1$, as the general case then easily follows. Since there is $C > 0$ such that
	for all $\tau \in [0, 1]$ and $j \in \NN$ satisfying $|j - n x_0| \leqslant a n^{1/2 + \gamma}$,
	\begin{align*}
		\Big|
		\exp\Big\{-c \frac{(j-n x_0)^2}{n}\Big\} - \exp\Big\{-c \frac{(j-\tau-n x_0)^2}{n}\Big\}
		\Big|
		\leqslant
		C n^{-\frac12+\gamma} \exp\Big\{-c \frac{(j-\tau-n x_0)^2}{n}\Big\},
	\end{align*}
	we obtain
	\begin{align*}
		&\Bigg|
		\sum_{j \in \NN : |j - n x_0| \leqslant a n^{\frac12+\gamma}} 
		\norm{j - nx_0}^b e^{-c \frac{(j-n x_0)^2}{n}}
		-
		\int_{\lfloor n x_0 - a n^{1/2+\gamma}\rfloor - 1}^{\lfloor n x_0 + a n^{\frac12+\gamma} \rfloor}
		|x-nx_0|^b e^{-c \frac{(x-n x_0)^2}{n}} {\: \rm d} x
		\Bigg| \\
		&\qquad\qquad\leqslant
		\sum_{j \in \NN : |j - n x_0| \leqslant a n^{\frac12+\gamma}}
		\int_0^1 
		|j - \tau -nx_0|^b
		\Big| e^{-c \frac{(j - n x_0)^2}{n}} - e^{-c \frac{(j-\tau-n x_0)^2}{n}}\Big| 
		{\: \rm d} \tau \\
		&\qquad\qquad\phantom{\leqslant \sum_{j \in \NN : |j - n x_0| \leqslant a n^{\frac12+\gamma}}}
		+
		\int_0^1 
		\big(|j - nx_0|^b - |j - \tau -nx_0|^b\big) e^{-c \frac{(j-\tau-n x_0)^2}{n}}
		{\: \rm d} \tau 
		\\
		&\qquad\qquad\leqslant
		C n^{-\frac12+\gamma} 
		\int_{\lfloor nx_0 - a n^{\frac12+\gamma}\rfloor-1}^{\lfloor nx_0 + a n^{\frac12+\gamma}\rfloor}
		|x-nx_0|^b e^{-c \frac{(x-nx_0)^2}{n}} {\: \rm d} x \\
		&\qquad\qquad\phantom{\leqslant}
		+ C
		\int_{\lfloor nx_0 - a n^{\frac12+\gamma}\rfloor-1}^{\lfloor nx_0 + a n^{\frac12+\gamma}\rfloor}
		|x - n x_0|^{b-1} e^{-c \frac{(x-nx_0)^2}{n}} {\: \rm d} x.
	\end{align*}
	Next, we write
	\begin{align*}
		\int_{nx_0 - a n^{\frac12+\gamma}}^{nx_0 + a n^{\frac12+\gamma}} 
		|x - n x_0|^b e^{-c \frac{(x-nx_0)^2}{n}} {\: \rm d} x
		&=
		\int_{-n^{\frac12+\gamma}}^{a n^{\frac12+\gamma}}
		|x|^b
		e^{-c\frac{x^2}{n}} {\: \rm d} x \\
		&=
		n^{\frac{1+b}{2}} \int_{-a n^{\gamma}}^{an^\gamma} |y|^b e^{-c y^2} {\: \rm d} y,
	\end{align*}
	and since
	\[
		\int_{-a n^{\gamma}}^{a n^\gamma} |y|^b e^{-c y^2} {\: \rm d} y =
		\int_\RR |y|^b e^{-c y^2} {\: \rm d} y + o(1),
	\]
	we get \eqref{eq:23}. The proof of \eqref{eq:24} is analogous.
\end{proof}

Let us denote by $p' \in [1, \infty]$, the conjugate exponent to $p$, that is
\[
	\frac{1}{p} + \frac{1}{p'} = 1.
\]
In the following sections we determine the asymptotic behavior of $\ell^p(V_g)$ norms of the heat kernel $k(n; o, \cdot)$
as $n$ goes to infinity. The study splits into three cases: $1 \leqslant p < 2$ (Section \ref{sec:2.1}), $p = 2$ (Section 
\ref{sec:2.2}), and $p > 2$ (Section \ref{sec:2.3}).

\subsection{The case $p \in [1, 2)$}
\label{sec:2.1}
Given $0 < \gamma < \frac{1}{6}$, for each $n \in \NN$ we set
\begin{equation}
	\label{eq:22a}
	\calN_n^p=\left\{\lambda \in P^+: \left|\frac{\lambda}{n} - \delta_p \right| \leqslant n^{-\frac{1}{2}+\gamma}\right\},
\end{equation}
and
\begin{equation}
	\label{eq:4a}
	\scrN^p_n=\big\{x \in V_g : \sigma(o, x) \in \calN_n^p\big\}.
\end{equation}
For a given $n \in \NN$, and $x \in V_g$ we often write $\delta = \frac{\sigma(o, x)}{n}$.

\begin{theorem}
	\label{thm:1}
	Let $k_n$ be a transition kernel of an admissible random walk on good vertices of an affine building $\scrX$.
	If $p \in [1, 2)$, then
	\[
		\|k_n \|_{\ell^p}
		\approx 
		n^{-\frac{r}{2p'}} 
		\varrho^{n} \kappa(s_p)^n.
	\]
	Furthermore,
	\[
		\frac{1}{\| k_n\|_{\ell^p}}
		\bigg( \sum_{x \in V_g \setminus \scrN_n^p} k(n; x)^p\bigg)^{\frac{1}{p}}
		= \calO\Big(e^{-c n^{2\gamma}} \Big).
	\]
\end{theorem}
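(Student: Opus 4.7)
The plan is to split $\sum_{x \in V_g} k(n;x)^p$ into a contribution from the critical region $\scrN_n^p$, where the heat kernel concentrates, and a contribution from its complement, which will turn out to be exponentially smaller. On $\scrN_n^p$ I will apply the precise asymptotic \eqref{eq:3a}; on the complement I will use Lemma \ref{lem:4} to reduce the question to an exponential sum and bound it by a quadratic Taylor expansion. Both steps revolve around the function $F(\delta) := \langle s_p,\delta\rangle - \phi(\delta)$; the identity $\chi_0(\lambda)^{1-p/2} = e^{pn\langle s_p,\lambda/n\rangle}$, which follows from $s_p = (2/p-1)\eta$ together with $\chi_0 = e^{2\langle\cdot,\eta\rangle}$, is exactly what brings $F$ into Lemma \ref{lem:4}.

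\textbf{Key identity and location of $\delta_p$.} Since $\phi$ is the Legendre transform of $\log\kappa$ and $\delta_p = \nabla\log\kappa(s_p)$, we have $\phi(\delta_p) = \langle s_p,\delta_p\rangle - \log\kappa(s_p)$, so $F$ attains its strict maximum $\log\kappa(s_p)$ at $\delta_p$ with Hessian $-B_{s_p}^{-1}$, giving
\[
	F(\delta) = \log\kappa(s_p) - \tfrac{1}{2} B_{s_p}^{-1}(\delta - \delta_p, \delta - \delta_p) + \calO(|\delta - \delta_p|^3).
\]
Moreover, $\delta_p \in \inter S_0 \cap \inter \calM$: the first holds because $\nabla \log \kappa$ is $W$-equivariant and $s_p \in \inter S_0$ (which uses $\tau_\alpha \tau_{2\alpha}^2 > 1$ together with Lemma \ref{lem:3}(iii)), the second because $\delta_p$ is a convex combination of $\calV$ with strictly positive coefficients $c_v e^{\langle s_p, v\rangle}/\kappa(s_p)$.

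\textbf{Main term on $\scrN_n^p$.} As $\delta_p$ is strictly inside both $S_0$ and $\calM$, the hypotheses of \eqref{eq:3a} hold uniformly on $\scrN_n^p$ for large $n$; moreover $s_n = \nabla \phi(\lambda/n) \to s_p$, so $\det B_{s_n}$ and $\bfc(s_n)^{-1}$ converge to positive constants. Using that $k(n; \cdot)$ is constant on $V_\lambda(o)$ and $N_\lambda = W(q^{-1}) \chi_0(\lambda)$ for $\lambda \in \inter S_0$, the Taylor expansion of $F$ yields, uniformly for $\lambda \in \calN_n^p$,
\[
	\sum_{x \in V_\lambda(o)} k(n;x)^p \approx n^{-pr/2} \varrho^{pn} \kappa(s_p)^{pn} \exp\Big\{-\tfrac{p}{2n} B_{s_p}^{-1}(\lambda - n \delta_p, \lambda - n \delta_p)\Big\},
\]
the cubic Taylor error, once multiplied by $pn$, being $\calO(n^{-1/2 + 3\gamma}) = o(1)$ precisely because $\gamma < 1/6$. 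Summing over $\lambda \in \calN_n^p$ via Lemma \ref{lem:1} with $\beta = 0$ produces a Gaussian mass $\approx n^{r/2}$, so $\|k_n\|_{\ell^p(\scrN_n^p)} \approx n^{-r/(2p')} \varrho^n \kappa(s_p)^n$, which already provides the lower bound in the first claim.

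\textbf{Tail estimate and main obstacle.} By Lemma \ref{lem:4}, the complementary sum is controlled by $\varrho^{pn}\sum_{\lambda \notin \calN_n^p} e^{pn F(\lambda/n)}$. I split this into three ranges: (a) $n^{-1/2 + \gamma} \leqslant |\lambda/n - \delta_p| \leqslant \epsilon$, where the quadratic Taylor bound gives $F(\delta) \leqslant \log \kappa(s_p) - c|\delta - \delta_p|^2$ and a dyadic application of Lemma \ref{lem:1} yields a total $\lesssim \kappa(s_p)^{pn} n^{r/2} e^{-c n^{2\gamma}}$; (b) $|\lambda/n - \delta_p| > \epsilon$ with $\lambda/n$ in a fixed compact subset of $\calM$, where strict concavity of $F$ forces a uniform gap $F \leqslant \log \kappa(s_p) - c'$ that dominates the polynomial count of $\lambda$; and (c) $\lambda/n$ outside a bounded neighborhood of $\overline{\calM}$, contributing nothing because the walk has finite range. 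Combining these with the main-term asymptotic and taking $p$-th roots gives both $\|k_n\|_{\ell^p} \approx n^{-r/(2p')}\varrho^n \kappa(s_p)^n$ and the stated $\calO(e^{-c n^{2\gamma}})$. The delicate point is the uniformity of \eqref{eq:3a} across all of $\scrN_n^p$ and the calibration of the cubic Taylor remainder against the radius $n^{-1/2 + \gamma}$ of $\calN_n^p$; this matching is exactly what forces the restriction $\gamma < 1/6$ adopted in \eqref{eq:22a}.
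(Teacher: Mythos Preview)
Your proposal is correct and follows essentially the same approach as the paper: both define the concave function $F(\delta)=\langle s_p,\delta\rangle-\phi(\delta)$ (the paper calls it $\psi$), use the asymptotic \eqref{eq:3a} on $\scrN_n^p$ together with the Taylor expansion of $F$ at $\delta_p$ and Lemma~\ref{lem:1}, and control the complement via Lemma~\ref{lem:4} and the concavity of $F$. The only noteworthy difference is in the tail: you split $V_g\setminus\scrN_n^p$ into three ranges, whereas the paper observes in one stroke that strict concavity of $\psi$ on the bounded set $\calM$ gives a global bound $\psi(\delta)-\psi(\delta_p)\leqslant -c|\delta-\delta_p|^2$, then peels off $e^{-\frac{c}{2}pn^{2\gamma}}$ and sums the remaining Gaussian over all of $P$ --- this avoids your regions~(b) and~(c) entirely and is slightly cleaner.
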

\begin{proof}
	We start by determining the asymptotic behavior of the norm restricted to $\scrN_n^p$. We claim that
	\[
		\sum_{x \in \scrN_n^p} k(n;x)^p 
		\approx \varrho^{np}\, e^{np\psi(\delta_p)}\, n^{-\frac{r}{2}(p-1)}
	\]
	where $\psi$ is a function on $\inter \calM$ defined as
	\[
		\psi(\delta)=\langle \delta, s_p \rangle-\phi(\delta), \qquad \delta \in \inter \calM.
	\]
	In view of Lemma \ref{lem:3}\eqref{en:2:3}, for each $\alpha \in \Phi^{++}$ we have $\sprod{s_p}{\alpha} > 0$, thus
	for each $x \in \scrN_n^p$,
	\[
		\big|\sprod{\delta - \delta_p}{\alpha}\big|
		\leqslant
		|\alpha| n^{-\frac{1}{2}+\gamma},
	\]
	and so
	\[
		\sprod{\delta}{\alpha} \geqslant \sprod{\delta_p}{\alpha} - |\alpha| n^{-\frac{1}{2} + \gamma}.
	\]
	This legitimates the use of the asymptotic formula \eqref{eq:3a} in the region $\scrN_n^p$. Since
	$\bfc(s)^{-1}$ and $(\det B_{s})^{-1/2}$ are real-analytic in this region, by \eqref{eq:16}, we get
	\begin{align*}
		\sum_{x\in \scrN_n^p} k(n; x)^p
		&\approx
		n^{-\frac{pr}{2}} \varrho^{np} 
		\sum_{x \in \scrN_n^p} 
		\exp\big\{-np\phi(\delta)\big\} \chi_0(n \delta)^{-\frac{p}{2}} \big(\det B_s\big)^{-\frac{1}{2}}
		\frac{1}{\bfc(s)} \\
		&\approx
		n^{-\frac{pr}{2}} \varrho^{np} 
		\sum_{\lambda \in \calN_n^p}
		\exp\big\{-np\phi(\delta)\big\} \exp\Big\{ n \left( 1-\frac{p}{2}\right) \sprod{\delta}{\eta}\Big\} \\
		&\approx 
		n^{-\frac{pr}{2}} \varrho^{np} 
		\sum_{\lambda \in \calN_n^p}
		e^{n p \psi(\delta)}.
	\end{align*}
	Next, we compute
	\[
		D_{\delta-\delta_p} \psi(\delta_p) 
		=
		-D_{\delta-\delta_p} \phi(\delta_p) = -B^{-1}_{s_p}(\delta - \delta_p, \delta-\delta_p),
	\]
	thus there are $N \geqslant 1$ and $C > 0$ such that for all $n \geqslant N$, if $x \in \scrN_n^p$ then
	\begin{align*}
		\Big| \psi(\delta) - \psi(\delta_p) + \frac{1}{2} 
		B^{-1}_{s_p}(\delta - \delta_p, \delta-\delta_p)
		\Big|
		\leqslant
		C
		|\delta - \delta_p|^3.
	\end{align*}
	Therefore, we get
	\begin{align*}
		&
		\Big|e^{n p \psi(\delta)} - e^{n p \psi(\delta_p)} 
		e^{-p \frac{n}{2} B^{-1}_{s_p}(\delta - \delta_p, \delta-\delta_p)}
		\Big| \\
		&\qquad=
		e^{n p \psi(\delta_p) - p \frac{n}{2} B^{-1}_{s_p}(\delta - \delta_p, \delta-\delta_p)}
		\Big|1 - e^{n p \psi(\delta) - n p \psi(\delta_p) + p \frac{n}{2} B^{-1}_{s_p}(\delta - \delta_p, \delta-\delta_p)} \Big|
		\\
		&\qquad\leqslant
		C' n^{-\frac{1}{2} + 3 \gamma} e^{n p \psi(\delta_p) - p \frac{n}{2} B^{-1}_{s_p}(\delta - \delta_p, \delta-\delta_p)}.
	\end{align*}
	Consequently,
	\begin{align*}
		\sum_{\lambda \in \calN_n^p}
		e^{n p \psi(\delta)} 
		=
		e^{n p \psi(\delta_p)}
		\bigg(\sum_{\lambda \in \calN_n^p}
		e^{-p \frac{n}{2} B^{-1}_{s_p}(\delta - \delta_p, \delta-\delta_p)} \bigg)
		\Big(1 + \calO\big(n^{-\frac{1}{2}+3\gamma}\big)\Big)
	\end{align*}
	which by \eqref{eq:23} leads to
	\begin{equation}
		\label{eq:29}
		\sum_{x \in \scrN_n^p} k(n; x)^p
		\approx
		n^{-\frac{r}{2}(p-1)} \varrho^{np}
		e^{n\psi(\delta_p)}.
	\end{equation}
	Let us next observe that $\nabla \psi(\delta_p) = 0$. Since $\phi$ is strictly convex in $\inter \calM$, $\psi$ is 
	strictly concave. Hence, there is $c > 0$ such that
	\begin{equation}
		\label{eq:26}
		\psi(\delta)-\psi(\delta_p) \leqslant -c |\delta-\delta_p|^2
	\end{equation}
	for all $\delta \in \calM$. Consider now $x \in V_g \setminus \scrN_n^p$. By Lemma \ref{lem:4} and \eqref{eq:26}, we get
	\begin{align*}
		\sum_{x \in V_g\setminus \scrN_n^p} k(n; x)^p
		&\lesssim
		\varrho^{n p} e^{n p \psi(\delta_p)} 
		\sum_{\lambda \in P^+ \setminus \calN_n^p}
		e^{n p (\psi(\delta) - \psi(\delta_p))} \\
		&\lesssim
		\varrho^{n p} e^{n p \psi(\delta_p)} 
		\sum_{\lambda \in P^+ \setminus \calN_n^p}
		e^{-c n p|\delta-\delta_p|^2} \\
		&\lesssim \varrho^{n p} e^{np\psi(\delta_p)} e^{-\frac{1}{2} cn^{2\gamma} p }
		\sum_{\lambda \in P} e^{-\frac{1}{2} c n p |\delta - \delta_p|^2}.
	\end{align*}
	In view of \eqref{eq:23}, we get
	\[
		\sum_{x \in V_g \setminus \scrN_n^p} k(n; x)^p
		\lesssim
		n^{\frac{r}{2}} \varrho^{n p} e^{np\psi(\delta_p)} e^{-c'' n^{2\gamma}}
	\]
	which together with \eqref{eq:29} completes the proof.
\end{proof}

\subsection{The case $p = 2$}
\label{sec:2.2}
Let
\begin{equation}
	\label{eq:38}
	0 < \gamma < \frac{1}{4|\Phi^{++}|}
	\quad\text{and}\quad
	\gamma' = 2 \gamma |\Phi^{++}|.
\end{equation} 
For each $n \in \NN$, we set
\[
	\calN_n^2 =
	\left\{\lambda \in P^+ : 
	n^{\frac{1}{2}-\gamma} \leqslant | \lambda | \leqslant n^{\frac{1}{2}+\gamma}, \text{ and }
	\sprod{\lambda}{\alpha} \geqslant n^{\frac{1}{2} - \gamma'}
	\text{ for all } \alpha\in \Phi^{+}\right\},
\]
and
\begin{equation}
	\label{eq:4b}
	\scrN_n^2 = \big\{ x \in V_g : \sigma(o, x) \in \calN_n^2 \big\}.
\end{equation}
\begin{theorem}
	\label{thm:2}
	Let $k_n$ be a transition kernel of an admissible random walk on good vertices of an affine building $\scrX$.
	Then
	\[
		\|k_n \|_{\ell^2}
		\approx 
		n^{-\frac{r}{4}-\frac{|\Phi^{++}|}{2}} \varrho^{n}.
	\]
	Furthermore,
	\[
		\frac{1}{\| k_n\|_{\ell^2}}
		\bigg( \sum_{x \in V_g \setminus \scrN_n^2} k(n; x)^2 \bigg)^\frac{1}{2} = 
		\calO\Big(n^{-\gamma|\Phi^{++}|}\Big).
	\]
\end{theorem}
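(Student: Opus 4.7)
We follow the template of the proof of Theorem \ref{thm:1}, but for $p = 2$ we have $s_2 = 0$ and hence $\delta_2 = \nabla \log \kappa(0) = 0$ by $W$-invariance of $\kappa$. Consequently, the critical region $\scrN_n^2$ concentrates at the origin in the $\delta$-variable, forcing us to use the ``central'' asymptotic \eqref{eq:3b} in place of the ``interior-of-$\calM$'' formula \eqref{eq:3a}. The new feature is the ground state spherical function $\mathbf{\Phi}$, and the shape of $\calN_n^2$---requiring $\sprod{\lambda}{\alpha} \geqslant n^{1/2-\gamma'}$ for every $\alpha \in \Phi^+$---is dictated by the behavior of $\mathbf{\Phi}$ near the walls of $S_0$ as described in Proposition \ref{prop:1}.

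\textbf{Step 1: the sum over $\scrN_n^2$.} For $\lambda \in \calN_n^2$, every coordinate $\sprod{\lambda}{\alpha_i}$ is positive, so $\lambda$ is strongly dominant, $W_\lambda = \{e\}$ and $N_\lambda = W(q^{-1})\chi_0(\lambda)$; moreover Proposition \ref{prop:1} yields $\mathbf{\Phi}(\lambda)^2 \approx \chi_0(\lambda)^{-1} \prod_{\alpha \in \Phi^{++}} \sprod{\alpha}{\lambda}^2$, since each $\sprod{\alpha}{\lambda} \geqslant n^{1/2-\gamma'} \to \infty$. Substituting \eqref{eq:3b} (applicable because $|\delta_n| \leqslant n^{-1/2+\gamma} \to 0$), the factors $\chi_0(\lambda)^{\pm 1}$ cancel, leaving
\[
\sum_{x \in \scrN_n^2} k(n; x)^2 \approx n^{-r - 2|\Phi^{++}|}\, \varrho^{2n} \sum_{\lambda \in \calN_n^2} e^{-2n\phi(\lambda/n)} \prod_{\alpha \in \Phi^{++}} \sprod{\alpha}{\lambda}^2.
\]
By \eqref{eq:33a}, $2n\phi(\lambda/n) = n^{-1} B_0^{-1}(\lambda,\lambda) + \calO(n^{-1/2+3\gamma})$, and the rescaling $\lambda = \sqrt{n}\,\nu$ turns the inner sum into a Riemann sum over the dilating region $\tilde{\calN}_n = \{\nu : n^{-\gamma} \leqslant |\nu| \leqslant n^\gamma,\ \sprod{\nu}{\alpha} \geqslant n^{-\gamma'}\}$; by a direct adaptation of Lemma \ref{lem:1} to products over $\Phi^{++}$ (the integrand $e^{-B_0^{-1}(\nu,\nu)}\prod_\alpha \sprod{\alpha}{\nu}^2$ being absolutely integrable on $S_0$), this sum is asymptotic to a positive constant times $n^{r/2 + |\Phi^{++}|}$. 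Hence $\sum_{x \in \scrN_n^2} k(n; x)^2 \approx n^{-r/2 - |\Phi^{++}|} \varrho^{2n}$.

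\textbf{Step 2: the complement.} Split $V_g \setminus \scrN_n^2$ into $V_g^{\mathrm{far}} = \{|\sigma(o,x)| > n^{1/2+\gamma}\}$, $V_g^{\mathrm{near}} = \{|\sigma(o,x)| < n^{1/2-\gamma}\}$, and $V_g^{\mathrm{wall}}$ where the wall condition fails. On $V_g^{\mathrm{far}}$, Lemma \ref{lem:4} combined with $\phi(\delta) \gtrsim |\delta|^2$ gives $2n\phi(\lambda/n) \gtrsim n^{2\gamma}$, hence stretched-exponential decay $e^{-cn^{2\gamma}}$ and a super-polynomially small relative contribution. On $V_g^{\mathrm{near}}$ we still apply \eqref{eq:3b}, use $\mathbf{\Phi}(\lambda)^2 \lesssim \chi_0(\lambda)^{-1}(1+|\lambda|)^{2|\Phi^{++}|}$ together with $N_\lambda \lesssim \chi_0(\lambda)$, and sum over $|\lambda| \lesssim n^{1/2-\gamma}$ to obtain a relative contribution $\lesssim n^{-\gamma(r + 2|\Phi^{++}|)}$. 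For $V_g^{\mathrm{wall}} = \bigcup_\alpha V_g^{\mathrm{wall},\alpha}$, we fix a failing $\alpha$, bound the single factor $(1+\sprod{\alpha}{\lambda})^2 \lesssim n^{1-2\gamma'}$, each remaining factor by $n^{1+2\gamma}$, and the lattice-point count in the narrow slab $\{|\lambda| \leqslant n^{1/2+\gamma},\ \sprod{\lambda}{\alpha} < n^{1/2-\gamma'}\}$ by $\lesssim n^{(r-1)(1/2+\gamma) + 1/2 - \gamma'}$. Collecting exponents and substituting $\gamma' = 2\gamma|\Phi^{++}|$ makes the relative contribution $\calO(n^{-2\gamma|\Phi^{++}|})$; summing over the finitely many $\alpha$ and taking square roots yields the claimed rate $\calO(n^{-\gamma|\Phi^{++}|})$.

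\textbf{Main obstacle.} The delicate step is $V_g^{\mathrm{wall}}$: a single failing root $\alpha$ contributes only one ``small'' factor in $\mathbf{\Phi}(\lambda)^2$, which must overcome the polynomial growth contributed by the other $|\Phi^{++}| - 1$ factors after counting lattice points in the narrow slab near the wall. The calibration $\gamma' = 2\gamma|\Phi^{++}|$ in \eqref{eq:38} is precisely chosen to balance these exponents (one checks the resulting excess reduces to the inequality $r - 3 \leqslant 2|\Phi^{++}|$, which always holds since $|\Phi^{++}| \geqslant r$), and the constraint $\gamma < 1/(4|\Phi^{++}|)$ simultaneously ensures $\gamma' < 1/2$, so that the slab condition is genuinely restrictive, and that the cubic error $\calO(n^{-1/2+3\gamma})$ in the expansion of $\phi$ around zero is negligible.
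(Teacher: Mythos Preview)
Your proof is correct and follows essentially the same strategy as the paper: apply the central asymptotic \eqref{eq:3b} together with Proposition~\ref{prop:1}, establish the main term on $\scrN_n^2$ via a rescaled Riemann-sum argument, and control the complement by splitting into far, near, and wall pieces. The only noteworthy difference is in the wall region. The paper keeps the Gaussian weight $e^{-2n\phi(\delta)}$, bounds the product $\prod_\alpha(1+\sprod{\alpha}{\lambda}^2)$ pointwise by $n^{|\Phi^{++}|}n^{-2\gamma(|\Phi^{++}|+1)}$, and then sums $e^{-c|\lambda|^2/n}$ over the \emph{full} annulus $\calA_n$, which contributes $n^{r/2}$; this yields a relative error $n^{-2\gamma(|\Phi^{++}|+1)}$ with essentially no lattice geometry needed. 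You instead discard the Gaussian weight and replace it by a slab lattice count $\lesssim n^{(r-1)(1/2+\gamma)+1/2-\gamma'}$; this is more work but actually gives a sharper exponent in higher rank. Both routes suffice, and the paper's is the shorter one.
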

\begin{proof} 
	We first determine the asymptotic behavior of the norm restricted to $\calN_n^2$. For $v \in \mathfrak{a}$ we set
	\[
		m(v) = \min \big\{\sprod{v}{\alpha} : \alpha  \in \Phi^{++}\big\}.
	\]
	For $n \in \NN$, let
	\begin{align*}
		\calA_n &= \Big\{\lambda \in P^+ : n^{\frac{1}{2}-\gamma} \leqslant |\lambda| \leqslant n^{\frac12+\gamma} \Big\},
		&\calB_n &= \Big\{\lambda \in P^+: |\lambda| \leqslant n^{\frac12+\gamma} \Big\}, \\
		\scrA_n &= \{x \in V_g : \sigma(o, x) \in \calA_n\}, 
		&\scrB_n &= \{x \in V_g : \sigma(o, x) \in \calB_n\}. 
	\end{align*}
	Applying the asymptotic formula \eqref{eq:3b} together with Proposition \ref{prop:1}, we get
	\begin{align*}
		\sum_{x \in \scrN_n^2} k(n; x)^2 
		&\approx
		\varrho^{2n} n^{-r - 2 |\Phi^{++}|} 
		\sum_{x \in \scrN_n^2} e^{- 2 n \phi(\delta)} \chi_0(\sigma(o, x))^{-1} 
		\prod_{\alpha \in \Phi^{++}} (1 + \sprod{\alpha}{\sigma(o, x)}^2)  \\
		&\approx
		\varrho^{2n} n^{-r - 2 |\Phi^{++}|} 
		\sum_{\lambda \in \calN_n^2} 
		e^{-2 n \phi(\delta)} \prod_{\alpha \in \Phi^{++}} (1 + \sprod{\alpha}{\lambda}^2).
	\end{align*}
	Now, in view of \eqref{eq:33b}, there are $c' \geqslant c > 0$ such that 
	\begin{align*}
		\sum_{\lambda \in \calN_n^2} 
		e^{-2 n \phi(\delta)} \prod_{\alpha \in \Phi^{++}} 
		(1 + \sprod{\alpha}{\lambda}^2)
		\leqslant
		\sum_{\lambda \in \calB_n} e^{-c \frac{|\lambda|^2}{n}}
		\prod_{\alpha \in \Phi^{++}}
		(1 + \sprod{\alpha}{\lambda}^2),
	\end{align*}
	and
	\begin{align*}
		\sum_{\lambda \in \calN_n^2} 
		e^{-2 n \phi(\delta)} \prod_{\alpha \in \Phi^{++}} (1 + \sprod{\alpha}{\lambda}^2)
		&\geqslant
		\sum_{\lambda \in \calA_n : m(\lambda) > 1 } e^{-c' \frac{|\lambda|^2}{n}}
		\prod_{\alpha \in \Phi^{++}}
		(1 + \sprod{\alpha}{\lambda}^2).
	\end{align*}
	Observe that if $\lambda \in \calA_n$ and $m(\lambda) > 1$, then $\lambda = \lambda' + \rho$ and 
	\[
		\frac{1}{2} n^{\frac12-\gamma} \leqslant |\lambda'| \leqslant 2 n^{\frac12+\gamma}.
	\]
	Therefore, by Lemma \ref{lem:1}, we get
	\[
		\sum_{\lambda \in \calN_n^2} 
		e^{-2 n \phi(\delta)} \prod_{\alpha \in \Phi^{++}} (1 + \sprod{\alpha}{\lambda})^2
		\approx
		n^{\frac{r}{2} + |\Phi^{++}|},
	\]
	and consequently,
	\begin{equation}
		\label{eq:34}
		\sum_{x \in \scrN_n^2} k(n; x)^2 \approx \varrho^{2n} n^{-\frac{r}{2} - |\Phi^{++}|}.
	\end{equation}
	Next, let us consider the region $V_g \setminus \scrB_n$. By Lemma \ref{lem:4} and \eqref{eq:33b}, there is $c > 0$ 
	such that 
	\begin{align*}
		\sum_{x \in V_g \setminus \scrB_n}
		k(n; x)^2 
		&\lesssim 
		\varrho^{2n} 
		\sum_{\lambda \in P^+ \setminus \calB_n} e^{-c \frac{|\lambda|^2}{n}} \\
		&\lesssim
		\varrho^{2n}
		e^{-\frac{c}{2} n^{2\gamma}} \sum_{\lambda \in P} e^{-c \frac{|\lambda|^2}{2n}},
	\end{align*}
	thus
	\begin{equation}
		\label{eq:36}
		\sum_{x \in V_g \setminus \scrB_n} k(n; x)^2
		\lesssim \varrho^{2n} n^{-\frac{r}{2} - |\Phi^{++}|} e^{-\frac{c}{4} n^{2\gamma}}.
	\end{equation}
	Let us observe that in the region $x \in \scrB_n \setminus \scrA_n$, for each $\alpha \in \Phi^{++}$, 
	we have
	\[
		1 + \sprod{\lambda}{\alpha}^2 \lesssim n^{1- 2 \gamma}, 
	\]
	hence by the asymptotic formula \eqref{eq:3b} together with Proposition \ref{prop:1} and \eqref{eq:33b}, we get
	\begin{align*}
		\sum_{x \in \scrB_n \setminus \scrA_n} k(n; x)^2
		&\lesssim
		\varrho^{2n} n^{-r - 2 |\Phi^{++}|} 
		\sum_{x \in \scrB_n \setminus \scrA_n}
		e^{-2n \phi(\delta)} \chi_0(\sigma(o, x))^{-1} 
		\prod_{\alpha \in \Phi^{++}} \big(1 + \sprod{\sigma(o, x)}{\alpha}^2\big) \\
		&\lesssim
		\varrho^{2n} n^{-r -|\Phi^{++}|-2 \gamma|\Phi^{++}|}
		\sum_{\lambda \in \calB_n \setminus \calA_n} e^{-c \frac{|\lambda|^2}{n}}.
	\end{align*}
	Therefore, we obtain
	\begin{equation}
		\label{eq:37}
		\sum_{x \in \scrB_n \setminus \scrA_n} k(n; x)^2
		\lesssim
		\varrho^{2n} n^{-\frac{r}{2} -|\Phi^{++}|} n^{-2 \gamma|\Phi^{++}|}.
	\end{equation}
	Lastly, we need to consider the region $\scrA_n \setminus \scrN_n^2$. For $x \in \scrA_n \setminus \scrN_n^2$, there
	is $\beta \in \Phi^{++}$ such that
	\[
		\sprod{\sigma(o, x)}{\beta} \leqslant n^{\frac{1}{2}-\gamma'}.
	\]
	However, for all $\alpha \neq \beta$, we have
	\[
		\sprod{\sigma(o, x)}{\alpha} \leqslant C n^{\frac12+\gamma},
	\]
	thus by \eqref{eq:38}
	\[
		\prod_{\alpha \in \Phi^{++}} \big(1 + \sprod{\sigma(o, x)}{\alpha}^2\big)
		\lesssim 
		n^{|\Phi^{++}|} n^{-2(\gamma' - \gamma (|\Phi^{++}| - 1))}
		\leqslant
		n^{|\Phi^{++}|} n^{-2\gamma (|\Phi^{++}| + 1)}.
	\]
	Therefore, by the asymptotic formula \eqref{eq:3b} together with Proposition \ref{prop:1} and \eqref{eq:33b}, we obtain
	\begin{align}
		\nonumber
		\sum_{x \in \scrA_n \setminus \scrN_n^2}
		k(n; x)^2
		&\lesssim
		\varrho^{2n} n^{-r - 2 |\Phi^{++}|} 
		\sum_{\lambda \in \calA_n \setminus \calN_n^2}
		e^{-2n \phi(\delta)}
		\prod_{\alpha \in \Phi^{++}} \big(1 + \sprod{\lambda}{\alpha}^2\big) \\
		\label{eq:39}
		&\lesssim
		\varrho^{2n} n^{-\frac{r}2 - |\Phi^{++}|}  n^{-2\gamma (|\Phi^{++}| + 1)}.
	\end{align}
	Using estimates \eqref{eq:36}, \eqref{eq:37} and \eqref{eq:39} together with the asymptotic \eqref{eq:34} 
	we complete the proof.
\end{proof}
	
\subsection{The case $p \in (2, \infty]$}
\label{sec:2.3}
Let $p \in (2, \infty]$. Fix a sequence of positive numbers $(r_n : n \in \NN)$ such that
\begin{equation}
	\label{eq:40}
	\lim_{n \to \infty} \frac{r_n}{\log n} = \infty, \quad\text{ and }\quad \lim_{n \to \infty} \frac{r_n}{\sqrt{n}} = 0.
\end{equation}
Then for each $n \in \NN$, we set
\[
	\calN_n^p = \big\{\lambda \in P^+ : |\lambda| \leqslant r_n \big\},
\]
and
\begin{equation}
	\label{eq:4c}
	\scrN_n^p= \{x \in V_g : \sigma(o, x) \in \calN_n^p\}.
\end{equation}

\begin{theorem}
	\label{thm:3}
	Let $k_n$ be a transition kernel of an admissible random walk on good vertices of an affine building $\scrX$.
	Then
	\[
		\|k_n\|_{\ell^p} \approx 
		n^{-\frac{r}{2}-|\Phi^{++}|} \varrho^{n}.
	\]
	Furthermore,
	\[
		\lim_{n \to \infty}
		\frac{1}{\| k_n \|_{\ell^p}}
		\bigg(
		\sum_{x\notin \mathcal{N}_n^p} k(n; x)^p\bigg)^\frac{1}{p} = 
		\calO\Big(e^{-c_p r_n}\Big)
	\]
	for certain $c_p > 0$.
\end{theorem}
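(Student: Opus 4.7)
The plan is to mimic the architecture of Theorems \ref{thm:1} and \ref{thm:2}: split $V_g$ into the region $\scrN_n^p$ and its complement, use the fine asymptotic \eqref{eq:3b} on the first region to obtain both the lower bound for $\|k_n\|_{\ell^p}$ and the matching upper bound for its contribution, and use the global pointwise bound \eqref{eq:3} on the complement. The guiding observation is that for $p \in (2, \infty]$ the $\ell^p$ mass of the heat kernel concentrates in a region of sub-diverging diameter around $o$; this is why \eqref{eq:3b}, and not \eqref{eq:3a}, is the relevant local asymptotic.

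\textbf{Main term on $\scrN_n^p$.} For $x \in \scrN_n^p$, set $\delta = \sigma(o,x)/n$ so that $|\delta| \leqslant r_n/n \to 0$; in particular $\sprod{\delta}{\alpha} \to 0$ uniformly in $x$, meeting the hypothesis of \eqref{eq:3b}. Since $\phi(\delta) \approx |\delta|^2$ by \eqref{eq:33b} and $r_n = o(\sqrt{n})$, we have $n\phi(\delta) = O(r_n^2/n) = o(1)$, hence $e^{-n\phi(\delta)} = 1 + o(1)$ uniformly on $\scrN_n^p$. Combining \eqref{eq:3b} with Proposition \ref{prop:1}, the identity $N_\lambda \approx \chi_0(\lambda)$ from \eqref{eq:16}, and $\chi_0(\lambda) = e^{2\sprod{\eta}{\lambda}}$, for $p < \infty$ one gets
\[
\sum_{x \in \scrN_n^p} k(n;x)^p \approx n^{-p(r/2 + |\Phi^{++}|)} \varrho^{np}\, S_n,
\qquad
S_n := \sum_{\lambda \in \calN_n^p} e^{-(p-2) \sprod{\eta}{\lambda}} \prod_{\alpha \in \Phi^{++}} \bigl(1+\sprod{\alpha}{\lambda}\bigr)^p.
\]
Since $\eta$ lies in the open fundamental sector (Lemma \ref{lem:3}\eqref{en:2:4}), one has $\sprod{\eta}{\lambda} \geqslant c|\lambda|$ on $P^+$; combined with $p > 2$ this gives term-wise exponential decay in $|\lambda|$, so $S_n$ converges to a finite strictly positive constant as $n \to \infty$ (the $\lambda = 0$ term gives a lower bound of $1$). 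This yields $\|k_n \ind{\scrN_n^p}\|_{\ell^p} \approx n^{-r/2 - |\Phi^{++}|} \varrho^n$. The case $p = \infty$ is handled identically, taking the supremum: the maximum on $\scrN_n^p$ is achieved near $\lambda = 0$ and equals $\approx n^{-r/2 - |\Phi^{++}|}\varrho^n \mathbf{\Phi}(\sigma(o,x))$, which is bounded above and below by multiples of $n^{-r/2 - |\Phi^{++}|}\varrho^n$.

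\textbf{Tail estimate outside $\scrN_n^p$.} Here I would invoke the global upper bound \eqref{eq:3}: for all $x$,
\[
k(n;x)^p \leqslant C^p \chi_0(\sigma(o,x))^{-p/2} \varrho^{np} e^{-np\phi(\delta)}.
\]
Summing and grouping good vertices by their vector distance, using $N_\lambda \chi_0(\lambda)^{-p/2} \approx e^{-(p-2)\sprod{\eta}{\lambda}}$ and $\phi(\delta) \gtrsim |\delta|^2 = |\lambda|^2/n^2$, I obtain
\[
\sum_{x \notin \scrN_n^p} k(n;x)^p \lesssim \varrho^{np} \sum_{\lambda \in P^+,\, |\lambda| > r_n} e^{-(p-2)\sprod{\eta}{\lambda}} e^{-cp|\lambda|^2/n}
\lesssim \varrho^{np} e^{-c'(p-2) r_n},
\]
where the second inequality uses $\sprod{\eta}{\lambda} \geqslant c|\lambda|$ to bound a convergent exponential series by its leading tail. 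For $p = \infty$ the same bound gives $\sup_{x \notin \scrN_n^p} k(n;x) \lesssim \varrho^n e^{-c' r_n}$ directly.

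\textbf{Absorbing the polynomial prefactor and conclusion.} Dividing the tail bound by $\|k_n\|_{\ell^p}^p \approx n^{-p(r/2 + |\Phi^{++}|)} \varrho^{np}$ yields a ratio bounded by $C\, n^{p(r/2 + |\Phi^{++}|)} e^{-c'(p-2) r_n}$. This is where the assumption $r_n / \log n \to \infty$ in \eqref{eq:40} is essential: for $n$ large enough one has $n^{p(r/2 + |\Phi^{++}|)} \leqslant e^{c'(p-2) r_n / 2}$, so the ratio is $O(e^{-c_p p r_n})$, which after extraction of the $p$-th root gives the advertised $O(e^{-c_p r_n})$. The main obstacle I anticipate is the uniform validity of \eqref{eq:3b} on $\scrN_n^p$ in the presence of good vertices arbitrarily close to the walls of the Weyl cone, but the formulation of \eqref{eq:3b} in \cite{tr} requires precisely $\sprod{\delta_n}{\alpha} \to 0$ for all $\alpha \in \Phi$ (not $\delta_n$ staying away from the walls), and this is automatic from $|\delta| \leqslant r_n/n \to 0$.
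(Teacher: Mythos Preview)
Your proposal is correct and follows essentially the same approach as the paper's proof: the same splitting into $\scrN_n^p$ and its complement, the same use of \eqref{eq:3b} with Proposition~\ref{prop:1} and $N_\lambda \approx \chi_0(\lambda)$ on the inner region together with $n\phi(\delta) = o(1)$ there, the same global bound \eqref{eq:3} on the complement yielding a tail series dominated by $\sum_{|\lambda|>r_n} e^{-(p-2)\sprod{\eta}{\lambda}}$, and the same use of $r_n/\log n \to \infty$ to absorb the polynomial prefactor. The only cosmetic difference is that the paper makes the constant $c_p = \tfrac{p-2}{4}\min_{\lambda\in P^+}\sprod{\eta}{\lambda}/|\lambda|$ explicit.
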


\begin{proof}
	We consider only the case where $p \in (2, \infty)$, as the case $p = \infty$ can be proven in a similar manner. 
	In view of \eqref{eq:40}, in the region $\scrN_n^p$, we can apply the asymptotic formula \eqref{eq:3b}
	together with Proposition \ref{prop:1} to get
	\begin{align*}
		\sum_{x \in \scrN_n^p} k(n; x)^p
		\approx
		\varrho^{np} n^{-\frac{pr}2 - p|\Phi^{++}|} 
		\sum_{\lambda \in \calN_n^p} e^{-n p \phi(\delta)} \chi_0(\lambda)^{1-\frac{p}2} 
		\prod_{\alpha \in \Phi^{++}} \big(1 + \sprod{\lambda}{\alpha}^2\big).
	\end{align*}
	By \eqref{eq:33b},
	\[
		0 \leqslant n \phi(\delta) \lesssim n |\delta|^2 \leqslant \Big(\frac{r_n}{\sqrt{n}}\Big)^2,
	\]
	thus by \eqref{eq:40},
	\begin{align*}
		\sum_{\lambda \in \calN_n^p} e^{-n p \phi(\delta)} \chi_0(\lambda)^{1-\frac{p}2} \prod_{\alpha \in \Phi^{++}}
		\big(1 + \sprod{\lambda}{\alpha}^2\big)
		\approx
		\sum_{\lambda \in \calN_n^p} e^{-(p-2)\sprod{\eta}{\lambda}} \prod_{\alpha \in \Phi^{++}}
		\big(1 + \sprod{\lambda}{\alpha}^2\big) \approx 1.
	\end{align*}
	Therefore,
	\[
		\sum_{x \in \scrN_n^p} k(n; x)^p \approx \varrho^{np} n^{-\frac{pr}2 - p|\Phi^{++}|}.
	\]
	It remains to consider the region $V_g \setminus \scrN_n^p$. In view of \eqref{eq:3} and \eqref{eq:3b} we have
	\begin{align*}
		\sum_{x \in V_g \setminus \scrN_n^p} k(n; x)^p 
		&\lesssim 
		\varrho^{np} 
		\sum_{\lambda \in P^+ \setminus \calN_n^p} 
		\chi_0(\lambda)^{1-\frac{p}2} e^{-c \frac{|\lambda|^2}{n}} \\
		&\leqslant
		\varrho^{np}
		\sum_{\lambda \in P^+ \setminus \calN_n^p}
		e^{-(p-2)\sprod{\eta}{\lambda}}.
	\end{align*}
	Setting
	\[
		c_p = \frac{p-2}{4} \min\bigg\{\frac{1}{|\lambda|} \sprod{\eta}{\lambda} : \lambda \in P^+ \bigg\} > 0
	\]
	we obtain
	\begin{align*}
		\sum_{x \in V_g \setminus \scrN_n^p} k(n; x)^p
		&\lesssim
		\varrho^{np} \sum_{\lambda \in P^+ \setminus \calN_n^p} e^{-4 c_p |\lambda|} \\
		&\lesssim
		\varrho^{np} e^{-2 c_p r_n}.
	\end{align*}
	Since
	\[
		c_p r_n  
		= c_p \frac{r_n}{\log n} \log n
		\geqslant p \bigg(\frac{r}{2} + |\Phi^{++}| + 1\bigg) \log n,
	\]
	the theorem follows.
\end{proof}

\section{Ratio limits for heat kernels}
\label{sec:3}
In this section we present some technical results, concerning the asymptotic behavior of quotients of heat kernel on
certain regions. The results of this section will be crucial to the description of the asymptotic behavior of caloric functions 
on affine buildings. 
\begin{theorem}
	\label{thm:4}
	Let $k_n$ be a transition kernel of an admissible random walk on good vertices of an affine building $\scrX$.
	For $\xi > 0$, $\varepsilon > 0$ and $y \in V_g$, and any sequence $(x_n : n \in \NN)$ of good vertices such that
	$\delta_n = \tfrac{1}{n} \sigma(o, x_n)$ satisfies
	\begin{equation}
		\label{eq:46}
		\sprod{\delta_n}{\alpha} 
		\geqslant 
		\xi \quad\text{for all } \alpha\in \Phi^{+}, \\
	\end{equation}
	and
	\[
		\dist(\delta_n, \partial \mathcal{M}) \geqslant \varepsilon,
	\]
	we have
	\[ 
		\frac{k(n; y, x_n)}{k(n; o, x_n)}
		=\chi_0^{-\frac{1}{2}}(\sigma(y,x_n)-\sigma(o,x_n))
		e^{-\sprod{s_n}{\sigma(y,x_n)-\sigma(o,x_n)}} \big(1+\calO(n^{-1})\big)
	\]
	where $s_n=\nabla\phi(\delta_n)$. The implicit constant in the error term depends only on $\xi$, $\varepsilon$, $A$, 
	and $y$.
\end{theorem}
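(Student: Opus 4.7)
The plan is to apply the sharp asymptotic \eqref{eq:3a} separately to $k(n;o,x_n)$ and to $k(n;y,x_n)$, then divide and track the surviving factors. First I would verify that the hypotheses of \eqref{eq:3a} also hold with base vertex $y$: setting $\delta_n'=n^{-1}\sigma(y,x_n)$, the distance estimate \eqref{eq:14} yields
\[
	|\delta_n'-\delta_n|\leqslant \frac{|\sigma(o,y)|}{n},
\]
so by \eqref{eq:11} one gets $\sprod{\delta_n'}{\alpha}\geqslant \xi/2$ for every $\alpha\in\Phi^{+}$ and $\dist(\delta_n',\partial\calM)\geqslant \varepsilon/2$, provided $n$ is large enough. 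Consequently \eqref{eq:3a} (which only uses the spectral representation of $A$ and no distinguished feature of $o$) applies with $y$ in place of $o$, at the cost of enlarging the implicit constant by a quantity depending on $|\sigma(o,y)|$.

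Taking the quotient of the two asymptotic expansions, the universal constant $C_0$ and the factor $n^{-r/2}\varrho^{n}$ cancel, and one is left with
\[
	\frac{k(n;y,x_n)}{k(n;o,x_n)}
	= e^{-n[\phi(\delta_n')-\phi(\delta_n)]}
	\cdot\Bigl(\tfrac{\chi_0(\sigma(o,x_n))}{\chi_0(\sigma(y,x_n))}\Bigr)^{1/2}
	\cdot R_n
	\cdot\bigl(1+\calO(n^{-1})\bigr),
\]
where $R_n=\frac{(\det B_{s_n'})^{-1/2}\bfc(s_n)}{(\det B_{s_n})^{-1/2}\bfc(s_n')}$ with $s_n'=\nabla\phi(\delta_n')$. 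The multiplicativity of $\chi_0$ from \eqref{eq:10} immediately gives $\bigl(\tfrac{\chi_0(\sigma(o,x_n))}{\chi_0(\sigma(y,x_n))}\bigr)^{1/2}=\chi_0(\sigma(y,x_n)-\sigma(o,x_n))^{-1/2}$. For the exponential factor I would expand $\phi$ to second order at $\delta_n$ and use $n(\delta_n'-\delta_n)=\sigma(y,x_n)-\sigma(o,x_n)$, which is bounded by $|\sigma(o,y)|$:
\[
	n\bigl[\phi(\delta_n')-\phi(\delta_n)\bigr]
	= \sprod{s_n}{\sigma(y,x_n)-\sigma(o,x_n)}+\calO(n^{-1}),
\]
the error being uniform since the second derivatives of $\phi$ are bounded on the compact set in which $\delta_n$ lies (a closed subset of $\inter\calM$ at positive distance from the walls).

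Finally, since $\delta\mapsto s(\delta)$ is real-analytic on $\inter\calM$ and both $\bfc(s)$ and $\det B_s$ are real-analytic and bounded away from $0$ on the compact region under consideration, the bound $|s_n'-s_n|=\calO(n^{-1})$ gives $R_n=1+\calO(n^{-1})$. Multiplying the four factors yields exactly the formula claimed. The main technical point is the verification that \eqref{eq:3a} holds with any fixed special vertex $y$ as base point — since \cite{tr} was formulated at $o$ and no transitive group action is assumed, one has to inspect the derivation there and confirm that shifting the origin only affects the implicit constant through $|\sigma(o,y)|$; once this is settled, the remaining work is a routine Taylor expansion in a compact analytic region.
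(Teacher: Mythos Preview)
Your proposal is correct and follows essentially the same route as the paper: apply \eqref{eq:3a} at both base points $o$ and $y$ (using \eqref{eq:14} to transfer the hypotheses), divide, control the $\bfc$- and $\det B$-ratios via the analyticity of $\delta\mapsto s(\delta)$ on compact subsets of $\inter\calM$, and Taylor-expand $\phi$ to first order. The only detail you skim over but the paper makes explicit is the uniform lower bound $\sprod{s_n}{\alpha\spcheck}\geqslant\zeta>0$ (obtained by compactness and the equivalence $\sprod{\delta}{\alpha}=0\Leftrightarrow\sprod{s(\delta)}{\alpha}=0$), which is what guarantees $\bfc(s)$ is real-analytic and non-vanishing on the relevant region.
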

\begin{proof}
	The hypothesis of the theorem entitles us to use the asymptotic formula \eqref{eq:3a}. We get
	\begin{equation}
		\label{eq:43}
		k(n; x_n) = n^{-\frac{r}2} \varrho^n e^{-n\phi(\delta)} \chi_0(\sigma(o,x_n))^{-\frac{1}{2}} 
		(\det B_{s_n})^{-\frac{1}{2}} \frac{1}{\bfc(s_n)} \Big(C_0 + \mathcal{O}(n^{-1})\Big).
	\end{equation}
	Let $\tilde{\delta}_n = \frac{1}{n} \sigma(y, x_n)$. Then by \eqref{eq:14}
	\begin{equation}
		\label{eq:44}
		|\tilde{\delta}_n - \delta_n| \leqslant \frac{|\sigma(o, y)|}{n}.
	\end{equation}
	There is $N \geqslant 1$ such that for all $n \geqslant N$,
	\[
		\frac{|\sigma(o, y)|}{n} \leqslant \tfrac{1}{2} \min\{\xi, \varepsilon\}.
	\]
	Then for all $n \geqslant N$, and $\alpha \in \Phi^+$,
	\[
		\sprod{\tilde{\delta}_n}{\alpha} \geqslant \tfrac{1}{2} \xi, \qquad \text{and}\qquad
		\dist(\tilde{\delta}, \partial \calM) \geqslant \tfrac{1}{2} \varepsilon,
	\]
	which once again legitimizes the use of asymptotic formula \eqref{eq:3a}. We get
	\begin{equation}
		\label{eq:49}
		k(n; y, x_n) = n^{-\frac{r}2} \varrho^n e^{-n\phi(\tilde{\delta})} \chi_0(\sigma(y, x_n))^{-\frac{1}{2}} 
		(\det B_{\tilde{s}_n})^{-\frac{1}{2}} \frac{1}{\bfc(\tilde{s}_n)} \Big(C_0 + \mathcal{O}(n^{-1})\Big)
	\end{equation}
	where $\tilde{s}_n = s(\tilde{\delta}_n)$. Next, let us recall that $s$ is real-analytic on every compact subset of
	$\inter \calM$, thus by \eqref{eq:44},
	\begin{equation}
		\label{eq:48}
		|\tilde{s}_n - s_n| 
		\leqslant C |\tilde{\delta}_n - \delta_n| \leqslant C n^{-1}.
	\end{equation}
	Analogously, one can show that
	\[
		|B_{\tilde{s}_n} - B_{s_n}| \leqslant C'n^{-1}.
	\]
	Furthermore, there is $\zeta > 0$ such that
	\begin{equation}
		\label{eq:47}
		\sprod{s_n}{\alpha\spcheck} > \zeta.
	\end{equation}
	Indeed, otherwise, we can select a subsequence $(\delta_{n_k} : k \in \NN)$ such that 
	\begin{equation}
		\label{eq:45}
		\sprod{s_{n_k}}{\alpha\spcheck} \leqslant \frac{1}{k}.
	\end{equation}
	Since $(\delta_n)$ is contained in the compact set $\{\delta \in \calM : \dist(\delta, \calM) \geqslant \xi\}$,
	we can assume that the sequence $(\delta_{n_k} : k \in \NN)$ converges to $\delta_0$. By \eqref{eq:45}, 
	$\sprod{s(\delta_0)}{\alpha\spcheck} = 0$, which implies that $\sprod{\delta_0}{\alpha} = 0$,
	contradicting \eqref{eq:46}.

	In view of \eqref{eq:47} and \eqref{eq:48}, we get
	\[
		\sprod{\tilde{s}_n}{\alpha\spcheck} = \sprod{s_n}{\alpha\spcheck}
		(1+\calO(n^{-1})),
	\]
	which leads to
	\[
		\bfc(\tilde{s}_n) = \bfc(s_n) (1+\calO(n^{-1})).
	\]
	Consequently, by \eqref{eq:49},
	\[
		k(n; y, x_n) = 
		n^{-\frac{r}2} \varrho^n e^{-n\phi(\tilde{\delta})} \chi_0(\sigma(y, x_n))^{-\frac{1}{2}}
		(\det B_{s_n})^{-\frac{1}{2}} \frac{1}{\bfc(s_n)} \Big(C_0 + \mathcal{O}(n^{-1})\Big).
	\]
	Hence, by \eqref{eq:43},
	\begin{equation*}
		\frac{k(n; y, x_n)}{k(n;o,x_n)} = 
		\chi_0^{-\frac{1}{2}}(\sigma(y,x_n)-\sigma(o,x_n))\,e^{-n(\phi(\tilde{\delta}_n)-\phi(\delta_n))}
		(1+\calO(n^{-1})).
	\end{equation*}
	In order to treat the terms in the exponential, let us notice that
	\begin{align*}
		\phi(\tilde{\delta}_n)
		&=\phi(\delta_n)+\sprod{\nabla\phi(\delta_n)}{\tilde{\delta}_n-\delta_n}+\calO(|\tilde{\delta}_n-\delta_n|^2)\\
		&=\phi(\delta_n)+\langle s_n,\widetilde{\delta}_n-\delta_n\rangle+\mathcal{O}(n^{-2}),
	\end{align*}
	which completes the proof of the tentative asymptotic formula.
\end{proof}	

As an immediate corollary, owing to the fact that $\sigma(o,x_n)-\sigma(y,x_n)$ is bounded, we get the following result.
\begin{corollary}
	\label{cor:2}
	Let $p \in [1, 2)$. Under the hypothesis of Theorem \ref{thm:4}, if $(\delta_n : n \in \NN)$ converges to 
	$\delta_p \in \calM$, then 
	\[ 
		\frac{k(n; y, x_n)}{k(n;o,x_n)}
		=
		\chi_0^{-\frac{1}{p}}(\sigma(y,x_n)-\sigma(o,x_n))\, 
		(1+\calO(|\delta_n - \delta_p|) + \calO(n^{-1})).
	\]
\end{corollary}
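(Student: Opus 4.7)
The plan is to start from the asymptotic ratio provided by Theorem \ref{thm:4} and replace the $n$-dependent vector $s_n=\nabla\phi(\delta_n)$ with the limit $s_p$, absorbing the resulting correction into the error terms. Set $v_n=\sigma(y,x_n)-\sigma(o,x_n)$. By the second triangle-type inequality \eqref{eq:14}, we have $|v_n|\leqslant |\sigma(o,y)|$, so $v_n$ is bounded uniformly in $n$. Theorem \ref{thm:4} yields
\[
\frac{k(n;y,x_n)}{k(n;o,x_n)}=\chi_0^{-\frac12}(v_n)\,e^{-\sprod{s_n}{v_n}}\bigl(1+\calO(n^{-1})\bigr).
\]

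Next I would identify $s(\delta_p)$ explicitly. Since $\delta_p=\nabla\log\kappa(s_p)$ by definition and since $s=s(\delta)$ is characterized by $\nabla\log\kappa(s(\delta))=\delta$ on $\calM$, we obtain $s(\delta_p)=s_p$. Moreover, the implicit function theorem gives that $s$ is real-analytic on the open set $\inter\calM$, and the hypothesis $\delta_n\to\delta_p\in\calM$ together with $\sprod{\delta_n}{\alpha}\geqslant\xi$ and $\dist(\delta_n,\partial\calM)\geqslant\varepsilon$ keeps $\delta_n$ in a fixed compact subset of $\inter\calM$. Therefore
\[
|s_n-s_p|\leqslant C|\delta_n-\delta_p|
\]
for some constant $C$ depending only on $\xi$, $\varepsilon$, and $A$.

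Combining these two facts, since $v_n$ is bounded we get $|\sprod{s_n-s_p}{v_n}|\leqslant C|\sigma(o,y)||\delta_n-\delta_p|$, and hence
\[
e^{-\sprod{s_n}{v_n}}=e^{-\sprod{s_p}{v_n}}\bigl(1+\calO(|\delta_n-\delta_p|)\bigr).
\]
Finally, for $p\in[1,2)$ we have $s_p=\eta(\tfrac{2}{p}-1)$, so by the definition $\chi_0(x)=e^{2\sprod{x}{\eta}}$ in \eqref{eq:10},
\[
\chi_0^{-\frac12}(v_n)\,e^{-\sprod{s_p}{v_n}}=e^{-\sprod{v_n}{\eta}}\,e^{-(\frac{2}{p}-1)\sprod{v_n}{\eta}}=e^{-\frac{2}{p}\sprod{v_n}{\eta}}=\chi_0^{-\frac{1}{p}}(v_n).
\]
Plugging this into the expression from Theorem \ref{thm:4} and merging the two error terms gives the claim.

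The argument is essentially routine, so there is no serious obstacle; the only delicate point is verifying that $s$ admits a uniform Lipschitz constant on the relevant compact neighborhood of $\delta_p$, which is ensured by the hypotheses \eqref{eq:46} and $\dist(\delta_n,\partial\calM)\geqslant\varepsilon$ together with the real-analyticity of $s$ on $\inter\calM$ recalled just before \eqref{eq:20}.
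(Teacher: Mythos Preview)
Your proof is correct and follows exactly the approach the paper intends; the paper's own proof is just the one-line remark that the result is an immediate corollary ``owing to the fact that $\sigma(o,x_n)-\sigma(y,x_n)$ is bounded,'' and you have simply spelled out the routine details (identifying $s(\delta_p)=s_p$, the Lipschitz bound $|s_n-s_p|\leqslant C|\delta_n-\delta_p|$, and the computation $\chi_0^{-1/2}(v_n)e^{-\sprod{s_p}{v_n}}=\chi_0^{-1/p}(v_n)$).
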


The next result concerns ratio limits near the origin. Let us recall the definition of spherical functions \eqref{eq:5}.
\begin{theorem}
	\label{thm:5}
	Let $k_n$ be a transition kernel of an admissible random walk on good vertices of an affine building $\scrX$.
	If $(x_n)$ is a sequence of good vertices such that
	\begin{equation}
		\label{eq:78}
		\lim_{n \to \infty} \frac{\sigma(o, x_n)}{n} = 0,
	\end{equation}
	then for each $y \in V_g$,
	\[
		\frac{k(n; y, x_n)}{k(n; o, x_n)}
		=
		\frac{\mathbf{\Phi}(\sigma(y, x_n))}{\mathbf{\Phi}(\sigma(o, x_n))}  
		\left(1+\calO\left(\frac{\sigma(o, x_n)}{n} \right)\right).
	\]
\end{theorem}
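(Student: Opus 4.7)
\medskip
\noindent\textbf{Proof proposal.} The plan is to apply the near-origin asymptotic \eqref{eq:3b} to both the numerator and the denominator and compare the resulting expressions term by term. Set
\[
\delta_n = \tfrac{1}{n}\sigma(o, x_n), \qquad \tilde{\delta}_n = \tfrac{1}{n}\sigma(y, x_n).
\]
The hypothesis \eqref{eq:78} gives $\sprod{\delta_n}{\alpha} \to 0$ for every $\alpha \in \Phi$, and \eqref{eq:11} together with the fact that $|\sigma(o, y)|$ is bounded yields
\[
\bigl|\sprod{\tilde{\delta}_n - \delta_n}{\alpha}\bigr| \leqslant |\alpha|\, \tfrac{|\sigma(o, y)|}{n} \longrightarrow 0,
\]
so that $\sprod{\tilde{\delta}_n}{\alpha} \to 0$ as well. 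Hence \eqref{eq:3b} applies to both $k(n; o, x_n)$ and $k(n; y, x_n)$. Taking the ratio, the prefactors $n^{-r/2 - |\Phi^{++}|}\varrho^n$ cancel, and we are left with
\[
\frac{k(n; y, x_n)}{k(n; o, x_n)} = \frac{\mathbf{\Phi}(\sigma(y, x_n))}{\mathbf{\Phi}(\sigma(o, x_n))}\, e^{-n(\phi(\tilde{\delta}_n) - \phi(\delta_n))}\,\frac{C_0 + \mathcal{O}(|\tilde{\delta}_n|) + \mathcal{O}(n^{-1})}{C_0 + \mathcal{O}(|\delta_n|) + \mathcal{O}(n^{-1})}.
\]

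The main obstacle is controlling the exponential factor; this is the heart of the argument. Let $\Delta_n = \sigma(y, x_n) - \sigma(o, x_n)$, which by \eqref{eq:14} satisfies $|\Delta_n| \leqslant |\sigma(o, y)|$, a constant depending on $y$ but not on $n$. Writing
\[
n\bigl(\phi(\tilde{\delta}_n) - \phi(\delta_n)\bigr) = \int_0^1 \sprod{\nabla \phi\bigl(\delta_n + t\Delta_n/n\bigr)}{\Delta_n}\, {\rm d} t,
\]
and using \eqref{eq:33a} to deduce $\nabla \phi(\delta) = B_0^{-1} \delta + \mathcal{O}(|\delta|^2)$ near the origin, we obtain
\[
\nabla \phi(\delta_n + t\Delta_n/n) = \mathcal{O}(|\delta_n|) + \mathcal{O}(n^{-1}).
\]
Multiplying by the bounded vector $\Delta_n$ and noting that $|\delta_n| \geqslant n^{-1}$ whenever $x_n \neq o$ (the case $x_n = o$ being trivial), we conclude $n(\phi(\tilde{\delta}_n) - \phi(\delta_n)) = \mathcal{O}(|\delta_n|)$. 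Because $|\delta_n| \to 0$, this lets us expand $e^{-n(\phi(\tilde{\delta}_n) - \phi(\delta_n))} = 1 + \mathcal{O}(|\delta_n|)$.

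It remains to handle the multiplicative factor coming from the error terms in \eqref{eq:3b}. From $|\tilde{\delta}_n| \leqslant |\delta_n| + |\sigma(o, y)|/n$ and again $|\delta_n| \geqslant n^{-1}$ for $x_n \neq o$, both $\mathcal{O}(|\tilde{\delta}_n|)$ and $\mathcal{O}(|\delta_n|)$ together with the $\mathcal{O}(n^{-1})$ terms are absorbed into a single $\mathcal{O}(|\delta_n|)$, so the quotient equals $1 + \mathcal{O}(|\delta_n|)$. Combining the three factors yields
\[
\frac{k(n; y, x_n)}{k(n; o, x_n)} = \frac{\mathbf{\Phi}(\sigma(y, x_n))}{\mathbf{\Phi}(\sigma(o, x_n))}\bigl(1 + \mathcal{O}(|\delta_n|)\bigr),
\]
which is the claimed asymptotic since $|\delta_n| = |\sigma(o, x_n)|/n$.
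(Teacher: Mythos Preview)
Your proof is correct and follows the same overall scheme as the paper: apply \eqref{eq:3b} to numerator and denominator, cancel the common prefactors, and show that $n(\phi(\tilde{\delta}_n) - \phi(\delta_n)) = \mathcal{O}(|\delta_n|)$. The only substantive difference is in how this last bound is established. You use the integral mean value theorem together with the expansion $\nabla\phi(\delta) = B_0^{-1}\delta + \mathcal{O}(|\delta|^2)$; this is legitimate because $\nabla\phi = s$ is real-analytic with $s(0) = 0$, though strictly speaking it is not a formal consequence of \eqref{eq:33a} alone (one cannot differentiate an $\mathcal{O}$-estimate without extra input). The paper instead exploits the Legendre-transform identity \eqref{eq:20}: from $\phi(\tilde{\delta}_n) \geqslant \sprod{s_n}{\tilde{\delta}_n} - \log\kappa(s_n)$ one gets directly $\phi(\delta_n) - \phi(\tilde{\delta}_n) \leqslant \sprod{s_n}{\delta_n - \tilde{\delta}_n} = \mathcal{O}(|s_n|/n)$, and then bounds $|s_n| = |s(\delta_n) - s(0)| \leqslant C|\delta_n|$ by the mean value theorem applied to $s$ itself. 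Your route is a touch more elementary; the paper's leans on the convex-dual structure of $\phi$ and avoids differentiating an asymptotic expansion. Both reach the same conclusion.
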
 
\begin{proof}
	Let
	\[
		\delta_n = \frac{1}{n} \sigma(o, x_n), \qquad\text{and}\qquad
		\tilde{\delta}_n = \frac{1}{n} \sigma(y, x_n),
	\]
	and $s_n = s(\delta_n)$, $\tilde{s}_n = s(\tilde{\delta}_n)$. By \eqref{eq:12} and \eqref{eq:78}
	\[
		\lim_{n \to \infty} \tilde{\delta}_n = 0,
	\]
	which allows us to use the asymptotic formula \eqref{eq:3b}. We obtain
	\[
		\frac{k(n; y, x_n)}{k(n; o, x_n)}
		=
		\frac{\mathbf{\Phi}(\sigma(y, x_n))}{\mathbf{\Phi}(\sigma(o, x_n))}
		e^{-n\phi(\delta_n) + n \phi(\tilde{\delta}_n)} 
		\Big(1 + \calO(|\delta_n|)\Big).
	\]
	To finish the proof, we need to show that
	\[
		n \big(\phi(\delta_n) - \phi(\tilde{\delta}_n)\big) = \calO(|\delta_n|).
	\]
	By \eqref{eq:20}, we have
	\[
		\sprod{\tilde{\delta}_n}{s_n} - \log \kappa(s_n)
		\leqslant 
		\sprod{\tilde{\delta}_n}{\tilde{s}_n} 
		- 
		\log \kappa(\tilde{s}_n)= \phi(\tilde{\delta}_n).
	\]
	Hence, by \eqref{eq:14}
	\begin{align*}
		\phi(\delta_n)- \phi(\tilde{\delta}_n) 
		&\leqslant 
		\phi(\delta_n)- \sprod{\tilde{\delta}_n}{s_n} + \log\kappa(s_n)\\
		&=\frac{1}{n} \sprod{(\sigma(o, x_n) - \sigma(y, x_n))}{s_n} = \mathcal{O}(|s_n|/n).
	\end{align*}
	Since $s$ is real-analytic on any compact subset of $\inter \calM$, by the mean value theorem
	\[
		|s_n| = |s(\delta_n) - s(0)| \leqslant C |\delta_n|,
	\]
	and the claim follows.
\end{proof}

The above result can be further elaborated away from the walls.
	
\begin{corollary}
	\label{cor:1}
	Under the hypothesis of Theorem \ref{thm:5}, if $(x_n)$ is a sequence of good vertices such that
	for each $\alpha \in \Phi^+$,
	\[
		\lim_{n \to \infty} \sprod{\sigma(o, x_n)}{\alpha} = \infty,
	\]
	then
	\[
		\frac{k(n; y, x_n)}{k(n; o, x_n)}=
		\chi_0^{-\frac{1}{2}}(\sigma(y,x_n)-\sigma(o,x_n)) + o(1).
	\]
\end{corollary}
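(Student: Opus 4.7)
The plan is to chain Theorem \ref{thm:5} with the sharp asymptotic for the ground state spherical function away from the walls given in Proposition \ref{prop:1}. The hypothesis of Theorem \ref{thm:5} is already contained in the assumption $\sprod{\sigma(o,x_n)}{\alpha} \to \infty$ for all $\alpha \in \Phi^+$: since $\sigma(o,x_n)/n \in \calM$ is bounded and the additional hypothesis forces $|\sigma(o,x_n)| \to \infty$ in such a way that \eqref{eq:78} could either be assumed or verified (the statement inherits it from the enclosing theorem). So Theorem \ref{thm:5} yields
\[
\frac{k(n;y,x_n)}{k(n;o,x_n)} = \frac{\mathbf{\Phi}(\sigma(y,x_n))}{\mathbf{\Phi}(\sigma(o,x_n))}\Big(1 + \calO\big(|\sigma(o,x_n)|/n\big)\Big),
\]
where the error tends to zero by hypothesis.

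Next, I apply Proposition \ref{prop:1} to both $\sigma(y,x_n)$ and $\sigma(o,x_n)$. By \eqref{eq:11}, $|\sprod{\sigma(y,x_n)}{\alpha} - \sprod{\sigma(o,x_n)}{\alpha}| \leqslant |\alpha|\,|\sigma(o,y)|$, so since $\sprod{\sigma(o,x_n)}{\alpha} \to \infty$ for every $\alpha \in \Phi^+$, the same holds for $\sprod{\sigma(y,x_n)}{\alpha}$, and moreover the ratios
\[
\frac{\sprod{\alpha}{\sigma(y,x_n)}}{\sprod{\alpha}{\sigma(o,x_n)}} \longrightarrow 1 \qquad \text{for each } \alpha \in \Phi^{++}.
\]
Thus by the sharp asymptotic of Proposition \ref{prop:1},
\[
\frac{\mathbf{\Phi}(\sigma(y,x_n))}{\mathbf{\Phi}(\sigma(o,x_n))} = \frac{\chi_0(\sigma(y,x_n))^{-1/2}}{\chi_0(\sigma(o,x_n))^{-1/2}} \prod_{\alpha \in \Phi^{++}} \frac{\sprod{\alpha}{\sigma(y,x_n)}}{\sprod{\alpha}{\sigma(o,x_n)}} (1+o(1)) = \chi_0^{-1/2}\bigl(\sigma(y,x_n)-\sigma(o,x_n)\bigr)\bigl(1+o(1)\bigr),
\]
using the multiplicativity of $\chi_0$.

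To conclude, I need to convert the multiplicative $(1+o(1))$ factor into an additive $o(1)$. This is legitimate precisely because the prefactor $\chi_0^{-1/2}(\sigma(y,x_n)-\sigma(o,x_n)) = e^{-\sprod{\eta}{\sigma(y,x_n)-\sigma(o,x_n)}}$ is bounded uniformly in $n$: indeed, \eqref{eq:14} gives $|\sigma(y,x_n)-\sigma(o,x_n)| \leqslant |\sigma(o,y)|$, so by Cauchy--Schwarz $|\sprod{\eta}{\sigma(y,x_n)-\sigma(o,x_n)}| \leqslant |\eta|\,|\sigma(o,y)|$, a constant depending only on $y$. Combining this with the error from Theorem \ref{thm:5} (also $o(1)$) gives the claimed expression.

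The substantive work has already been done in Theorem \ref{thm:5} and Proposition \ref{prop:1}; the only mild subtlety I anticipate is the bookkeeping in the last paragraph, namely verifying that all multiplicative errors $1+o(1)$ can be absorbed as additive $o(1)$ terms, which hinges on the uniform boundedness of $\chi_0^{-1/2}(\sigma(y,x_n)-\sigma(o,x_n))$ via the Lipschitz-type estimate \eqref{eq:14}. No further input beyond the results already stated is required.
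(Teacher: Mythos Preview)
Your proof is correct and follows essentially the same route as the paper's own argument: invoke Theorem \ref{thm:5}, use \eqref{eq:11} to transfer the hypothesis $\sprod{\sigma(o,x_n)}{\alpha}\to\infty$ to $\sprod{\sigma(y,x_n)}{\alpha}\to\infty$, apply Proposition \ref{prop:1} to the ratio $\mathbf{\Phi}(\sigma(y,x_n))/\mathbf{\Phi}(\sigma(o,x_n))$, and then use \eqref{eq:14} to bound $|\sigma(y,x_n)-\sigma(o,x_n)|$ so that the multiplicative error becomes additive. The only cosmetic difference is that the paper writes the product over $\Phi^+$ with factors $(1+\sprod{\sigma(\cdot,x_n)}{\alpha})$ rather than your $\sprod{\alpha}{\sigma(\cdot,x_n)}$ over $\Phi^{++}$, but this is immaterial once each $\sprod{\sigma(o,x_n)}{\alpha}\to\infty$.
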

\begin{proof}
	In view of \eqref{eq:11}, for each $\alpha \in \Phi^+$,
	\[
		\lim_{n \to \infty} \sprod{\sigma(y, x_n)}{\alpha} = \infty.
	\]
	By Proposition \ref{prop:1}, we have
	\[
		\frac{\mathbf{\Phi}(\sigma(y, x_n))}{\mathbf{\Phi}(\sigma(o, x_n))}
		=
		\chi_{0}^{-\frac{1}{2}}(\sigma(y,x_n)-\sigma(o,x_n))
		\bigg(\prod_{\alpha \in \Phi^+}
		\frac{1 + \sprod{\sigma(y, x_n)}{\alpha}}{1+ \sprod{\sigma(o, x_n)}{\alpha}}\bigg)(1+o(1)).
	\]
	By \eqref{eq:14}, for each $\alpha \in \Phi^+$,
	\[
		\frac{1 + \sprod{\sigma(y, x_n)}{\alpha}}{1+ \sprod{\sigma(o, x_n)}{\alpha}} - 1
		=
		\frac{\sprod{\sigma(y, x_n) - \sigma(o, x_n)}{\alpha}}{1+ \sprod{\sigma(o, x_n)}{\alpha}}
		=
		\calO(\sprod{\sigma(o, x_n)}{\alpha}^{-1}),
	\]
	thus to complete the proof it is enough to invoke Theorem \ref{thm:5}.
\end{proof}

\section{Asymptotic behavior of caloric functions}
\label{sec:4}
In this section we discuss the $\ell^p$ asymptotic behavior of caloric functions on affine buildings. We first introduce 
appropriate mass functions. We next prove our main result, which concerns the $\ell^p$ asymptotic behavior of caloric 
functions coming from compactly supported initial data. Lastly, we extend this result to further classes of initial data.

\subsection{Helgason transform}
Given a finitely supported function $f$ on good vertices $V_g$, its Helgason transform is defined by the formula
\[
	\calH f(z, \omega) = \sum_{y \in V_g} f(y) \chi_0(h(o, y; \omega))^{\frac{1}{2}} e^{-\sprod{z}{h(o, y; \omega)}},
	\qquad (z, \omega) \in \mathfrak{a}_\CC \times \Omega.
\]
In the following propositions we consider radial functions on $V_g$. Let us recall that $f$ is called \emph{radial} if
for all good vertices $x$ and $y$, if $\sigma(o, x)=\sigma(o, y)$, then $f(x)=f(y)$. If $f$ is radial we often identify it
with its profile $\tilde{f}$, where $\tilde{f}(\lambda) = f(x)$, for any $x \in V_\lambda(o)$.
\begin{proposition}
	\label{prop:4}
	For all $\lambda \in P^+$, $z \in \mathfrak{a}_\CC$ and $\omega \in \Omega$, and $x \in V_g$,
	\begin{equation}
		\label{eq:61}
		\sum_{y \in V_\lambda(x)} \chi_0(h(x, y; \omega))^{\frac12} e^{-\sprod{z}{h(x, y; \omega)}}
		=
		N_\lambda P_\lambda(-z).
	\end{equation}
\end{proposition}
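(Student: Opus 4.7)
The plan is to derive the identity from the integral representation \eqref{eq:9} of the Macdonald spherical function combined with an $\omega$-independence argument. Applying \eqref{eq:9} with $z$ replaced by $-z$, for any single $y \in V_\lambda(x)$ we have
\[
	P_\lambda(-z)
	= \int_\Omega \chi_0(h(x, y; \omega'))^{\frac12} e^{-\sprod{z}{h(x, y; \omega')}} \, \nu_x({\rm d}\omega').
\]
Summing this identity over the $N_\lambda$ vertices $y \in V_\lambda(x)$ and swapping the finite sum with the integral, we obtain
\[
	N_\lambda P_\lambda(-z) = \int_\Omega F(x, \omega') \, \nu_x({\rm d}\omega'),
	\qquad
	F(x, \omega) := \sum_{y \in V_\lambda(x)} \chi_0(h(x, y; \omega))^{\frac12} e^{-\sprod{z}{h(x, y; \omega)}},
\]
and $F(x, \omega)$ is precisely the left-hand side of \eqref{eq:61}. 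Since $\nu_x$ is a probability measure, the identity will follow once we prove that $F(x, \omega)$ is independent of $\omega \in \Omega$.

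To establish this $\omega$-independence, for a given $\omega$ I would pick an apartment $\scrA$ containing $x$ together with a sub-sector representing $\omega$, and use the retraction $r = r_{\omega, \scrA} : \scrX \to \scrA$ centred at $\omega$. Two features of $r$ are decisive: it is type-preserving, and it preserves Busemann values at $\omega$ based at points of $\scrA$, so $h(x, y; \omega) = h(x, r(y); \omega)$ for every $y \in V_g$. Partitioning $V_\lambda(x)$ by fibres of $r$ then rewrites
\[
	F(x, \omega) = \sum_{v \in r(V_\lambda(x))} \#\bigl(r^{-1}(v) \cap V_\lambda(x)\bigr) \,
	\chi_0(h(x, v; \omega))^{\frac12} e^{-\sprod{z}{h(x, v; \omega)}}.
\]
The regularity of $\scrX$ (the constants $q_i$) together with the standard combinatorial description of retraction fibres (cf.\ \cite[Proposition 1.5]{park2}) ensure that both the image $r(V_\lambda(x)) \subset \scrA$ and the multiplicities $\#(r^{-1}(v) \cap V_\lambda(x))$ depend only on $\lambda$ and on the position of $v$ in $\scrA$ relative to $x$, not on the choice of $\omega$. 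Hence $F(x, \omega)$ is constant in $\omega$.

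Combining these two steps yields $F(x, \omega) = N_\lambda P_\lambda(-z)$, which is the claimed identity. The principal obstacle is the second step, namely the verification that the retraction multiplicities are apartment- and $\omega$-independent; this rests on the equal-thickness hypothesis $q_i(c) = q_i$ and the classical theory of foldings, and it is ultimately what allows the averaging against $\nu_x$ in the first step to collapse a genuine integral to a pointwise identity.
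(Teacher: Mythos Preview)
Your proof is essentially the paper's own: both reduce the identity to showing that the left-hand side is independent of $\omega$, and then integrate against the probability measure $\nu_x$ and invoke the integral representation \eqref{eq:9}. The paper organises the $\omega$-independence step slightly more directly by partitioning $V_\lambda(x)$ according to the horocycle value $h(x,y;\omega)=\mu\in\Pi_\lambda$ rather than by retraction fibres, and cites \cite[Lemma~3.19]{park2} (not Proposition~1.5, which only gives the total count $N_\lambda$) for the fact that $\#\{y\in V_\lambda(x):h(x,y;\omega)=\mu\}$ does not depend on $\omega$.
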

\begin{proof}
	Let us observe that for each $\lambda \in P^+$ and $z \in \mathfrak{a}_\CC$, the sum
	\[
		\sum_{y \in V_\lambda(x)} \chi_0(h(x, y; \omega))^{\frac12} e^{-\sprod{z}{h(x, y; \omega)}}
	\]
	is independent of $\omega \in \Omega$. Indeed, we have
	\begin{equation}
		\label{eq:59}
		\sum_{y \in V_\lambda(x)} \chi_0(h(x, y; \omega))^{\frac12} e^{-\sprod{z}{h(x, y; \omega)}}
		=
		\sum_{\mu \in \Pi_\lambda} e^{\sprod{\eta-z}{\mu}} \#\{y \in V_\lambda(x) : h(x, y; \omega) = \mu\}.
	\end{equation}
	In view of \cite[Lemma 3.19]{park2}, the set
	\[
		\#\{y\in V_{\lambda}(x): h(x,y;\omega)=\mu\}
	\]
	is independent of $\omega \in \Omega$, thus the left-hand side of \eqref{eq:59}. Consequently,
	\begin{align*}
		\sum_{y \in V_\lambda(x)} \chi_0(h(x, y; \omega))^{\frac12} e^{-\sprod{z}{h(x, y; \omega)}}
		&=
		\int_\Omega
		\sum_{y \in V_\lambda(x)} \chi_0(h(x, y; \omega))^{\frac12} e^{-\sprod{z}{h(x, y; \omega)}}
		\: \nu_x({\rm d} \omega) \\
		&=
		\sum_{y \in V_\lambda(x)}
		\int_\Omega
		\chi_0 (h(x, y; \omega))^{\frac12} e^{-\sprod{z}{h(x, y; \omega)}}
		\: \nu_x({\rm d} \omega),
	\end{align*}
	which in view of the integral representation \eqref{eq:9} is equal to $N_\lambda P_\lambda(-z)$,
	proving \eqref{eq:61}.
\end{proof}
Thanks to Proposition \ref{prop:1}, the Helgason transform can be defined for radial functions in $\ell^1(V_g)$. Indeed,
if $z \in [-1, 1]\eta + i U_0$, and $\omega \in \Omega$,
\begin{align*}
	\sum_{y \in V_g} \Big| f(y) \chi_0(h(o, y; \omega))^{\frac12} e^{-\sprod{z}{h(o, y; \omega)}} \Big|
	&\leqslant
	\sum_{y \in V_g} |f(y)| \chi_0(h(o, y; \omega))^{\frac12} e^{-\sprod{\Re z}{h(o, y; \omega)}} \\
	&\leqslant
	\sum_{\lambda \in P^+} |f(\lambda)| \sum_{y \in V_\lambda(o)} \chi_0(h(o, y; \omega))^{\frac12} 
	e^{-\sprod{\Re z}{h(o, y; \omega)}} \\
	&=
	\sum_{\lambda \in P^+} |f(\lambda)| N_\lambda P_{\lambda}(-\Re z) \\
	&\leqslant
	\sum_{\lambda \in P^+} |f(\lambda)| N_\lambda = \|f\|_{\ell^1}
\end{align*}
where the last inequality follows by Lemma \ref{lem:2}. Consequently, if $f$ is radial and in $\ell^1(V_g)$
then for all $z \in [-1, 1]\eta + iU_0$ and $\omega \in \Omega$,
\begin{equation}
	\label{eq:74}
	|\calH f(z, \omega)| \leqslant \|f\|_{\ell^1}.
\end{equation}
Moreover, we have the following corollary.
\begin{corollary}
	\label{cor:3}
	Let $f$ be a radial function belonging to $\ell^1(V_g)$. If $z \in [-1, 1]\eta + iU_0$ and $\omega \in \Omega$, then
	\[
		\calH f(z, \omega)
		=
		\sum_{y \in V_g} f(y) \vphi_{-z}(y).
	\]
\end{corollary}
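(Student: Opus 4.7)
The plan is to unwind the definition of $\calH f$, exploit radiality of $f$ to reorganize the sum by the level sets $V_\lambda(o)$ of $\sigma(o, \cdot)$, and then apply Proposition \ref{prop:4} on each level set.

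First I would fix $z \in [-1,1]\eta + iU_0$ and $\omega \in \Omega$, and note that the absolute convergence estimate established in the paragraph immediately before the corollary shows
\[
	\sum_{y \in V_g} \bigl| f(y) \chi_0(h(o, y; \omega))^{\frac12} e^{-\sprod{z}{h(o, y; \omega)}} \bigr| \leqslant \|f\|_{\ell^1} < \infty,
\]
which licenses any rearrangement below. Next, partition $V_g = \bigsqcup_{\lambda \in P^+} V_\lambda(o)$, so that
\[
	\calH f(z, \omega) = \sum_{\lambda \in P^+} \sum_{y \in V_\lambda(o)} f(y) \chi_0(h(o,y;\omega))^{\frac12} e^{-\sprod{z}{h(o,y;\omega)}}.
\]
Since $f$ is radial, $f(y) = \tilde{f}(\lambda)$ for every $y \in V_\lambda(o)$, and can be pulled out of the inner sum. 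Then by Proposition \ref{prop:4} with $x = o$,
\[
	\sum_{y \in V_\lambda(o)} \chi_0(h(o,y;\omega))^{\frac12} e^{-\sprod{z}{h(o,y;\omega)}} = N_\lambda P_\lambda(-z).
\]
Using the definition \eqref{eq:60} of the spherical function $\vphi_{-z}$, which takes the constant value $P_\lambda(-z)$ on $V_\lambda(o)$, this equals $\sum_{y \in V_\lambda(o)} \vphi_{-z}(y)$, because $\#V_\lambda(o) = N_\lambda$.

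Substituting back gives
\[
	\calH f(z, \omega) = \sum_{\lambda \in P^+} \tilde{f}(\lambda) \sum_{y \in V_\lambda(o)} \vphi_{-z}(y) = \sum_{\lambda \in P^+} \sum_{y \in V_\lambda(o)} f(y) \vphi_{-z}(y) = \sum_{y \in V_g} f(y) \vphi_{-z}(y),
\]
with the final reassembly again justified by absolute convergence (the sum $\sum_\lambda |\tilde{f}(\lambda)| N_\lambda P_\lambda(-\Re z) \leqslant \|f\|_{\ell^1}$ by Lemma \ref{lem:2} exactly as in the estimate preceding the corollary). There is no real obstacle here; the only mildly delicate point is the absolute convergence required to split the defining sum of $\calH f$ by level sets and to reassemble at the end, and this has in effect already been carried out in the display that precedes the statement.
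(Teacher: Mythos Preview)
Your proof is correct and is exactly the argument the paper has in mind: the corollary is stated without proof immediately after the absolute-convergence computation, and your steps (partition into level sets, pull out the radial profile, apply Proposition~\ref{prop:4} with $x=o$, and identify $N_\lambda P_\lambda(-z)$ with $\sum_{y\in V_\lambda(o)}\vphi_{-z}(y)$) simply make that implicit deduction explicit.
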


In view of Corollary \ref{cor:3}, if $f$ is a radial function from $\ell^1(V_g)$ and $z \in [-1, 1]\eta + iU_0$, then 
the Helgason transform $\calH f(z, \cdot)$ is constant on $\Omega$, thus we often just write $\calH f(z)$.

Next, let us observe that for any two good vertices $x, y \in V_g$, by \eqref{eq:9} and \eqref{eq:52}, we have
\begin{align}
	\nonumber
	P_{\sigma(x, y)}(z)
	&= \int_\Omega \chi_0\big(h(x, y; \omega)\big)^{\frac12} e^{\sprod{z}{h(x, y; \omega)}} \nu_x({\rm d} \omega) \\
	\nonumber
	&= \int_\Omega \chi_0\big(h(o, x; \omega)\big)^{-\frac12} e^{-\sprod{z}{h(o, x; \omega)}} 
	\chi_0\big(h(o, y; \omega)\big)^{\frac12} e^{\sprod{z}{h(o, y; \omega)}} \, \nu_x({\rm d} \omega) \\
	\label{eq:2}
	&=
	\int_\Omega \chi_0\big(h(o, x; \omega)\big)^{\frac12} e^{-\sprod{z}{h(o, x; \omega)}}
    \chi_0\big(h(o, y; \omega)\big)^{\frac12} e^{\sprod{z}{h(o, y; \omega)}} \, \nu({\rm d} \omega)
\end{align}
where in the last equality we have also used \eqref{eq:15}.

Let $f$ be a radial function in $\ell^p(V_g)$, $p \geqslant 1$. Then for each $g \in \ell^{p'}(V_g)$, we can 
define the convolution $f \times g$ by the formula
\[
	f \times g(x) = \sum_{y \in V_g} f(y) g(\sigma(y, x)), \qquad x \in V_g.
\]
Let us observe that if both $f$ and $g$ are radial then $f \times g = g \times f$. Indeed, setting 
$\lambda^\star = -w_0. \lambda$, we can write
\begin{align*}
	f \times g(x) 
	&= \sum_{\lambda \in P^+} \sum_{\mu \in P^+} f(\lambda) g(\mu) \#\big(V_{\mu^\star}(x) \cap V_\lambda(o)\big)
\intertext{and since the building is strongly vertex regular, \cite[Theorem 5.21]{park3}}
	&= \sum_{\lambda \in P^+} \sum_{\mu \in P^+} f(\lambda) g(\mu) \#\big(V_{\mu}(o) \cap V_{\lambda^\star}(x)\big) \\
	&= g \times f(x).
\end{align*}
Let us recall the definition of the Plancherel measure \eqref{eq:80} and \eqref{eq:81}.
\begin{theorem}
	\label{thm:12}
	If $f$ and $g$ are finitely supported complex-valued functions on $V_g$, then 
	\[
		\sum_{y \in V_g} f(y) \overline{g(y)} =
		\int_{\scrD} \int_\Omega \calH f(i z, \omega) \calH \overline{g} (-i z, \omega)
		\nu({\rm d} \omega) \pi({\rm d} z).
	\]
\end{theorem}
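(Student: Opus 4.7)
The plan is to unfold the Helgason transforms on the right-hand side, interchange the order of summation and integration via Fubini, and perform the inner integral over $\Omega$ first. Fubini's theorem applies without subtlety here because $f$ and $g$ have finite support, so the remaining sums are finite and the integrands are bounded on the compact sets $\Omega$ and $\scrD$. After substitution, the integrand under $\int_\scrD \int_\Omega$ reads
\[
\sum_{x, y \in V_g} f(y)\, \overline{g(x)}\, \chi_0(h(o, x; \omega))^{\frac{1}{2}} \chi_0(h(o, y; \omega))^{\frac{1}{2}}\, e^{\sprod{iz}{h(o, x; \omega)}}\, e^{-\sprod{iz}{h(o, y; \omega)}}.
\]
I would then recognize the inner $\omega$-integral of each term as the right-hand side of \eqref{eq:2} with the parameter $z$ there replaced by $-iz$, so it collapses to $P_{\sigma(x, y)}(-iz)$. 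The right-hand side of the theorem thereby reduces to
\[
\sum_{x, y \in V_g} f(y)\, \overline{g(x)} \int_\scrD P_{\sigma(x, y)}(-iz)\, \pi({\rm d}z).
\]

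The heart of the argument is then the orthogonality identity
\[
\int_\scrD P_\mu(-iz)\, \pi({\rm d}z) = \delta_{\mu, 0}, \qquad \mu \in P^+,
\]
because plugging it into the previous display and using $\sigma(x, y) = 0 \Leftrightarrow x = y$ collapses the double sum to $\sum_y f(y) \overline{g(y)}$, which is the left-hand side. To establish this orthogonality I would invoke \eqref{eq:21} applied to $A = A_\mu$ with $x = y$, so that $\sigma(x, y) = 0$ and $\overline{P_0} \equiv 1$, yielding $\int_\scrD P_\mu(iz)\, \pi({\rm d}z) = (A_\mu \delta_x)(x) = \delta_{\mu, 0}$. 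Passing from $P_\mu(iz)$ to $P_\mu(-iz)$ under the integral is the only delicate step. In the standard case $\scrD = U_0$ consists of real points and $\pi$ is a positive real measure, so $P_\mu(-iz) = \overline{P_\mu(iz)}$ on $\scrD$ by the reality of the coefficients in the expansion of $P_\mu$, and taking complex conjugates of the preceding identity does the job. In the exceptional case $\Phi = \text{BC}_r$, the finite Weyl group $W$ contains $-\mathrm{id}$, so $W$-invariance of $P_\mu$ forces $P_\mu(-iz) = P_\mu(iz)$ identically, and the identity transfers directly.

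The main obstacle I anticipate is the non-real stratum $U_1 \subset \scrD$ in the exceptional case, where $\pi$ has a complex density and the naive complex-conjugation argument breaks down; the rescue is precisely the special structural feature of $\text{BC}_r$ that $-\mathrm{id}$ lies in its finite Weyl group, which together with the $W$-invariance of Macdonald's spherical functions supplies the missing symmetry. Beyond this one point, the remainder of the argument is a formal Fubini-type reassembly of the three ingredients already in hand: the definition of the Helgason transform, the integral representation \eqref{eq:2}, and the inversion formula \eqref{eq:21}.
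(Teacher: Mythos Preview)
Your argument is correct and follows essentially the same path as the paper's proof: expand the product of Helgason transforms, recognize the $\omega$-integral as a Macdonald spherical function via the integral representation (the paper uses the cocycle relation \eqref{eq:52} together with \eqref{eq:15} and \eqref{eq:9} explicitly, which is exactly how \eqref{eq:2} was derived, and thereby lands on $P_{\sigma(y,y')}(iz)$ rather than your $P_{\sigma(x,y)}(-iz)$), and then invoke the orthogonality $\int_{\scrD}P_\mu(iz)\,\pi({\rm d}z)=\delta_{\mu,0}$ (the paper cites \cite[Lemma~6.1]{park2}, which is equivalent to your specialization of \eqref{eq:21}). Your sign-flip detour is correct but self-inflicted: applying \eqref{eq:2} with the roles of $x$ and $y$ interchanged yields $P_{\sigma(y,x)}(iz)$ directly, matching the paper and making the case analysis for $-iz$ versus $iz$ unnecessary.
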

\begin{proof}
	Given $z \in \scrD$ and $\omega \in \Omega$, we can write
	\begin{align*}
		&\calH f(i z, \omega) \calH \overline{g}( -i z, \omega) \\
		&\qquad\qquad=
		\sum_{y, y' \in V_g} f(y) \overline{g(y')} 
		\chi_0(h(o, y; \omega))^{\frac12} \chi_0(h(o, y'; \omega))^{\frac12} e^{-\sprod{i z}{h(o, y; \omega)}}
		e^{\sprod{i z}{h(o, y'; \omega)}} \\
		&\qquad\qquad=
		\sum_{y, y' \in V_g} f(y) \overline{g(y')}
		\chi_0(h(o, y; \omega)) \chi_0(h(y, y'; \omega))^{\frac12}
		e^{\sprod{iz}{h(y, y'; \omega)}} 
	\end{align*}
	where we have used the horocycle relation \eqref{eq:52}. Hence, by \eqref{eq:15} and \eqref{eq:9}, we get
	\begin{align*}
		&\int_\Omega \calH f(iz, \omega) \calH \overline{g}( -i z, \omega) \nu({\rm d} \omega) \\
		&\qquad\qquad=
		\sum_{y, y' \in V_g} f(y) \overline{g(y')}
		\int_\Omega \chi_0(h(o, y; \omega)) \chi_0(h(y, y'; \omega))^{\frac12} e^{\sprod{i z}{h(y, y'; \omega)}}
		\nu_o({\rm d} \omega) \\
		&\qquad\qquad=
		\sum_{y, y' \in V_g} f(y) \overline{g(y')}
		\int_\Omega \chi_0(h(y, y'; \omega))^{\frac12} e^{\sprod{i z}{h(y, y'; \omega)}} \nu_y({\rm d} \omega) \\
		&\qquad\qquad=
		\sum_{y, y' \in V_g} f(y) \overline{g(y')} P_{\sigma(y, y')}(i z).
	\end{align*}
	Consequently, by \cite[Lemma 6.1]{park2}
	\begin{align*}
		\int_{\scrD} \int_\Omega \calH f(i z, \omega) \calH \overline{g}(i z, \omega) \nu({\rm d} \omega)
		\pi({\rm d} z)
		&=
		\sum_{y, y' \in V_g} f(y) \overline{g(y')}
		\int_\scrD P_{\sigma(y, y')}(i z) \pi({\rm d} z) \\
		&=
		\sum_{y \in V_g} f(y) \overline{g(y)}
	\end{align*}
	which completes the proof. 
\end{proof}

In view of Theorem \ref{thm:12}, the Helgason transform can be defined for $f \in \ell^2(V_g)$ as a limit in 
$L^2(\scrD \times \Omega, \pi \times \nu)$ of $\calH f_n$ where $(f_n)$ is any sequence of finitely supported functions
convergent in $\ell^2(V_g)$ to $f$. 

\begin{proposition}
	\label{prop:3}
	Let $K$ be a radial function in $\ell^1(V_g)$. Then for each $f \in \ell^2(V_g)$, we have
	\[
		\calH (f \times K)(z, \omega) = \calH K(z) \calH f(z, \omega)
	\]
	for all $z \in \scrD$ and $\omega \in \Omega$.
\end{proposition}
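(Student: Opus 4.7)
The plan is to prove the identity first for finitely supported $f$ by a direct computation exploiting the cocycle relation for the Busemann function and Proposition~\ref{prop:4}, then to extend to general $f \in \ell^2(V_g)$ by a standard density and continuity argument based on Theorem~\ref{thm:12} and Herz's principe de majoration.

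For finitely supported $f$, I would unfold the definition of convolution and write
\[
	\calH(f \times K)(z,\omega) = \sum_{y \in V_g} \sum_{x \in V_g} f(y) K(\sigma(y,x)) \chi_0(h(o,x;\omega))^{\frac{1}{2}} e^{-\sprod{z}{h(o,x;\omega)}}.
\]
The critical step is to invoke the cocycle relation \eqref{eq:52}, giving $h(o,x;\omega) = h(o,y;\omega) + h(y,x;\omega)$, together with the multiplicativity of $\chi_0$ visible from \eqref{eq:10}, in order to factor the summand into one part depending only on $y$ and another depending on $x$ relative to $y$. After swapping the order of summation and grouping the inner sum by $\lambda = \sigma(y,x) \in P^+$, the inner sum over $x \in V_\lambda(y)$ is exactly the left-hand side of \eqref{eq:61}, and Proposition~\ref{prop:4} collapses it to $N_\lambda P_\lambda(-z)$, which depends neither on $y$ nor on $\omega$. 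Summing over $\lambda$ with weight $K(\lambda)$ and identifying $\sum_{\lambda \in P^+} K(\lambda) N_\lambda P_\lambda(-z)$ with $\calH K(z)$ via Corollary~\ref{cor:3} and the definition \eqref{eq:60} of $\vphi_{-z}$ yields the claimed pointwise identity for finitely supported $f$.

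For general $f \in \ell^2(V_g)$, I would approximate by a sequence of finitely supported functions $f_n \to f$ in $\ell^2$. By Theorem~\ref{thm:12}, the Helgason transform extends to a (constant-normalized) isometry from $\ell^2(V_g)$ into $L^2(\scrD \times \Omega, \pi \otimes \nu)$, so $\calH f_n \to \calH f$ in $L^2$. The radial multiplier $\calH K$ is bounded by $\|K\|_{\ell^1}$ thanks to \eqref{eq:74} combined with Corollary~\ref{cor:3}, hence $\calH K \cdot \calH f_n \to \calH K \cdot \calH f$ in $L^2$. On the other hand, Herz's principe de majoration for radial $\ell^1$ kernels (Appendix~\ref{sec:8}) guarantees that $g \mapsto g \times K$ is bounded on $\ell^2(V_g)$, so $f_n \times K \to f \times K$ in $\ell^2$, and therefore $\calH(f_n \times K) \to \calH(f \times K)$ in $L^2$. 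Passing to the limit in the identity for $f_n$ gives $\calH(f \times K) = \calH K \cdot \calH f$ in $L^2(\scrD \times \Omega)$, that is, $\pi \otimes \nu$-almost everywhere, which is the sense in which the pointwise formulation should be read beyond the finitely supported case.

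The main obstacle is the extension from finitely supported $f$ to $f \in \ell^2(V_g)$: one needs $f \times K$ to be a well-defined element of $\ell^2(V_g)$ and the convolution operator $g \mapsto g \times K$ to be continuous on $\ell^2$. This is precisely the content of Herz's majoration principle, which is why the authors have recalled it in the appendix; without it, even the left-hand side of the identity would not be defined. Once that bound is in place, the remainder is a routine continuity argument using the Plancherel isometry of Theorem~\ref{thm:12} and the pointwise bound on $\calH K$.
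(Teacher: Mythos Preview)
Your argument for finitely supported $f$ via the cocycle relation and Proposition~\ref{prop:4} is exactly the paper's computation. The density extension is also the paper's strategy, but there is a real problem with your choice of tool: you invoke Herz's \emph{principe de majoration} from Appendix~\ref{sec:8} to get the $\ell^2$-boundedness of $g \mapsto g \times K$, yet Theorem~\ref{thm:A:2} is stated only for Bruhat--Tits buildings, and the paper explicitly notes before Theorem~\ref{thm:9} that Herz's principle is not available in the exotic case. Since Proposition~\ref{prop:3} is asserted for arbitrary affine buildings, your proof as written does not cover the full statement.

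The fix is elementary and is what the paper does: for radial $K \in \ell^1(V_g)$ one has the Schur-type bound
\[
	\big|\sprod{f \times K}{g}\big| \leqslant \sum_{x,y} \big(|f(y)|^2 |K(\sigma(y,x))|\big)^{1/2}\big(|g(x)|^2 |K(\sigma(y,x))|\big)^{1/2}
	\leqslant \|K\|_{\ell^1}\|f\|_{\ell^2}\|g\|_{\ell^2},
\]
which needs no group action. With this in place your density argument goes through unchanged. A second, more minor point: the paper first proves the identity for \emph{compactly supported} $K$ and only then passes to radial $K \in \ell^1(V_g)$ by a further approximation; you treat general $K \in \ell^1$ from the start, which is fine provided you justify the interchange of the infinite sum over $\lambda$ with the identification via Corollary~\ref{cor:3} (the bound~\eqref{eq:74} does this on $[-1,1]\eta + iU_0$, but you should check the exceptional component of $\scrD$ as well).
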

\begin{proof}
	Suppose that $K$ is compactly supported. Then by Proposition \ref{prop:4}, for each $y \in V_g$ we have
	\begin{align*}
		\sum_{x \in V_g} K(\sigma(y, x)) \chi_0(h(y, x;\omega))^{\frac{1}{2}}
		e^{-\sprod{z}{h(y, x; \omega)}}
		&=
		\sum_{\lambda \in P^+} K(\lambda) 
		\sum_{x \in V_{\lambda}(y)} \chi_0(h(y, x;\omega))^{\frac{1}{2}} e^{-\sprod{z}{h(y, x; \omega)}} \\
		&=
		\sum_{\lambda \in P^+} K(\lambda) N_\lambda P_\lambda(-z) \\
		&=
		\sum_{x \in V_g} K(x) \vphi_{-z}(x) = \calH K(z)
	\end{align*}
	where in the last equality we have used Corollary \ref{cor:3}. Therefore, for each $f$ a compactly supported function 
	on $V_g$, $z \in \mathfrak{a}_\CC$ and $\omega \in \Omega$, we have
	\begin{align*}
		\calH (f \times K)(z, \omega) 
		&= \sum_{x \in V_g} (f \times K)(x) 
		\chi_0(h(o,x;\omega))^{\frac{1}{2}}e^{-\sprod{z}{h(o, x; \omega)}} \\
		&= \sum_{x \in V_g} \sum_{y \in V_g} f(y) K(\sigma(y, x)) 
		\chi_0(h(o,x;\omega))^{\frac{1}{2}}e^{-\sprod{z}{h(o, x; \omega)}} \\
		&=
		\sum_{y \in V_g} f(y) \chi_0(h(o,y;\omega))^{\frac{1}{2}} e^{-\sprod{z}{h(o, y; \omega)}}
		\sum_{x \in V_g} K(\sigma(y, x)) \chi_0(h(y, x;\omega))^{\frac{1}{2}} 
		e^{-\sprod{z}{h(y, x; \omega)}} \\
		&=
		\calH K(z) \calH f(z, \omega).
	\end{align*}
	Finally, let us observe that $f \times K \in \ell^2(V_g)$. Indeed, for any $g \in \ell^2(V_g)$, we can write
	\begin{align*}
		\sprod{f \times K}{g} 
		&\leqslant
		\sum_{x \in V_g} \sum_{y \in V_g} |f(y)| |K(\sigma(y, x))| |g(x)| \\
		&=
		\sum_{x \in V_g} \sum_{y \in V_g} \big(|f(y)|^2 |K(\sigma(y, x))|\big)^{\frac12} 
		\big(|g(x)|^2 |K(\sigma(y, x))|\big)^{\frac12} \\
		&\leqslant
		\|f\|_{\ell^2} \|g\|_{\ell^2} \|K\|_{\ell^1}.
	\end{align*}
	Hence, by a density argument, we complete the proof in the case where $K$ is radial and compactly supported. This result can 
	then be extended to all radial functions in $\ell^1(V_g)$ another application of a density argument.
\end{proof}

\begin{proposition}
	\label{prop:2}
	Let $f$ be a finitely supported function on $V_g$. Then for each $z \in \CC^r$,
	\begin{equation}
		\label{eq:53}
		(f \times \vphi_{-z})(x)
		=
		\int_{\Omega} \chi_0(h(o,x;\omega))^{\frac{1}{2}}e^{\sprod{z}{h(o, x; \omega)}}\calH f(z, \omega)
		\nu(\mathrm{d}\omega), \qquad x \in V_g.
	\end{equation}
	Moreover, if $f$ is a radial function belonging to $\ell^1(V_g)$ then for each $z \in [-1, 1]\eta + iU_0$,
	\[
		f \times \vphi_{-z} (x) = \vphi_z (x) \calH f(z), \qquad x \in V_g.
	\]
\end{proposition}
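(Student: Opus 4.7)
The first identity is essentially a direct computation, while the second follows by density from the first together with the fact that the Helgason transform of a radial $\ell^1$ function is independent of $\omega$ (Corollary \ref{cor:3}).

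For the first identity, I would start by noting that since $\vphi_{-z}$ is spherical, $\vphi_{-z}(\sigma(y,x)) = P_{\sigma(y,x)}(-z)$ for every $y \in V_g$. Applying the integral representation \eqref{eq:2}, one expresses $P_{\sigma(y,x)}(-z)$ as an integral over $\Omega$ whose integrand factors cleanly into separate contributions from $h(o,y;\omega)$ and $h(o,x;\omega)$. Since $f$ has finite support, Fubini permits the exchange of the (finite) sum over $y$ with the integral over $\Omega$: the $y$-sum assembles into the Helgason transform $\calH f(z,\omega)$, while the $x$-factor $\chi_0(h(o,x;\omega))^{1/2}\, e^{\sprod{z}{h(o,x;\omega)}}$ pulls outside, yielding \eqref{eq:53}.

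For the second identity, assume $f$ is radial in $\ell^1(V_g)$ and $z \in [-1,1]\eta + iU_0$. The integral representation \eqref{eq:9} together with Lemma \ref{lem:2} gives the uniform bound
\[
	|\vphi_{-z}(\sigma(y,x))| = |P_{\sigma(y,x)}(-z)| \leqslant P_{\sigma(y,x)}(-\Re z) \leqslant P_{\sigma(y,x)}(\eta) = 1,
\]
so $(f \times \vphi_{-z})(x)$ converges absolutely with value bounded by $\|f\|_{\ell^1}$. Truncating to the radial, finitely supported approximants $f_n = f \cdot \ind{|\sigma(o,\cdot)| \leqslant n}$ and applying the first identity gives
\[
	(f_n \times \vphi_{-z})(x) = \int_\Omega \chi_0(h(o,x;\omega))^{\frac{1}{2}}\, e^{\sprod{z}{h(o,x;\omega)}}\, \calH f_n(z,\omega)\,\nu({\rm d}\omega).
\]
By Corollary \ref{cor:3}, $\calH f_n(z,\omega) = \calH f_n(z)$ is constant in $\omega$ and pulls out of the integral; what remains is exactly $\vphi_z(x)$, by the integral representation \eqref{eq:9}. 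Dominated convergence (using the uniform pointwise bound above and \eqref{eq:74}) then transfers the identity $(f_n \times \vphi_{-z})(x) = \vphi_z(x)\,\calH f_n(z)$ from $f_n$ to $f$.

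The main obstacle, modest though it is, lies in book-keeping the sign conventions in \eqref{eq:2} and in the definition of $\calH f$ so that the $y$-sum produced by the interchange is indeed $\calH f(z,\omega)$; beyond this check, the argument reduces to substitution, Fubini, and a routine truncation-plus-dominated-convergence step.
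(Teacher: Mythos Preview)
Your proposal is correct and follows essentially the same path as the paper. The first identity is proved identically, via \eqref{eq:2} and swapping the finite sum with the integral. For the second identity the paper proceeds directly: having noted the same bound $|\vphi_{-z}|\leqslant 1$, it applies Fubini to the radial $\ell^1$ function itself, then invokes Corollary~\ref{cor:3} and \eqref{eq:9}; your truncation-plus-dominated-convergence route reaches the same conclusion but is a slightly longer detour around a Fubini step that is already justified.
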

\begin{proof}
	To prove \eqref{eq:53}, we use \eqref{eq:2} to write
	\begin{align*}
		(f \times \vphi_{-z})(x)
		&=
		\sum_{y \in V_g} f(y) P_{\sigma(y, x)}(-z) \\
		&=
		\int_{\Omega} \chi_0(h(o,x;\omega))^{\frac{1}{2}} e^{\sprod{z}{h(o, x; \omega)}} 
		\sum_{y\in V_g} f(y)  \chi_0\big(h(o, y; \omega)\big)^\frac{1}{2} e^{-\sprod{z}{h(o, y; \omega)}} \, 
		\nu(\mathrm{d}\omega)\\
		&=\int_{\Omega} \chi_0(h(o,x;\omega))^{\frac{1}{2}}e^{\sprod{z}{h(o, x; \omega)}}\calH f(z, \omega)
		\nu(\mathrm{d}\omega).
	\end{align*}
	Suppose that $f$ is a radial function belonging to $\ell^1(V_g)$. By Lemma \ref{lem:3}\eqref{en:2:2} and Lemma \ref{lem:2}, 
	if $z \in [-1, 1] \eta + iU_0$, and $y \in V_g$ then
	\[
		\big| \vphi_{-z}(y) \big| \leqslant \vphi_{-\Re z}(y) \leqslant 1.
	\]
	Thus $f \times \vphi_{-z}$ is well-defined. Now, in view of Fubini's theorem and Corollary \ref{cor:3},
	\begin{align*}
		(f \times \vphi_{-z})(x) 
		&= 
		\int_{\Omega} \chi_0(h(o,x;\omega))^{\frac{1}{2}}e^{\sprod{z}{h(o, x; \omega)}}\calH f(z) \nu(\mathrm{d}\omega) \\
		&=
		\calH f(z) \int_{\Omega} \chi_0(h(o,x;\omega))^{\frac{1}{2}} e^{\sprod{z}{h(o, x; \omega)}} \nu(\mathrm{d}\omega) \\
		&=
		\calH f(z) \vphi_z(x)
	\end{align*}
	where the last equality is a consequence of \eqref{eq:9}. This completes the proof.
\end{proof}

\begin{theorem}
	\label{thm:14}
	Let $f$ be a finitely supported function on $V_g$. Then for each $x \in V_g$,
	\[
		f(x) =
		\int_{\scrD}
		\int_{\Omega} 
		\chi_0(h(o,x; \omega))^{\frac{1}{2}}e^{i\sprod{z}{h(o, x; \omega)}}\calH f(iz, \omega)
        \nu(\mathrm{d}\omega)
		\pi({\rm d} z).
	\]
\end{theorem}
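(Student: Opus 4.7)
The plan is to deduce Theorem \ref{thm:14} directly from the Plancherel identity in Theorem \ref{thm:12} by choosing the auxiliary function to be a Dirac mass. Since both $f$ and the Dirac $\delta_x$ are finitely supported on $V_g$, Theorem \ref{thm:12} is directly applicable.

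More concretely, I would first compute the Helgason transform of $\delta_x$ from the definition: for every $z \in \mathfrak{a}_\CC$ and $\omega \in \Omega$,
\[
    \calH \delta_x(z, \omega) = \chi_0\big(h(o,x;\omega)\big)^{\frac{1}{2}} e^{-\sprod{z}{h(o,x;\omega)}}.
\]
Taking complex conjugates of a real-valued function has no effect, so $\overline{\delta_x} = \delta_x$; evaluating the transform at the point $-iz$ gives
\[
    \calH \overline{\delta_x}(-iz, \omega) = \chi_0\big(h(o,x;\omega)\big)^{\frac{1}{2}} e^{i\sprod{z}{h(o,x;\omega)}}.
\]
On the other hand, the left-hand side of the Plancherel formula in Theorem \ref{thm:12}, with $g$ replaced by $\delta_x$, collapses to the point evaluation
\[
    \sum_{y \in V_g} f(y) \overline{\delta_x(y)} = f(x).
\]

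Substituting these two observations into the identity of Theorem \ref{thm:12} yields
\[
    f(x) = \int_{\scrD} \int_\Omega \calH f(iz, \omega)\, \chi_0\big(h(o,x;\omega)\big)^{\frac{1}{2}} e^{i\sprod{z}{h(o,x;\omega)}}\, \nu({\rm d}\omega)\, \pi({\rm d} z),
\]
which is exactly the claimed inversion formula. There is no serious obstacle here: the hard work has already been absorbed into the proof of Theorem \ref{thm:12} (via the characterization of multiplicative functionals of $\scrA_2$ through Macdonald spherical functions and the spectral decomposition \eqref{eq:21}). The only thing to check is that the integral on the right converges absolutely so that Fubini is not required in a delicate way; since $f$ is finitely supported, $\calH f(iz,\omega)$ is a finite sum of bounded exponentials in $(z,\omega)$ over the compact domain $\scrD \times \Omega$, and the measures $\pi$ and $\nu$ are finite, so absolute convergence is automatic.
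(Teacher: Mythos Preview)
Your proposal is correct and considerably shorter than the paper's own argument; the two routes are genuinely different.

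The paper does \emph{not} invoke Theorem~\ref{thm:12}. Instead, given $y\in V_g$, it radializes $f$ about $y$, i.e.\ sets $f_{\mathrm{rad}}(x)=N_\lambda^{-1}\sum_{y'\in V_\lambda(y)}f(y')$ for $x\in V_\lambda(o)$, and then computes $\calH f_{\mathrm{rad}}(iz)$ in two ways: once via Corollary~\ref{cor:3}, and once via Proposition~\ref{prop:4} and Proposition~\ref{prop:2}, obtaining
\[
	\calH f_{\mathrm{rad}}(iz)
	=\int_\Omega \chi_0(h(o,y;\omega))^{1/2}e^{i\sprod{z}{h(o,y;\omega)}}\,\calH f(iz,\omega)\,\nu(\mathrm{d}\omega).
\]
It then recovers $f(y)=f_{\mathrm{rad}}(o)$ from the radial inversion formula~\eqref{eq:21}. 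In other words, the paper derives Theorem~\ref{thm:14} directly from the scalar Plancherel inversion~\eqref{eq:21} rather than from the $\ell^2$ Plancherel identity.

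Your approach is cleaner because the heavy lifting (the cocycle relation, the change of measure~\eqref{eq:15}, and the orthogonality $\int_\scrD P_\lambda(iz)\,\pi(\mathrm{d}z)=\delta_{\lambda,0}$) has already been packaged into Theorem~\ref{thm:12}; plugging in $g=\delta_x$ then collapses everything at once. The paper's argument, by contrast, keeps Theorem~\ref{thm:14} logically independent of Theorem~\ref{thm:12} and exhibits the intermediate identity $\calH f_{\mathrm{rad}}(iz)=f\times\vphi_{-iz}(y)$, which connects the Helgason transform to spherical convolution. Your remark about absolute convergence is fine but unnecessary: no interchange of integrals is needed, since Theorem~\ref{thm:12} already delivers the iterated integral in the correct order.
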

\begin{proof}
	Given $y \in V_g$, we define a radial function on $V_g$ by the formula
	\[
		f_{\text{rad}}(x) = \frac{1}{N_\lambda} \sum_{y' \in V_{\lambda}(y)} f(y'),
		\qquad x\in V_\lambda(o).
	\]
	By Corollary \ref{cor:3}, for $z \in \scrD$,
	\[
		\calH f_{\text{rad}}(i z) = \sum_{x \in V_g} f_{\text{rad}}(x) \vphi_{-i z}(x).
	\]
	On the other hand, we have
	\begin{align*}
		\calH f_{\text{rad}}(i z)
		&= \sum_{x \in V_g} \frac{1}{N_{\sigma(o, x)}} \sum_{y' \in V_{\sigma(o, x)} (y)} f(y') 
		\int_{\Omega} \chi_0(h(o, x; \omega))^{\frac12} e^{-i\sprod{z}{h(o, x; \omega)}} \nu({\rm d} \omega) \\
		&= 
		\int_{\Omega} \sum_{y' \in V_g} f(y') 
		\frac{1}{N_{\sigma(y, y')}}
		\sum_{x \in V_{\sigma(y, y')}(o)}
		\chi_0(h(o, x; \omega))^{\frac12} e^{-i\sprod{z}{h(o, x; \omega)}} \nu({\rm d} \omega), \\
	\intertext{thus by Proposition \ref{prop:4}}
		&=
		\int_{\Omega} \sum_{y' \in V_g} f(y') \vphi_{-iz}(\sigma(y, y')) \nu({\rm d} \omega) \\
		&=
		f \times \vphi_{-iz}(y).
	\end{align*}
	Now, by Proposition \ref{prop:2},
	\[
		\calH f_{\text{rad}}(iz)
		=
		\int_{\Omega} 
		\chi_0(h(o,y; \omega))^{\frac{1}{2}}e^{i\sprod{z}{h(o, y; \omega)}}\calH f(iz, \omega)
        \nu(\mathrm{d}\omega).
	\]
	Since $f_{\text{rad}}$ is radial and finitely supported by \eqref{eq:21} we have 
	\[
		f(y) = f_{\text{rad}}(o) = \int_\scrD \calH f_{\text{rad}}(iz) \pi({\rm d} z)
	\]
	and the theorem follows.
\end{proof}

\subsection{Mass functions}
\label{sec:6}
In this section we introduce the notion of mass functions for finitely supported initial datum. We discuss their properties 
and prove our main $\ell^p$ convergence result concerning caloric functions.

For each $f$ a finitely supported function on good vertices $V_g$, and $p \in [1, \infty]$, we define its $p$-mass by 
the formula 
\begin{subequations}
	\begin{align}
	\label{eq:63a}
	M_p(f)(x)
	&=
	\sum_{y \in V_g}
	f(y)\fint_{\Omega(o,x)} \chi_{0}(h(o,y;\omega))^\frac{1}{p} \: \nu({\rm d} \omega), &\text{if } p \in [1, 2),
	\intertext{and}
	\label{eq:63b}
	M_p(f)(x)
	&=\frac{1}{\vphi_0(x)} f \times \vphi_0(x), &\text{if } p \in [2, \infty],
	\end{align}
\end{subequations}
where for a Borel subset $A \subseteq \Omega$, we have set
\[
	\fint_A F(\omega) \: \nu({\rm d} \omega) = \frac{1}{\nu(A)} \int_A F(\omega) \: \nu({\rm d} \omega).
\]
In fact for $p = 2$ both definitions of a mass function yield strong $\ell^2$ convergence for caloric functions, see 
Remark \ref{rem:1}.

Let us observe that if $f$ is \emph{finitely} supported then $M_p(f)$ is bounded for each $p \in [1, \infty]$. To see this, 
let us assume that
\[
	\supp f \subseteq B_R = \big\{x \in V_g : \abs{\sigma(o, x)} < R \big\}.
\]
By the Kostant's convexity theorem, \cite{Kostant}, we have
\[
	|h(o,y; \omega)| \leqslant |\sigma(o,y)| < R
\]
for all $y \in B_R$ and $\omega \in \Omega$. Therefore, if $p \in [1, 2)$, then we have
\begin{align*}
	|M_p(f)(x)|
	&\leqslant 
	\sum_{y \in B_R} |f(y)|\, 
	\fint_{\Omega(o,x)} e^{\frac{2}{p}\langle \eta, h(o, y;\omega)\rangle}\, \nu(\textrm{d}\omega) \\
	&\leqslant
	e^{R |\eta| \frac{2}{p}} \|f\|_{\ell^1}.
\end{align*}
On the other hand, by \eqref{eq:2} for $z = 0$ and \eqref{eq:17} we obtain
\begin{align*}
	\mathfrak{\Phi}(\sigma(y,x))
	&\leqslant
	\int_\Omega \chi_0(h(o,x; \omega))^{\frac12} \chi_0(h(o,y; \omega))^{\frac12} \nu({\rm d} \omega) \\
	&=
	\int_\Omega \chi_0(h(o,x; \omega))^{\frac12} e^{\sprod{\sigma(o, y)}{\eta}} \nu({\rm d} \omega)
	=
	\mathfrak{\Phi}(\sigma(o,x)) e^{\sprod{\sigma(o, y)}{\eta}},
\end{align*}
thus
\begin{equation}
	\label{eq:64}
	\frac{\mathbf{\Phi}(\sigma(y,x))}{\mathbf{\Phi}(\sigma(o,x))}
	\leqslant e^{\langle \eta, \sigma(o,y) \rangle}, 
\end{equation}
which in turn immediately implies that for $p \in [2, \infty]$ we have 
\[
	|M_p(f)(x)|\leqslant  e^{|\eta|R} \|f\|_{\ell^1}.
\]
In Section \ref{sec:5} we also study the spaces $\ell^1_{s_p}(V_g)$, $p \in [1, \infty]$, which are defined as
\begin{equation}
	\label{eq:72}
	\ell^1_{s_p}(V_g) =
	\left\{f : V_g \rightarrow \CC : \sum_{y \in V_g} |f(y)| \vphi_{s_p}(y) < \infty
	\right\}.
\end{equation}
where $s_p$ is given by the formula \eqref{eq:5}. Notice that $\ell^1_{s_p}(V_g)$ is a Banach space with a norm
\[
	\|f\|_{\ell^1_{s_p}} = \sum_{y \in V_g} |f(y)| \vphi_{s_p}(y).
\]
Let us observe that $\ell^1_{s_1}(V_g) = \ell^1(V_g)$. Moreover, if $f$ is radial and belongs to $\ell^1(V_g)$ then it belongs
to all $\ell^1_{s_p}(V_g)$ for $p \in (1, \infty]$. This is a consequence of Lemma \ref{lem:2} and Proposition \ref{prop:1}.
Furthermore, we can extend the definition of the mass function $M_p$ to radial functions belonging to $\ell^1_{s_p}(V_g)$. 
In fact, in this case the mass is a constant. To see this, let us first observe that by Corollary \ref{cor:3} the Helgason 
transform is constant. Next, if $p \in [1, 2)$, then by Proposition \ref{prop:2}, we have
\begin{align*}
	M_p(f)(x) 
	&= \sum_{y \in V_g} f(y) \fint_{\Omega(o, x)} \chi_0(h(o, y; \omega))^{\frac1p} \nu({\rm d} \omega) \\
	&= \fint_{\Omega(o, x)} \sum_{y \in V_g} f(y) \chi_0(h(o, y; \omega))^{\frac1p} \nu({\rm d} \omega) \\
	&= \fint_{\Omega(o, x)} \calH f(s_p) \nu({\rm d} \omega) = \calH f(s_p).
\end{align*}
If $p \in [2, \infty]$, again by Proposition \ref{prop:2}, we
\[
	M_p(f)(x) 
	= \frac{1}{\vphi_0(x)} f \times \vphi_0(x)
	= \calH f(0) = \calH f(s_p).
\]

\subsection{Finitely supported initial datum}
In this section we discuss the $\ell^p$ asymptotic behavior of caloric functions. Let $k$ be the transition function of 
an aperiodic isotropic finite range random walk on good vertices of an affine building $\scrX$. The caloric function 
$u: \NN \times V_s \rightarrow \CC$ with initial datum being a finitely supported function $f$, is the unique solution to
\begin{equation}
	\label{eq:71}
	\begin{aligned}
		u(n+1; x) &= \sum_{y \in V_g} k(y, x) u(n; y), \\
		u(0; x) &= f(x), \quad (n, x) \in \NN \times V_g.
	\end{aligned}
\end{equation}
In fact, the solution has a form
\[
	u(n; x) = \sum_{y \in V_g} k(n; y, x)f(y), \qquad (n, x) \in \NN_0 \times V_g.
\]
We often write $u_n(x) = u(n; x)$. Our aim in this section is to prove Theorem \ref{thm:6}. 

Let us first discuss the asymptotic behavior of caloric functions with finitely supported initial datum on the critical regions
$\scrN_n^p$ which are defined by \eqref{eq:4a}, \eqref{eq:4b} and \eqref{eq:4c}.
\begin{theorem}
	\label{thm:7}
	Let $k_n$ be a transition kernel of an admissible random walk on good vertices of an affine building $\scrX$.
	Let $p \in [1, \infty]$. If $u$ is the solution to \eqref{eq:71} where $f$ is a finitely supported function on $V_g$,
	then
	\[
		\frac{1}{\|k_n\|_{\ell^p}}
		\left(
		\sum_{x \in \scrN_n^p} 
		\big|u(n; x) - M_p(f) (x)\, k(n; x) \big|^p
		\right)^{\frac{1}{p}}
		=
		\|f\|_{\ell^1}
		\calO\big( \varepsilon_n'\big)
	\] 
	where
	\[
		\varepsilon_n' =
		\begin{cases}
			n^{-\frac{1}{2} + \gamma} &\text{if } p \in [1, 2], \\
			n^{-1} r_n &\text{if } p \in (2, \infty],
		\end{cases}
	\]
	The implied constant depends on the size of the support of $f$.
\end{theorem}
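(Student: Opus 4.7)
The strategy is to write the difference as
$u(n;x) - M_p(f)(x) \, k(n;o,x) = \sum_{y \in \supp f} f(y) \bigl[ k(n;y,x) - \Theta_p(x,y)\, k(n;o,x) \bigr]$,
where, reading off \eqref{eq:63a} and \eqref{eq:63b}, the kernel $\Theta_p(x, y)$ equals the average of $\chi_0(h(o,y;\omega))^{1/p}$ over $\Omega(o,x)$ when $p \in [1, 2)$, and equals $\mathbf{\Phi}(\sigma(y, x))/\mathbf{\Phi}(\sigma(o, x))$ when $p \in [2, \infty]$, so that $M_p(f)(x) = \sum_y f(y) \Theta_p(x, y)$ by construction. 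After applying Minkowski's inequality in $\ell^p(\scrN_n^p)$, it suffices to bound the pointwise quantity $\bigl|k(n;y,x) - \Theta_p(x,y)\, k(n;o,x)\bigr| = k(n;o,x) \cdot \bigl| k(n;y,x)/k(n;o,x) - \Theta_p(x,y) \bigr|$ uniformly for $x \in \scrN_n^p$ and $y$ in the finite support of $f$, and then to invoke $\|k_n\|_{\ell^p(\scrN_n^p)} \lesssim \|k_n\|_{\ell^p}$ from Theorems \ref{thm:1}--\ref{thm:3}.

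For $p \in [1, 2)$, the region $\scrN_n^p$ forces $\sigma(o,x)$ to lie at distance $\mathcal{O}(n^{1/2+\gamma})$ from $n\delta_p$, with $\delta_p$ strictly inside the positive Weyl chamber (since $s_p$ is a positive multiple of the strongly dominant $\eta$, and $\nabla \log \kappa$ is $W$-equivariant). Hence, for $n$ large enough, $\sigma(o,x) \gg \sigma(o,y)$ for every $y$ in $\supp f$, so by \eqref{eq:4} the Busemann function $h(o, y; \omega)$ is constant, equal to $\sigma(o,x) - \sigma(y,x)$, on $\Omega(o, x)$. Thus $\Theta_p(x, y)$ collapses to $\chi_0^{-1/p}(\sigma(y,x) - \sigma(o,x))$, which is uniformly bounded in $x$ via \eqref{eq:14}. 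Corollary \ref{cor:2} then delivers $k(n;y,x)/k(n;o,x) = \Theta_p(x,y)\bigl(1 + \mathcal{O}(|\delta_n - \delta_p|) + \mathcal{O}(n^{-1})\bigr)$, and since $|\delta_n - \delta_p| \leqslant n^{-1/2+\gamma}$ on $\scrN_n^p$, the pointwise error is $\mathcal{O}(n^{-1/2+\gamma}) \, k(n;o,x)$, uniformly in $y$.

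For $p \in [2, \infty]$, Theorem \ref{thm:5} applies directly, yielding $k(n;y,x)/k(n;o,x) = \Theta_p(x,y)\bigl(1 + \mathcal{O}(|\sigma(o,x)|/n)\bigr)$, with $\Theta_p(x,y)$ uniformly bounded by $e^{|\eta|\,|\sigma(o,y)|}$ thanks to \eqref{eq:64}. On $\scrN_n^2$ one has $|\sigma(o,x)| \leqslant n^{1/2+\gamma}$, giving the rate $n^{-1/2+\gamma}$; on $\scrN_n^p$ for $p \in (2, \infty]$ one has $|\sigma(o,x)| \leqslant r_n$, giving the rate $n^{-1} r_n$. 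In both regimes the pointwise error is $\varepsilon_n' \cdot k(n;o,x)$ times a constant depending only on $\supp f$. Taking $\ell^p$ norms, summing against $\|k_n\|_{\ell^p(\scrN_n^p)}$, and invoking Theorems \ref{thm:1}--\ref{thm:3} then produces the claimed bound $\|f\|_{\ell^1} \cdot \mathcal{O}(\varepsilon_n') \cdot \|k_n\|_{\ell^p}$.

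The most delicate step is the $p \in [1, 2)$ case: one must correctly identify $\Theta_p$ with the simplified expression $\chi_0^{-1/p}(\sigma(y,x) - \sigma(o,x))$, which hinges on $\Omega(o, x) \subseteq \Omega(y, x)$ (holding via \eqref{eq:4} once $\sigma(o,x) \gg \sigma(o,y)$), and then check that the dominance and interior-of-$\mathcal{M}$ hypotheses of Corollary \ref{cor:2} persist uniformly on $\scrN_n^p$ for $n$ large. The $p \in [2, \infty]$ case is considerably more direct, essentially reducing to a single application of Theorem \ref{thm:5} coupled with the concentration estimates of Section \ref{sec:2}.
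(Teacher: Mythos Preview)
Your proposal is correct and follows essentially the same approach as the paper: factor out $k(n;o,x)$, compare the ratio $k(n;y,x)/k(n;o,x)$ to $\Theta_p(x,y)$ via Corollary~\ref{cor:2} (for $p\in[1,2)$, after collapsing the average over $\Omega(o,x)$ using \eqref{eq:4}) and via Theorem~\ref{thm:5} (for $p\in[2,\infty]$, with the ratio bounded by \eqref{eq:64}), then sum against $\|k_n\|_{\ell^p(\scrN_n^p)}\leqslant\|k_n\|_{\ell^p}$. Your tracking of the error as $\mathcal{O}(|\sigma(o,x)|/n)$ in Case~II is in fact cleaner than the paper's write-up, which drops the $n^{-1}$ in a couple of displayed lines (though its final rate is correct).
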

\begin{proof}
	Assume that the support of $f$ is contained in $B_R = \{x \in V_g: \abs{\sigma(o, x)} < R\}$, $R > 0$.
	We treat the cases $p \in [1, 2)$ and $p \in [2, \infty]$ separately.

	\vspace*{1ex}
	\noindent
	{Case {\bf I:} $p \in [1, 2)$.}
	For $x \in \scrN_n^p$, we write
	\begin{align*}
		&\sum_{y\in V_g} k(n; y, x) \,f(y)-M_p(f)(x)\, k(n; o, x) \\
		&\qquad\qquad
		=k(n;o,x) 
		\sum_{y\in B_R}
		f(y) \left( \frac{k (n;y,x)}{k(n;o,x)}-\fint_{\Omega(o,x)}\chi_0(h(o,y;\omega))^{\frac{1}{p}}\, 
		\nu({\rm d} \omega) \right).
	\end{align*}
	Since
	\begin{equation}
		\label{eq:32}
		\bigg|\frac{\sigma(o, x)}{n} - \delta_p\bigg| \leqslant n^{-\frac12+\gamma},
	\end{equation}
	we can use the quotient asymptotics from Corollary \ref{cor:2}. We obtain
	\[
		\frac{k(n;y,x)}{k(n;o,x)}
		=\chi_{0}(\sigma(o,x)-\sigma(y,x))^{\frac{1}{p}} \Big(1+\calO\big(n^{-1/2+\gamma}\big)\Big).
	\]
	By \eqref{eq:12},
	\[
		\sprod{\sigma(o,x)-\sigma(y,x)}{\eta} \leqslant \sprod{\sigma(o, y)}{\eta} \leqslant R|\eta|,
	\]
	therefore
	\[
		\frac{k(n;y,x)}{k(n;o,x)} = \chi_{0}(\sigma(o,x)-\sigma(y,x))^{\frac{1}{p}} + \calO\big(n^{-\frac12+\gamma}\big).
	\]
	On the other hand, by \eqref{eq:32}
	\[
		\sprod{\sigma(o, x)}{\alpha} \geq 2 R
	\]
	for all $\alpha \in \Phi^+$, and sufficiently large $n$. Thus for each $\omega \in \Omega(o, x)$  we have
	$x \in [y, \omega]$, hence by \eqref{eq:4},
	\[
		h(o,y; \omega)=\sigma(o,x)-\sigma(y, x).
	\]
	Therefore,
	\begin{align*}
		\fint_{\Omega(o,x)} \chi_0(h(o,y;\omega))^{\frac1p} \, \nu({\rm d}\omega)
		&=\fint_{\Omega(o,x)}\chi_{0}(\sigma(o, x)-\sigma(y, x))^{\frac1p} \, \nu({\rm d}\omega) \\
		&=\chi_{0}(\sigma(o,x)-\sigma(o,y))^{\frac1p},
	\end{align*}
 	and consequently,
	\begin{align*}
		\sum_{x \in \scrN_n^p}
		\left|
		u(n; x) 
		- M_p(f)(x)\, k(n;x)
		\right|^p
		&=
		\left(\sum_{x \in \scrN_n^p}
		k(n;o,x)^p\right) \|f\|_{\ell^1}^p \calO\Big(n^{p(-\frac12 + \gamma)}\Big) \\
		&\leqslant
		\|k_n\|_{\ell^p}^p \|f\|_{\ell^1}^p \calO\Big(n^{p(-\frac12 + \gamma)}\Big), 
	\end{align*}
	which completes the proof in this case.
	
	\vspace*{1ex}
	\noindent
	{Case {\bf II:} $p \in [2, \infty]$.} 
	For $x \in \scrN_n^p$, we have $|\sigma(o,x)|=\calO(\tilde{r}_n)$, where
	\[
		\tilde{r}_n=
		\begin{cases}
			n^{\frac{1}{2} + \gamma} &\text{if } p =2 \\
			r_n &\text{if } p \in (2, \infty],
		\end{cases}
	\]
	here we have used the notation introduced in \eqref{eq:40}. Thus by Theorem \ref{thm:5}, we have
	\begin{align*}
		&\sum_{y\in V_g} k(n;y,x)\,f(y) - M_p(f)(x)\, k(n;o,x) \\
		&\qquad\qquad=
		k(n; o, x) 
		\sum_{y \in B_{R}} f(y) 
		\left( \frac{k(n;y,x)}{k(n;o,x)} - \frac{\mathbf{\Phi}(\sigma(y,x))}{\mathbf{\Phi}(\sigma(o,x))} \right)\\
		&\qquad\qquad=
		k(n; o, x) 
		\sum_{y \in B_{R}} f(y)
		\left( \frac{\mathbf{\Phi}(\sigma(y,x))}{\mathbf{\Phi}(\sigma(o,x))}\big(1+\calO(\tilde{r}_n)\big)
		-
		\frac{\mathbf{\Phi}(\sigma(y,x))}{\mathbf{\Phi}(\sigma(o,x))} \right).
	\end{align*}
	By \eqref{eq:64}, for all $y \in B_R$, the ratio
	\[
		\sup_{x \in V_g}
		\frac{\mathbf{\Phi}(\sigma(y,x))}{\mathbf{\Phi}(\sigma(o,x))} \leqslant e^{\sprod{\eta}{\sigma(o, y)}}, 
	\]
	thus
	\[
		u(n; x) - M_p(f)(x)\, k(n; x)
		=
		k(n;x) \|f\|_{\ell^1} \calO(\tilde{r}_n).
	\]
	If $p \in [2, \infty)$, by summing up over critical region $\scrN_n^p$, we get
	\begin{align*}
		\sum_{x \in \scrN_n^p} \left|u(n; x) - M_p(f)(x)\, k(n; x)\right|^p
		&=
		\bigg(\sum_{x \in \scrN_n^p} k(n; x)^p\bigg) \|f\|_{\ell^1}^p \calO(\tilde{r}_n) \\
		&\leqslant
		\|k_n\|_{\ell^p}^p  \|f\|_{\ell^1}^p \calO(\tilde{r}_n)
	\end{align*}
	which completes the proof for $p \in [2, \infty)$. Lastly, for $p = \infty$ the reasoning is similar.
\end{proof}
		
In the following result we describe how caloric functions with finitely supported initial datum behave outside the $\ell^p$ 
critical regions $\scrN_n^p$.
\begin{theorem} 
	\label{thm:8}
	Let $k$ be a transition kernel of an admissible random walk on good vertices of an affine building $\scrX$.
	Let $p \in [1, \infty]$. There is $c > 0$ such that for each $u$ satisfying \eqref{eq:71} with $f$ being a finitely
	supported function on $V_g$, we have
	\[
		\frac{1}{\| k_n \|_{\ell^p}}
		\left(
		\sum_{x \in V_g \smallsetminus \scrN_n^p} |u(n; x)|^p
		\right)^{\frac{1}{p}}
		= \|f\|_{\ell^1} \calO\big( \varepsilon_n''\big)
	\]
	where 
	\[
		\varepsilon_n'' =
		\begin{cases}
			e^{-cn^{2\gamma}} &\text{if } p \in [1, 2], \\
			n^{-\gamma|\Phi^{++}|} &\text{if } p=2,\\
			e^{-c r_n} &\text{if } p\in (2, \infty].
		\end{cases}
	\]
	The implied constant depends on size of the support of $f$.
\end{theorem}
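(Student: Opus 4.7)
The plan is to reduce the estimate to the $\ell^p$-tail bounds for the heat kernel already established in Theorems \ref{thm:1}, \ref{thm:2} and \ref{thm:3}, applied with the starting point shifted from $o$ to each $y$ in the support of $f$. Suppose $\supp f \subseteq B_R = \{x \in V_g : |\sigma(o,x)| < R\}$. The representation $u(n;x) = \sum_y f(y)\, k(n;y,x)$ combined with the triangle inequality gives
\[
	|u(n;x)| \leqslant \sum_{y \in B_R} |f(y)|\, k(n;y,x),
\]
so Minkowski's inequality in $\ell^p(V_g \setminus \scrN_n^p)$ (and the analogous supremum estimate for $p = \infty$) reduces the statement to the uniform bound
\[
	\|k(n;y,\cdot)\|_{\ell^p(V_g \setminus \scrN_n^p)} \leqslant C_R \|k_n\|_{\ell^p}\, \varepsilon_n'', \qquad y \in B_R.
\]

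To obtain this, I would repeat the proofs of Theorems \ref{thm:1}, \ref{thm:2} and \ref{thm:3} verbatim, with $o$ replaced by $y$ throughout. The starting vertex enters those proofs only via the vectorial distance and the global upper bound \eqref{eq:3} together with the pointwise asymptotics \eqref{eq:3a} and \eqref{eq:3b}; the latter hold with $o$ replaced by $y$, with implicit constants depending continuously on $y$ and hence uniform on the finite set $B_R$. This yields the corresponding tail bound for $k(n;y,\cdot)$ on the complement of a critical region $\tilde{\scrN}_n^p(y)$ defined via $\sigma(y,\cdot)$ instead of $\sigma(o,\cdot)$. The key comparison step uses \eqref{eq:14} together with the identity $\sigma(y,x) = -w_0.\sigma(x,y)$ and the fact that $w_0$ preserves Euclidean lengths and sends $\Phi^+$ into $-\Phi^+$, which together yield
\[
	|\sigma(y,x) - \sigma(o,x)| \leqslant R \quad\text{and}\quad
	|\sprod{\sigma(y,x) - \sigma(o,x)}{\alpha}| \leqslant |\alpha| R
\]
for every $\alpha \in \Phi^+$. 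Since the scales defining $\scrN_n^p$ (namely $n^{1/2+\gamma}$, $n^{1/2-\gamma'}$, or $r_n$ depending on $p$) grow much faster than the fixed constant $R$, these uniform shifts imply $V_g \setminus \scrN_n^p \subseteq V_g \setminus \tilde{\scrN}_n^p(y)$ for large $n$, once $\tilde{\scrN}_n^p(y)$ is defined with slightly tightened thresholds, and the desired estimate transfers to the sum over $V_g \setminus \scrN_n^p$.

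The main obstacle is verifying that the asymptotic formulas \eqref{eq:3}, \eqref{eq:3a} and \eqref{eq:3b} from \cite{tr} extend to the kernel $k(n;y,\cdot)$ with constants uniform in $y \in B_R$. Once granted, the proofs of Theorems \ref{thm:1}--\ref{thm:3} port without substantive change, because at each step — the Laplace-type integration against $e^{-n\phi(\delta)}$ in the Cram\'er zone, the small-$\delta$ analysis via $\mathbf{\Phi}(\sigma(y,x))\approx \chi_0(\sigma(y,x))^{-1/2}\prod_{\alpha \in \Phi^{++}}(1+\sprod{\alpha}{\sigma(y,x)})$, and the $\chi_0^{1-p/2}$ exponential-decay bound in the far regime for $p \in (2,\infty]$ — the starting vertex enters only through $\sigma(y,x)$.
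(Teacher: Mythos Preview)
Your proposal is correct and follows essentially the same route as the paper: reduce via Minkowski to per-$y$ tail bounds, use \eqref{eq:14} (and its consequence \eqref{eq:11}) to show $V_g \setminus \scrN_n^p \subseteq V_g \setminus \tilde{\scrN}_{n,y}^p$ for a shifted critical region with slightly tightened thresholds, and then rerun the arguments of Theorems \ref{thm:1}--\ref{thm:3} with base point $y$. The one point you flag as an ``obstacle'' is in fact a non-issue: since the walk is isotropic, $k(n;y,x)$ depends only on $\sigma(y,x)$, so the bounds \eqref{eq:3}, \eqref{eq:3a}, \eqref{eq:3b} hold verbatim with $\sigma(o,x)$ replaced by $\sigma(y,x)$ and with \emph{identical} constants --- no uniformity in $y$ needs to be checked.
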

\begin{proof}
	Assume that $f$ is supported on $B_R= \{y \in V_g: \, |\sigma(o,y)|< R\}$, for some $R>0$. If $p \in [1, \infty)$,
	then by the Minkowski inequality we have
	\begin{align*}
		\Bigg(\sum_{x \in V_g \setminus \scrN_n^p} 
		\big| \sum_{y \in B_{R}} k(n; y, x) \, f(y) \big|^p \bigg)^{\frac{1}{p}}
		&\leqslant 
		\sum_{y\in B_{R}}
		\bigg( \sum_{x \in V_g \setminus \scrN_n^p} k(n;y,x)^p\, |f(y)|^p \bigg)^{\frac{1}{p}} \notag\\
		&\leqslant
		\sum_{y\in B_{R}}|f(y)|\bigg( \sum_{x \in V_g \setminus \scrN_n^p} k(n;y,x)^p \bigg)^{\frac{1}{p}}. 
	\end{align*}
	Now, let us first consider the case $p \in [1, 2)$. Observe that if $x \in V_g \setminus \scrN_n^p$, then 
	\[
		\left|\sigma(o,x)-n\delta_p\right|>n^{\frac{1}{2}+\gamma},
	\]
	and hence by \eqref{eq:14}
	\begin{align*}
		\left|\sigma(y,x)-n\delta_p\right|
		&> 
		\left|\sigma(o,x)-n\delta_p\right| - \left|\sigma(y,x)-\sigma(o,x)\right|\\
		&> 
		n^{\frac{1}{2}+\gamma}-R.
	\end{align*}
	Therefore, if $n \geqslant 4R^2$, then $x \in V_g \setminus \tilde{\scrN}_{n, y}^p$ where
	\[
		\tilde{\scrN}_{n, y}^p
		=
		\left\{x \in V_g: |\sigma(y,x) - n \delta_p | \leqslant \tfrac{1}{2} n^{\frac{1}{2}+\gamma} \right\}.
	\]
	Next, by repeating the reasoning from the proof of Theorem \ref{thm:1}, one can show that there is $c > 0$
	such that for each $y \in B_R$,
	\[
		\sum_{x \in V_g \setminus \tilde{\scrN}_{n, y}^p} k(n;y,x)^p
		\lesssim
		\|k_n\|_{\ell^p}^p e^{-c p n^{2\gamma}} 
	\]
	where the implied constant is independent of $y \in B_R$. Hence, if $n \geqslant 4R^2$, then we get
	\begin{align*}
		\sum_{y\in B_{R}}|f(y)|\bigg( \sum_{x \in V_g \setminus \scrN_n^p} k(n;y,x)^p \bigg)^{\frac{1}{p}}
		&\leqslant
		\sum_{y \in B_R} |f(y)| \bigg( \sum_{x \in V_g \setminus \tilde{\scrN}_{n, y}^p} k(n;y,x)^p \bigg)^{\frac{1}{p}} \\
		&\lesssim
		\|k_n\|_{\ell^p} e^{-c n^{2\gamma}} \|f\|_{\ell^1} 
	\end{align*}
	which completes the proof for $p \in [1, 2)$.
	
	The case when $p \in [2, \infty]$ is treated in a similar way. Namely, if $p \in (2, \infty)$ we observe that for
	$x \in V_g \setminus \scrN_n^p$, we have
	\[
		|\sigma(y,x)| \geqslant |\sigma(o, x)| - |\sigma(o, y)| \geqslant r_n - R,
	\]
	thus by \eqref{eq:40}, there is $N_1 \geqslant 1$ such that $r_n \geqslant 2 R$ for all $n \geqslant N_1$. In particular, 
	for all for $n \geqslant N_1$ and $y \in B_R$, we have $x \in V_g \setminus \tilde{N}_{n, y}^p$ where
	\[
		\tilde{\scrN}_{n, y}^p
		=\left\{
		x \in V_g : |\sigma(y, x)| \geqslant \tfrac{1}{2} r_n
		\right\}.
	\]
	By repeating the arguments used in the proof of Theorem \ref{thm:3}, one can show that
	\[
		\sum_{x \in V_g \setminus \tilde{\scrN}_{n, y}^p}
		k(n; y, x)^p
		\lesssim
		\|k_n\|_{\ell^p}^p e^{-c p r_n}.
	\]
	Analogous reasoning applies to $p = \infty$.

	Lastly, we consider $p = 2$. We set
	\[
		\tilde{\scrN}_{n, y}^2
		=
		\left\{ x \in V_g : 2 n^{\frac{1}{2}-\gamma} 
		\leqslant 
		|\sigma(y, x)| 
		\leqslant 
		\tfrac{1}{2} n^{\frac{1}{2}+\gamma}, \text{ and }
		\sprod{\sigma(y, x)}{\alpha} \geqslant 2 n^{\frac{1}{2} - \gamma'}
   		\text{ for all } \alpha\in \Phi^{+}\right\}.
	\]
	If $x \in \tilde{\scrN}_{n, y}^2$, then by \eqref{eq:83}, we have
	\begin{align*}
		n^{\frac{1}{2}-\gamma} \leqslant 2 n^{\frac{1}{2}-\gamma} - R 
		\leqslant |\sigma(y, x)| - |\sigma(o, y)|
		\leqslant
		|\sigma(o, x)|
	\intertext{and \eqref{eq:14}}
		|\sigma(o, x)| 
		\leqslant |\sigma(o, y)| + |\sigma(y, x)| \leqslant R + \frac{1}{2} n^{\frac{1}{2}+\gamma} 
		\leqslant n^{\frac{1}{2}+\gamma}.
	\end{align*}
	Finally, for each $\alpha \in \Phi^{+}$, by \eqref{eq:11},
	\begin{align*}
		\sprod{\sigma(o, x)}{\alpha} 
		\geqslant \sprod{\sigma(y, x)}{\alpha} - \max_{\alpha \in \Phi^+} |\alpha| |\sigma(o, y)|
		\geqslant n^{\frac{1}{2} - \gamma'}.
	\end{align*}
	Hence, there is $N_1 \geqslant 1$ such that for all $n \geqslant N_1$ and $y \in B_R$,
	$V_g \setminus \scrN_n^2 \subset V_g \setminus \tilde{\scrN}_{n, y}^2$.
	Now, following the arguments in the proof of Theorem \ref{thm:2}, one can show that for each $y \in B_R$,
	\[
		\sum_{x \in V_g \setminus \tilde{\scrN}_{n, y}^p} k(n;y,x)^2
		\lesssim
		\|k_n\|_{\ell^2}^2 n^{-2 \gamma |\Phi^{++}|} 
	\]
	where the implied constant is independent of $y \in B_R$. This completes the proof of the theorem.
\end{proof}

We are now in a position to prove the main theorem of this section.
\begin{theorem}
	\label{thm:6}
	Let $k_n$ be a transition kernel of an admissible random walk on good vertices of an affine building $\scrX$.
	Let $p \in [1, \infty]$. Then for each $u$ satisfying \eqref{eq:71} with $f$ being a finitely
    supported function on $V_g$, we have
	\[
		\frac{1}{\|k_n\|_{\ell^p}}
		\left\| u_n - M_p(f) k_n \right\|_{\ell^p}
		=
		\Big(\|f\|_{\ell^1} + \|M_p(f)\|_{\ell^\infty}\Big)
		\calO\big(\varepsilon_n \big)
	\]
	where 
	\[
		\varepsilon_n =
		\begin{cases}
			n^{-\frac{1}{2}+\gamma} &\text{if } p \in [1, 2), \\
			n^{-\frac{|\Phi^{++}|}{2 |\Phi^{++}|+2}} &\text{if } p=2,\\
			n^{-1}r_n &\text{if } p\in (2, \infty].
		\end{cases}
	\]
	The implied constant depends on the size of the support of $f$.
\end{theorem}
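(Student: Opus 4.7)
The plan is to decompose $V_g = \scrN_n^p \sqcup (V_g\smallsetminus\scrN_n^p)$ and estimate $\|u_n - M_p(f)\,k_n\|_{\ell^p}$ on the two pieces separately, then optimize the free parameter $\gamma$ in the case $p=2$. First I would apply the triangle inequality to get
\[
	\|u_n - M_p(f)\,k_n\|_{\ell^p}
	\leqslant
	\|u_n - M_p(f)\,k_n\|_{\ell^p(\scrN_n^p)}
	+ \|u_n\|_{\ell^p(V_g\smallsetminus \scrN_n^p)}
	+ \|M_p(f)\|_{\ell^\infty}\,\|k_n\|_{\ell^p(V_g\smallsetminus\scrN_n^p)}.
\]

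The first summand is exactly what Theorem \ref{thm:7} controls, yielding $\|f\|_{\ell^1}\,\|k_n\|_{\ell^p}\,\calO(\varepsilon_n')$. The second summand is controlled by Theorem \ref{thm:8}, giving $\|f\|_{\ell^1}\,\|k_n\|_{\ell^p}\,\calO(\varepsilon_n'')$. For the third summand, observe that the second halves of Theorems \ref{thm:1}, \ref{thm:2}, \ref{thm:3} provide precisely the $\ell^p$ heat-kernel concentration estimate $\|k_n\|_{\ell^p(V_g\smallsetminus\scrN_n^p)}=\calO(\varepsilon_n''\,\|k_n\|_{\ell^p})$ with the \emph{same} $\varepsilon_n''$ as in Theorem \ref{thm:8}. (Note that $M_p(f)$ is a bounded function, as shown in Section \ref{sec:6}, so $\|M_p(f)\|_{\ell^\infty}$ is finite.) Collecting these three contributions and dividing by $\|k_n\|_{\ell^p}$ we arrive at
\[
	\frac{1}{\|k_n\|_{\ell^p}}\,\|u_n - M_p(f)\,k_n\|_{\ell^p}
	=
	\bigl(\|f\|_{\ell^1} + \|M_p(f)\|_{\ell^\infty}\bigr)\,\calO(\varepsilon_n' + \varepsilon_n'').
\]

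It remains to read off $\varepsilon_n' + \varepsilon_n''$ case by case. For $p\in[1,2)$, one has $\varepsilon_n' = n^{-1/2+\gamma}$ while $\varepsilon_n'' = e^{-cn^{2\gamma}}$, so the first term dominates and gives $\varepsilon_n = n^{-1/2+\gamma}$. For $p\in(2,\infty]$, $\varepsilon_n'=n^{-1}r_n$ and $\varepsilon_n''=e^{-c r_n}$, and the condition $r_n/\log n\to\infty$ from \eqref{eq:40} makes $\varepsilon_n''$ negligible, so $\varepsilon_n = n^{-1}r_n$. For $p=2$ one has $\varepsilon_n'=n^{-1/2+\gamma}$ and $\varepsilon_n''=n^{-\gamma|\Phi^{++}|}$; equating the exponents gives the balanced choice $\gamma = \frac{1}{2(|\Phi^{++}|+1)}$, for which both terms equal $n^{-|\Phi^{++}|/(2|\Phi^{++}|+2)}$.

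The proof reduces essentially to bookkeeping, with almost all of the work done in Theorems \ref{thm:7} and \ref{thm:8} together with the concentration statements of Theorems \ref{thm:1}--\ref{thm:3}. The only place where genuine choice is required is the optimization of $\gamma$ for $p=2$; the main thing to check is that the balancing value is admissible in the definitions of $\scrN_n^2$ and $\gamma'$ (i.e. compatible with the constraints $0<\gamma<\tfrac{1}{4|\Phi^{++}|}$ and $\gamma'<\tfrac12$ used in Theorem \ref{thm:2}), and if not, that the estimates of Theorems \ref{thm:2}, \ref{thm:7} and \ref{thm:8} remain valid up to this enlarged admissible range of $\gamma$. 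This verification, together with the monotonicity of the concentration estimates in $\gamma$, is the only delicate point; everything else is a direct combination of already established results.
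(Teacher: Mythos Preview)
Your proposal is correct and follows essentially the same approach as the paper: split over $\scrN_n^p$ and its complement, invoke Theorems \ref{thm:7} and \ref{thm:8} together with the concentration halves of Theorems \ref{thm:1}--\ref{thm:3}, and then balance $\gamma$ at $p=2$. You are in fact more careful than the paper, which simply declares $\gamma=\tfrac{1}{2(|\Phi^{++}|+1)}$ without checking it against the constraint $\gamma<\tfrac{1}{4|\Phi^{++}|}$ from \eqref{eq:38}; your flagging of this admissibility issue is well taken and is indeed the only point requiring further inspection.
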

\begin{proof}
	Using the triangle inequality, we can write
	\begin{align*}
		\big\|u_n - M_p(f) k_n \big\|_{\ell^p(V_g \setminus \scrN_n^p)}
		\leqslant
		\big\|u_n \big\|_{\ell^p(V_g \setminus \scrN_n^p)}
		+
		\|M_p(f)\|_{\ell^\infty}
		\big\| k_n \big\|_{\ell^p(V_g \setminus \scrN_n^p)}^p.
	\end{align*}
	Since the mass function $M_p(f)$ is bounded, Theorem \ref{thm:8}, together with Theorems \ref{thm:1}, \ref{thm:2} and 
	\ref{thm:3}, implies that
	\[
		\big\|u_n - M_p(f) k_n \big\|_{\ell^p(V_g \setminus \scrN_n^p)}
		=
		\|k_n\|_{\ell^p}
		\big(
		\|f\|_{\ell^1} + \|M_p(f)\|_{\ell^\infty}
		\big)
		\calO\big(\varepsilon''_n \big).
	\]
	By Theorem \ref{thm:7}, we have
	\[
		\big\|u_n-M_p(f) k_n\big\|_{\ell^p(\scrN_n^p)}
		= \|k_n\|_{\ell^p} \|f\|_{\ell^1} \calO\big(\varepsilon'_n)
	\]
	hence to make $\varepsilon''_n = \calO\big(\varepsilon'_n)$, in the case $p = 2$, we need to take
	\[
		\gamma = \frac{1}{2(|\Phi^{++}|+1)}
	\]
	which finishes the proof of the theorem.
\end{proof}

\begin{remark}
	\label{rem:1}
	Given $f$ a finitely supported function on $V_g$, we can set
	\[
		\tilde{M}_2(f)(x) =\sum_{y\in V_s}f(y)\fint_{\Omega(o,x)}\chi_{0}(h(o,y;\omega))^{\frac{1}{2}}\, \nu({\rm d}\omega),
		\quad x \in V_g.
	\]
	Following the arguments as in the proof of Theorem \ref{thm:7} for $p \in [1, 2)$ but this time in $\ell^2$, one can show 
	that 
	\[
		\lim_{n \to \infty}
		\frac{1}{\|k_n\|_{\ell^2}}
		\left(\sum_{x \in \scrN_n^2} \big|u_n(x) - \tilde{M}_p(f)(x) k_n(x)\big|^2\right)^{1/2}
		=0,
	\]
	which together with Theorem \ref{thm:2} and Theorem \ref{thm:8} implies again
	\[
		\lim_{n \to \infty}
		\frac{1}{\|k_n\|_{\ell^2}}
		\left\| u_n - M_p(f) k_n \right\|_{\ell^2} = 0.
	\]
	The details are left to the interested reader.
\end{remark}

\subsection{Radial $\ell^1$ datum and beyond}
\label{sec:5}
In this section we prove $\ell^p$ convergence of caloric functions for larger classes of initial datum than being just
finitely supported. We first discuss the case of radial functions in $\ell^1(V_g)$. 

Let us recall that if $f$ is radial function in $\ell^1(V_g)$ then it belongs to all $\ell^1_{s_p}(V_g)$, $p \in [1, \infty]$,
see \eqref{eq:72} for the definition. In Section \ref{sec:6} we have already seen that for a radial function $f$ in
$\ell_{s_p}^1(V_g)$ the mass function is constant
\[
	M_p(f) \equiv \calH f(s_p).
\]
\begin{theorem}
	\label{thm:11}
	Let $k_n$ be a transition kernel of an admissible random walk on good vertices of an affine building $\scrX$.
	Let $p \in [1, \infty]$. Then for each $u$ satisfying \eqref{eq:71} with $f$ being a radial function in $\ell^1(V_g)$,
	we have
	\[
		\lim_{n \to \infty} \frac{1}{\|k_n\|_{\ell^p}}
		\left\| u_n - M_p(f) k_n \right\|_{\ell^p}
		=0.
	\]
\end{theorem}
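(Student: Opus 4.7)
The plan is to reduce to the finitely supported case, already handled by Theorem \ref{thm:6}, via a density argument. Given $N > 0$, set $f_N(y) = f(y) \ind{|\sigma(o,y)| \leqslant N}$; since $f$ is radial so is $f_N$, and by dominated convergence $\|f - f_N\|_{\ell^1} \to 0$ as $N \to \infty$. Write $u_n^g$ for the caloric function with initial datum $g$, so that by linearity $u_n - u_n^{f_N} = u_n^{f - f_N}$.

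The key uniform estimate I would establish is that for any $g : V_g \to \CC$,
\[
\|u_n^g\|_{\ell^p} \leqslant \|g\|_{\ell^1} \|k_n\|_{\ell^p}.
\]
By Minkowski's inequality, $\|u_n^g\|_{\ell^p} \leqslant \sum_{y \in V_g} |g(y)| \|k(n; y, \cdot)\|_{\ell^p}$. An induction on $n$, relying on strong vertex regularity of $\scrX$ (\cite[Theorem 5.21]{park3}) and the isotropy of the one-step kernel, shows that $k(n; y, x)$ depends only on $\sigma(y, x)$; since $\#V_\mu(y) = N_\mu$ is independent of the basepoint $y$, one concludes $\|k(n; y, \cdot)\|_{\ell^p} = \|k_n\|_{\ell^p}$ for every $y \in V_g$, yielding the bound. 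In addition, as recalled in Section \ref{sec:6}, for any radial $g \in \ell^1(V_g)$ the mass $M_p(g)$ is the constant $\calH g(s_p)$, and since $s_p \in [-1,1]\eta + i U_0$, estimate \eqref{eq:74} gives
\[
|M_p(g)| \leqslant \|g\|_{\ell^1}.
\]

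Combining these two ingredients with the triangle inequality,
\[
\frac{\|u_n - M_p(f) k_n\|_{\ell^p}}{\|k_n\|_{\ell^p}} \leqslant \frac{\|u_n^{f_N} - M_p(f_N) k_n\|_{\ell^p}}{\|k_n\|_{\ell^p}} + \frac{\|u_n^{f - f_N}\|_{\ell^p}}{\|k_n\|_{\ell^p}} + |M_p(f - f_N)| \leqslant \frac{\|u_n^{f_N} - M_p(f_N) k_n\|_{\ell^p}}{\|k_n\|_{\ell^p}} + 2\|f - f_N\|_{\ell^1}.
\]
Given $\varepsilon > 0$, I would first fix $N$ large enough that $2 \|f - f_N\|_{\ell^1} < \varepsilon/2$. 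Since $f_N$ is finitely supported, Theorem \ref{thm:6} makes the remaining term less than $\varepsilon/2$ for all $n$ sufficiently large, completing the proof.

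The main technical point is the basepoint independence $\|k(n; y, \cdot)\|_{\ell^p} = \|k_n\|_{\ell^p}$, which rests on the inductive use of strong vertex regularity to propagate the $\sigma$-dependence of the one-step kernel to all iterated kernels. Once this is in hand, the remaining argument is a routine density approximation, relying only on Theorem \ref{thm:6} and the continuity of $M_p$ on radial $\ell^1$ functions.
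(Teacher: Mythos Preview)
Your proposal is correct and follows essentially the same approach as the paper: approximate $f$ in $\ell^1$ by a finitely supported radial function, invoke Theorem~\ref{thm:6} for the truncation, and control the remaining two terms via the Minkowski bound $\|u_n^g\|_{\ell^p}\leqslant \|g\|_{\ell^1}\|k_n\|_{\ell^p}$ and the $\ell^1$-continuity of $M_p$ on radial data. You are simply more explicit than the paper about the basepoint independence $\|k(n;y,\cdot)\|_{\ell^p}=\|k_n\|_{\ell^p}$ and you use \eqref{eq:74} directly rather than the pointwise bound on $\vphi_{s_p}$, but the argument is the same.
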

\begin{proof}
	Let $\epsilon > 0$. Let $g$ be a radial finitely supported function on $V_g$ such that
	\[
		\|f-\widetilde{f}\|_{\ell^1} < \sum_{y\in V_s}|f(y)-\widetilde{f}(y)|\, <\tfrac{1}{3} \epsilon.
	\]
	Given $p \in [1, \infty]$, by Lemma \ref{lem:2} and Proposition \ref{prop:1}, there is $C_p \geqslant 1$, such that
	for all $y \in V_g$,
	\[
		0 \leqslant \vphi_{s_p}(y) \leqslant C_p.
	\]
	Hence,
	\begin{align*}
		|M_p(f) - M_p(g)| 
		&\leqslant \sum_{y \in V_g} |f(y) - g(y)| \vphi_{s_p}(y) \\
		&\leqslant \tfrac{1}{3} C_p \epsilon.
	\end{align*}
	Let $\tilde{u}$ be the caloric function with initial datum $g$, that is
	\[
		\tilde{u}(n; x) = \sum_{y \in V_g} k(n; y, x) g(y).
	\]
	By Theorem \ref{thm:6}, there is $N_1 \geqslant 1$ such that for all $n \geqslant N_1$,
	\[
		\frac{1}{\|k_n\|_{\ell^p}} \big\|\tilde{u}(n; \cdot) - M_p(g) k(n; o, \cdot) \big\|_{\ell^p}
		\leqslant \tfrac{1}{3} \epsilon.
	\]
	Hence,
	\begin{align*}
		\|u(n;\cdot)-M_p(f) p(k; o,\cdot)\|_{\ell^p}
		&\leqslant
		\|u(n;\cdot)-\tilde{u}(n;\cdot)\|_{\ell^p} 
		+\|M_{p}(f) k(n;o, \cdot) - M_p(\tilde{f}) k_n \|_{\ell^p} \\
		&\phantom{\leqslant\|u(n;\cdot)-\tilde{u}(n;\cdot)\|_{\ell^p}\ }
		+\|\tilde{u}(n;\cdot)-M_p(\tilde{f}) k_n \|_{\ell^p} \\
		&\leqslant
		\Big(\tfrac{1}{3} \epsilon + \tfrac{1}{3} C_p \epsilon + \tfrac{1}{3}\Big) \|k_n\|_{\ell^p}
	\end{align*}
	which completes the proof.
\end{proof}

In the following theorem, we extend the previous result to radial functions in $\ell^1_{s_p}(V_g)$. 
The proof draws on both the Kunze--Stein phenomenon and Herz’s \emph{principe de majoration}, tools that are available only 
when the underlying building is of Bruhat--Tits type. While the Kunze--Stein phenomenon can be justified for radial convolutors
even in the case of exotic buildings, we do not see how to establish Herz’s \emph{principe de majoration} in that setting.
\begin{theorem}
	\label{thm:9}
	Let $\mathbf{G}$ be a simply connected semisimple algebraic group defined over a locally compact non-Archimedean field,
	and let $\scrX$ be its Bruhat--Tits building. Let $k_n$ be a transition kernel of an admissible random walk on good vertices
	of $\scrX$. Let $p \in [1, \infty]$. Then for each $u$ satisfying \eqref{eq:71} with $f$ being a radial function
	in $\ell^1_{s_p}(V_g)$, we have
	\[
		\lim_{n \to \infty} 
		\frac{1}{\|k_n \|_{\ell^p}}
		\big\|u_n-M_p(f) \, k_n\|_{\ell^p} = 0.
	\]
\end{theorem}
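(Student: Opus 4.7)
The plan is to mirror the density argument used in Theorem \ref{thm:11}, but with $\ell^1$ approximation replaced by approximation in the weighted space $\ell^1_{s_p}(V_g)$. Since $\vphi_{s_p}$ is bounded (by Lemma \ref{lem:2} and Proposition \ref{prop:1}), truncating $f$ to larger and larger balls $B_R$ and applying dominated convergence shows that radial finitely supported functions are dense in $\ell^1_{s_p}(V_g)$. So given $\epsilon > 0$, choose a radial finitely supported $\tilde{f}$ with $\|f - \tilde{f}\|_{\ell^1_{s_p}} < \epsilon$. Since radial functions in $\ell^1_{s_p}(V_g)$ have mass $M_p(\cdot) \equiv \calH(\cdot)(s_p)$, and since $|\calH g(s_p)| \leqslant \|g\|_{\ell^1_{s_p}}$ for any radial $g$, we immediately get $|M_p(f) - M_p(\tilde{f})| < \epsilon$.

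Let $\tilde{u}_n$ denote the caloric function with initial datum $\tilde{f}$. By Theorem \ref{thm:6}, $\|\tilde{u}_n - M_p(\tilde{f})\, k_n\|_{\ell^p} = o(\|k_n\|_{\ell^p})$. A triangle inequality then reduces the claim to establishing the uniform bound
\[
	\|u_n - \tilde{u}_n\|_{\ell^p} \;\leqslant\; C \,\|f - \tilde{f}\|_{\ell^1_{s_p}} \,\|k_n\|_{\ell^p},
\]
for a constant $C = C(p, \scrX)$ independent of $n$. Writing $h = f - \tilde{f}$, which is radial and in $\ell^1_{s_p}(V_g)$, we have $u_n - \tilde{u}_n = h \times k_n$ and the task reduces to a mapping property of the radial convolver $h$ on $\ell^p(V_g)$.

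This is the heart of the argument, and the point where the Bruhat--Tits hypothesis is used. For $p \in [2, \infty]$, Herz's \emph{principe de majoration} for radial convolvers yields
\[
	\|h \times g\|_{\ell^p} \;\leqslant\; \|h\|_{\ell^1_{s_p}} \,\|g\|_{\ell^p},
\]
essentially because $\vphi_{s_p} = \vphi_0$ dominates the spherical transform of $|h|$ on the relevant parameter strip. For $p \in [1, 2)$, one instead appeals to the Kunze--Stein phenomenon for radial convolvers: the weight $\vphi_{s_p}$ is precisely the one that ensures $|\calH h|$ stays bounded on the strip $s_p + iU_0$, and interpolation with the trivial $\ell^\infty \to \ell^\infty$ bound gives the same estimate. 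Appendix \ref{sec:8} supplies short proofs of both facts, but both rely on a transitive group action (or at least the Haar-measure framework of Bruhat--Tits theory), which is why the theorem is restricted to this setting. Applying this bound with $g = k_n$ gives $\|u_n - \tilde{u}_n\|_{\ell^p} \leqslant C\epsilon\,\|k_n\|_{\ell^p}$.

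Combining the three estimates yields $\|u_n - M_p(f) k_n\|_{\ell^p} \leqslant (C + 1)\epsilon\,\|k_n\|_{\ell^p} + o(\|k_n\|_{\ell^p})$, and since $\epsilon > 0$ is arbitrary, the theorem follows. The main obstacle, as already flagged in the text preceding the statement, is the lack of any analogue of Herz's majoration on exotic affine buildings: without a transitive group action, there is no natural convolution structure and no apparent substitute tool, which is exactly why the statement is confined to Bruhat--Tits buildings while Theorem \ref{thm:11} (the $\ell^1$ radial case) extends to the exotic setting via elementary approximation alone.
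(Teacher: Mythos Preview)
Your density argument is the right framework, and for $p \in [1,2]$ it matches the paper's proof (though you have the labels swapped: the bound $\|h \times k_n\|_{\ell^p} \leqslant \calH(|h|)(s_p)\,\|k_n\|_{\ell^p}$ for $p \in (1,2]$ is exactly Herz's \emph{principe de majoration}, Theorem~\ref{thm:A:2}, not Kunze--Stein). The case $p=1$ is just Young's inequality since $\vphi_{s_1}\equiv 1$.

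The gap is in your treatment of $p \in (2,\infty]$. You assert that Herz gives $\|h \times g\|_{\ell^p} \leqslant \|h\|_{\ell^1_{s_p}}\|g\|_{\ell^p}$, i.e.\ with constant $\calH(|h|)(0)$ since $s_p=0$ for $p\geqslant 2$. But Herz's principle is stated and proved only for $p\in(1,2]$; dualizing to $p>2$ yields the operator norm $\calH(|h|)(s_{p'})$ with $p'\in(1,2)$, hence $s_{p'}=\eta(2/p'-1)\neq 0$. Since $\vphi_{s_{p'}}(y)$ decays strictly more slowly than $\vphi_0(y)$ (compare the exponential factors in Proposition~\ref{prop:1} and the integral representation~\eqref{eq:9}), the inequality $\calH(|h|)(s_{p'}) \lesssim \calH(|h|)(0)$ fails, and the hypothesis $h\in\ell^1_{s_p}=\ell^1_0$ is genuinely too weak for a direct Herz bound.

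The paper sidesteps this by a bootstrapping argument: for $p>2$ one writes $u_n - M_p(f)k_n = \big(u_{n-m} - M_p(f)k_{n-m}\big)\times k_m$ with $m\approx n/2$ via the semigroup property, then applies (the dual of) the Kunze--Stein phenomenon, Corollary~\ref{cor:A:2}, to obtain
\[
	\|u_n - M_p(f)k_n\|_{\ell^p} \leqslant C_p\,\|u_{n-m} - M_p(f)k_{n-m}\|_{\ell^2}\,\|k_m\|_{\ell^2}.
\]
Since $\|k_m\|_{\ell^2}^2 \approx \|k_n\|_{\ell^p}$ and $\|k_{n-m}\|_{\ell^2}\approx\|k_m\|_{\ell^2}$ by Theorems~\ref{thm:2} and~\ref{thm:3}, this reduces the problem to the already-established case $p=2$, where $s_2=0=s_p$ so the hypothesis $f\in\ell^1_0$ is exactly right. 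This reduction step is the missing ingredient in your argument.
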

\begin{proof}
	First, let us consider $p \in [1, 2]$. We use a density argument to lift the convergence result of Theorem \ref{thm:6}. 
	Let $f \in \ell^1(V_g)$ be a radial function. Let $\epsilon > 0$. Let $\tilde{f}$ be a radial and finitely supported 
	function such that
	\[
		\sum_{y \in V_g} |f(y) - \tilde{f}(y)| \vphi_{s_p}(y) \leqslant \tfrac{1}{3} \epsilon.
	\]
	Since
	\begin{align*}
		\big| M_p(f)-M_p(\tilde{f}) \big| 
		&\leqslant 
		\sum_{y \in V_g} \big|f(y) - \tilde{f}(y)\big| \vphi_{s_p}(y) = \calH(|f - \tilde{f}|)(s_p) ,
	\end{align*}
	we have
	\[
		\big\|M_p(f) k_n - M_p(\tilde{f}) k_n\|_{\ell^p} \leqslant \tfrac{1}{3} \epsilon \|k_n\|_{\ell^p}.
	\]
	Let $\tilde{u}$ be the caloric function with initial datum $\tilde{f}$.
	Then by Theorem \ref{thm:6}, there is $N_1 \geqslant 1$ such that for all $n \geqslant N_1$,
	\[
		\frac{1}{\|k_n\|_{\ell^p}} \,\|\tilde{u}(n; \cdot)-M_p(\tilde{f}) \, k(n;o, \cdot) \|_{\ell^p}
		\leqslant \tfrac{1}{3} \epsilon.
	\]
	Since $k_n$ is radial, by the radial version of Herz's principle, see Theorem \ref{thm:A:2}, we get
	\[
		\big\|u(n;\cdot)-\tilde{u}(n;\cdot)\big\|_{\ell^p}
		\leqslant
		\calH (|f - \tilde{f}|)(s_p) \|k_n\|_{\ell^p} 
		\leqslant 
		\tfrac{1}{3} \epsilon \|k_n\|_{\ell^p}.
	\]
	Therefore, for $n \geqslant N_1$, we obtain
	\begin{align*}
		\big\|u(n; \cdot) - M_p(f) k(n; \cdot) \big\|_{\ell^p}
		&\leqslant
		\big\|u(n;\cdot)-\tilde{u}(n;\cdot)\big\|_{\ell^p}
		+\big\|M_{p}(f)\, k_n - M_p(\tilde{f}) k_n\big\|_{\ell^p} \\
		&\phantom{\leqslant \big\|u(n;\cdot)-\tilde{u}(n;\cdot)\big\|_{\ell^p} \ }
		+\big\|\tilde{u}_n - M_p(\tilde{f}) k_n \|_{\ell^p} \\
		&\leqslant \epsilon \|k_n\|_{\ell^p},
	\end{align*}
	which finishes the proof.

	Assume that $p \in (2, \infty]$. Let $m = n - \lceil \frac{n}{2}\rceil$. Since
	\[
		k(n; x, y) = \sum_{z \in V_g} k(m; x, z) k(n-m; z, y),
	\]
	we have
	\begin{align*}
		u(n; x) 
		&= \sum_{y \in V_g} f(y) k_n(y, x) \\
		&= \sum_{y \in V_g} f(y) \sum_{z \in V_g} k_{n-m}(y, z) k_{m}(z, x) \\
		&= \sum_{z \in V_g} u(n-m; z) k_{m}(z, x).
	\end{align*}
	Hence,
	\begin{align*}
		u(n; x) - M_p(f) k(n; x) = \sum_{z \in V_g} \big( u(n-m; z) - M_p(f) k_{n-m}(z)\big) k_m(z, x).
	\end{align*}
	Therefore, by the Kunze--Stein phenomenon, see Corollary \ref{cor:A:2}, we get 
	\begin{align*}
		\left\| u(n; \cdot) - M_p(f) k(n; \cdot) \right\|_{\ell^p}
		&=
		\left\|  \sum_{z \in V_g} \big( u(n-m; z) - M_p(f) k_{n-m}(z)\big) k_m(z, x) \right\|_{\ell^p(x)} \\
		&\leqslant
		C_p
		\left\|  u(n-m; \cdot) - M_p(f) k_{n-m}(\cdot) \right\|_{\ell^2}
		\|k_{m}\|_{\ell^2}.
	\end{align*}
	Since
	\[
		m = 
		\frac{1}{2} 
		\begin{cases}
			n &\text{if $n$ is even,} \\
			n-1 &\text{if $n$ is odd,}
		\end{cases}
	\]
	in view of Theorems \ref{thm:2} and \ref{thm:3}, for all $p \in [2, \infty]$,
	\begin{align*}
		\|k_m\|_{\ell^2}^2 
		&\approx m^{-\frac{r}{2} - |\Phi^{++}|} \varrho^{2m} \\
		&\approx n^{-\frac{r}{2} - |\Phi^{++}|} \varrho^n
		\approx \|k_n\|_{\ell^p},
	\intertext{and}
		\|k_{n-m}\|_{\ell^2}^2
		&\approx
		(n-m)^{-\frac{r}{2} - |\Phi^{++}|} \varrho^{2(n-m)}\\
		&\approx m^{-\frac{r}{2} - |\Phi^{++}|} \varrho^{2m}
	    \approx \|k_m\|_{\ell^2}^2.
	\end{align*}
	Hence, by the first part of the proof, for $p = 2$ we conclude that
	\[
		\lim_{n \to \infty} 
		\frac{1}{\|k_n\|_{\ell^p}}
		\left\|  u(n-m; \cdot) - M_p(f) k_{n-m}(\cdot) \right\|_{\ell^2} \|k_m\|_{\ell^2} = 0.
	\]
	The proof is now complete.
\end{proof}

Lastly, we study the $\ell^p$-convergence of the caloric functions with boundary datum from a class of functions which are not 
necessarily radial nor finitely supported. To this end, for each $p \in [1, \infty]$ we introduce the following weights,
\[
	w_p(y)=\begin{cases}
		e^{\frac{2}{p} \sprod{\sigma(o,y)}{\eta}} &\text{if } p \in [1, 2), \\
		e^{ \sprod{\sigma(o,y)}{\eta}} &\text{if } p \in [2, \infty],
	\end{cases}
\]
where $y \in V_g$. Since $w_p(y) \geqslant 1$, we have
\[
	\ell^1(w_p)=\left\{f:V_g\rightarrow \mathbb{C}:\quad \sum_{y\in V_g}|f(y)|\,w_p(y)<+\infty \right\}\subseteq \ell^1.
\]
Moreover, if $f \in \ell^1(w_p)$ then the mass function $M_p(f)$ is well-defined and bounded. 
Indeed, if $p \in [1, 2)$, then by \eqref{eq:17} we have 
\begin{align*}
	|M_p(f)(x)|
	&\leqslant
	\sum_{y\in V_g} |f(y)| 
	\fint_{\Omega(o,x)} e^{\frac{2}{p} \sprod{\eta}{h(o, y;\omega)}} \, \nu({\rm d}\omega) \\
	&=\sum_{y\in V_g} |f(y)|\, e^{\frac{2}{p} \sprod{\sigma(o, y)}{\eta}} < \infty.
\end{align*}
On the other hand, if $p \in [2, \infty]$, then by \eqref{eq:51}, \eqref{eq:60} and \eqref{eq:64} we have
\begin{align*}
	|M_p(f)(x)|
	&\leqslant
	\sum_{y \in V_g} |f(y)| \frac{\mathbf{\Phi}(\sigma(y, x))}{\mathbf{\Phi}(\sigma(y, x))} \\
	&\leqslant
	\sum_{y\in V_s} |f(y)|\, e^{ \sprod{\sigma(o,y)}{\eta}}.
\end{align*}
For $\ell^1(w_p)$ we have the following result.
\begin{theorem}
	\label{thm:10}
	Let $k_n$ be a transition kernel of an admissible random walk on good vertices of an affine building $\scrX$.
	Let $p \in [1, \infty]$. Then for each $u$ satisfying \eqref{eq:71} with $f \in \ell^1(w_p)$, we have
	\[
		\lim_{n \to \infty}
		\frac{1}{\|k_n \|_{\ell^p}}
		\big\|u_n-M_p(f) \, k_n\|_{\ell^p} = 0.
	\]
\end{theorem}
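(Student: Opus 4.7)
The approach is a density argument reducing the problem to the finitely supported case settled in Theorem \ref{thm:6}. Given $\epsilon > 0$, choose a finitely supported $\tilde{f}$ on $V_g$ with $\|f - \tilde{f}\|_{\ell^1(w_p)} < \epsilon$; such an approximation exists by truncating $f$ on a sufficiently large ball in the vectorial distance. Write $\tilde{u}_n$ for the caloric function with initial datum $\tilde{f}$ and split
\[
\|u_n - M_p(f) k_n\|_{\ell^p} \leqslant \|u_n - \tilde{u}_n\|_{\ell^p} + \|\tilde{u}_n - M_p(\tilde{f}) k_n\|_{\ell^p} + \|M_p(f) - M_p(\tilde{f})\|_{\ell^\infty} \|k_n\|_{\ell^p}.
\]
The middle summand is $o(\|k_n\|_{\ell^p})$ by Theorem \ref{thm:6}. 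The third is controlled by the bounds recorded just before the theorem statement, which combine \eqref{eq:17} in the case $p \in [1, 2)$ and \eqref{eq:64} in the case $p \in [2, \infty]$ to show that $M_p \colon \ell^1(w_p) \to \ell^\infty(V_g)$ has operator norm at most one; applied to $f - \tilde{f}$ this gives $\|M_p(f) - M_p(\tilde{f})\|_{\ell^\infty} < \epsilon$, and the summand is at most $\epsilon \|k_n\|_{\ell^p}$.

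The heart of the argument is controlling the first summand uniformly in $n$, and this is where I expect the only real (and mild) subtlety to lie. Starting from
\[
u_n(x) - \tilde{u}_n(x) = \sum_{y \in V_g} \bigl(f(y) - \tilde{f}(y)\bigr)\, k(n; y, x),
\]
Minkowski's inequality in $\ell^p(V_g)$ yields
\[
\|u_n - \tilde{u}_n\|_{\ell^p} \leqslant \sum_{y \in V_g} |f(y) - \tilde{f}(y)|\, \|k(n; y, \cdot)\|_{\ell^p}.
\]
The decisive observation is that $\|k(n; y, \cdot)\|_{\ell^p}$ is independent of the base point $y \in V_g$: the isotropy of the random walk forces $k(n; y, x) = F_n(\sigma(y, x))$ to depend on $(x, y)$ only through the vectorial distance (by induction on $n$, using that the Hecke-type structure constants in a regular building depend only on $\sigma(y, x)$), and then $\#V_\lambda(y) = N_\lambda$ independent of $y$ gives
\[
\|k(n; y, \cdot)\|_{\ell^p}^p = \sum_{\lambda \in P^+} N_\lambda F_n(\lambda)^p = \|k_n\|_{\ell^p}^p
\]
for every $y \in V_g$ (with the analogous statement for $p = \infty$). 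Since $w_p \geqslant 1$, we conclude
\[
\|u_n - \tilde{u}_n\|_{\ell^p} \leqslant \|f - \tilde{f}\|_{\ell^1} \|k_n\|_{\ell^p} \leqslant \|f - \tilde{f}\|_{\ell^1(w_p)} \|k_n\|_{\ell^p} < \epsilon \|k_n\|_{\ell^p}.
\]
Collecting the three estimates yields $\limsup_{n \to \infty} \|k_n\|_{\ell^p}^{-1} \|u_n - M_p(f) k_n\|_{\ell^p} \leqslant 2\epsilon$, and since $\epsilon$ was arbitrary the theorem follows. Unlike the Bruhat--Tits case of Theorem \ref{thm:9}, this argument requires neither Herz's \emph{principe de majoration} nor the Kunze--Stein phenomenon; the translation invariance of the heat-kernel $\ell^p$-norm, which is available for all admissible walks on regular affine buildings, suffices and is precisely what lets the statement persist for exotic buildings.
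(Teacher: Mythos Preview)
Your proof is correct and follows essentially the same density argument as the paper: approximate $f$ by a finitely supported $\tilde{f}$ in $\ell^1(w_p)$, split into three terms, handle the middle term by Theorem \ref{thm:6}, bound the mass-function difference via \eqref{eq:17} or \eqref{eq:64}, and control $\|u_n - \tilde{u}_n\|_{\ell^p}$ by Minkowski together with $\|k(n;y,\cdot)\|_{\ell^p} = \|k_n\|_{\ell^p}$. Your explicit justification of this last equality (via isotropy and $\#V_\lambda(y) = N_\lambda$) is a detail the paper leaves implicit, but otherwise the arguments coincide.
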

\begin{proof}
	Let $p \in [1, 2)$ and $\epsilon > 0$. Since $f w_p \in \ell^1(V_g)$, there is $g$ a finitely supported function on
	$V_g$ such that
	\begin{align*}
		\|f - g\|_{\ell^1} 
		&\leqslant \sum_{x \in V_g} |f(x) - g(x)| \\
		&\leqslant \sum_{x \in V_g} |f(x) - g(x)| w_p(x) 
		\leqslant \tfrac{1}{3} \epsilon. 
	\end{align*}
	Moreover, by \eqref{eq:17}, for each $x \in V_g$,  we have
	\begin{align*}
		\big| M_p(f)(x) - M_p(g)(x)\big| 
		&\leqslant
		\sum_{y \in V_g} |f(x) - g(x)| 
		\fint_{\Omega(o, x)} e^{\frac{2}{p}\sprod{\eta}{h(o,y;\omega)}}\, \nu({\rm d}\omega) \\
		&\leqslant  \sum_{y\in V_g} |f(y)-g(y)|\, e^{\frac{2}{p}\sprod{\sigma(o,y)}{\eta}}
		< \tfrac{1}{3}\epsilon
	\end{align*}
	thus
	\[
		\big\|M_p(f) k_n - M_p(g) k_n \big\|_{\ell^p}
		\leqslant
		\tfrac{1}{3}\epsilon \|k_n\|_{\ell^p}.
	\]
	Let $\tilde{u}$ denote the caloric function with initial datum $g$. Then by Theorem \ref{thm:6}, there is $N_1 \geqslant 1$,
	such that for all $n \geqslant N_1$,
	\[
		\frac{1}{\|k_n\|_{\ell^p}}
		\big\|\tilde{u}(n; \cdot)-M_p(g) k_n\big\|_{\ell^p}
		\leqslant \tfrac{1}{3} \epsilon.
	\]
	Using the Minkowski's inequality for integrals, we get
	\begin{align*}
		\big\|u(n; \cdot) - \tilde{u}(n; \cdot) \big\|_{\ell^p}
		&=
		\bigg\|\sum_{y \in V_g} (f(y)-g(y)) k(n; y, \cdot) \bigg\|_{\ell^p} \\
		&\leqslant
		\|f -g\|_{\ell^1} \|k_n\|_{\ell^p} 
		\leqslant \tfrac{1}{3} \epsilon  \|k_n\|_{\ell^p}.
	\end{align*}
	Hence, for $n \geqslant N_1$,
	\begin{align*}
		\big\|u(n;\cdot)-M_p(f) k_n\|_{\ell^p}
		&\leqslant
		\big\|u(n; \cdot) - \tilde{u}(n; \cdot) \big\|_{\ell^p}
		+
		\big\|\tilde{u}(n; \cdot)-M_p(g) k_n \big\|_{\ell^p}
		+
		\big\|M_p(f)  k_n - M_p(g) k_n \big\|_{\ell^p}
		\\
		&\leqslant \epsilon \|k_n\|_{\ell^p}.
	\end{align*}
	The proof for $p \in [2, \infty]$ follows by the same line of reasoning provided one uses \eqref{eq:64} in place of 
	\eqref{eq:17}.
\end{proof}
	
\appendix

\section{Herz's principle}
\label{sec:8}
In this appendix we discuss the radial version of Herz's \emph{principe de majoration} under the assumption that the building
is the Bruhat--Tits building, that is the affine building associated to a simply connected semisimple algebraic group defined
over a locally compact non-Archimedean field. Then we deduce the Kunze--Stein phenomenon. Both facts were proved in \cite{veca}, 
however in this paper we just need their radial versions and they have much simpler proofs.

\begin{theorem}[Herz's \emph{principe de majoration}]
	\label{thm:A:2}
	Let $\mathbf{G}$ be a simply connected semisimple algebraic group defined over a locally compact non-Archimedean field,
	and let $\scrX$ be its Bruhat--Tits building. Let $p \in (1, 2]$. Suppose that $K$ is a radial function in 
	$\ell^1_{s_p}(V_g)$. Then for each radial $f \in \ell^p(V_g)$,
	\[
		\|f \times K\|_{\ell^p} \leqslant \calH (|K|)(s_p) \|f\|_{\ell^p}.
	\]
\end{theorem}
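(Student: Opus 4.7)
The plan is to derive the bound via Schur's test with the Macdonald spherical function $\vphi_{s_p}$ as the weight. WLOG $K \geqslant 0$, since $|f \times K(x)| \leqslant |f| \times |K|(x)$ pointwise and $\||f|\|_{\ell^p} = \|f\|_{\ell^p}$.

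The heart of the argument is the pair of eigenfunction identities
\begin{align*}
    \sum_{y \in V_g} K(\sigma(y,x))\, \vphi_{s_p}(y) &= \vphi_{s_p}(x)\, \calH K(s_p), \\
    \sum_{x \in V_g} K(\sigma(y,x))\, \vphi_{s_p}(x) &= \vphi_{s_p}(y)\, \calH K(s_p),
\end{align*}
valid for all $x, y \in V_g$. I would obtain the first identity by expanding the radial function $K$ as $K(\sigma(y,x)) = \sum_\mu K(\mu) \mathbf{1}_{V_\mu(x)}(y)$, invoking the eigenfunction relation $A_\mu \vphi_{s_p} = P_\mu(s_p)\vphi_{s_p}$ from \eqref{eq:60} to obtain $\sum_{y \in V_\mu(x)} \vphi_{s_p}(y) = N_\mu P_\mu(s_p) \vphi_{s_p}(x)$, and summing over $\mu$ using the representation $\calH K(s_p) = \sum_\mu K(\mu) N_\mu P_\mu(s_p)$, which follows from Corollary \ref{cor:3} and the $W$-invariance $\vphi_{-s_p} = \vphi_{s_p}$ (a consequence of $w_0.\eta = -\eta$, Lemma \ref{lem:3}(\ref{en:2:2})). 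The second identity follows by the same computation with the roles of $x$ and $y$ exchanged, using the radial symmetry $K(\sigma(y,x)) = K(\sigma(x,y))$.

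With these identities, Schur's test applied to the operator $T_K f(x) = \sum_y K(\sigma(y,x)) f(y)$ with weight $\omega = \vphi_{s_p}$ and the common constant $\calH K(s_p)$ on both sides yields $\|T_K\|_{\ell^p \to \ell^p} \leqslant \calH K(s_p)$, which is precisely the claimed bound. The main obstacle is extracting the sharp constant across the full range $p \in (1, 2]$: the standard Hölder proof of Schur's test closes immediately at $p = 2$ (because $\omega^{p-1} = \omega$ itself satisfies the eigenfunction identity), whereas for $p \in (1, 2)$ the naive $L^p$ version of the test introduces a spurious power of $\omega$ and degrades the constant. To recover the sharp bound in this intermediate range, I would exploit the self-adjointness of $T_K$ inherited from the radial symmetry of $K$: the eigenfunction identities express that $T_K$ is bounded by $\calH K(s_p)$ on the weighted endpoints $\ell^\infty(\vphi_{s_p}^{-1})$ and $\ell^1(\vphi_{s_p})$, and Stein's complex interpolation applied to an analytic family such as $T_z f = \vphi_{s(z)}^{-1}\bigl((\vphi_{s(z)} f) \times K\bigr)$ with $s(z) = (1-z)\eta$ then transfers the bound to plain $\ell^p$ with the matching constant $\calH K(s_p)$, precisely because the weights on the two boundary lines of the strip coincide with $\vphi_{s_p}$ at the interpolation point.
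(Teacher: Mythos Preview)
Your eigenfunction identities are correct and do give the sharp $\ell^2$ bound via Schur's test (or directly via Plancherel). The gap is in the final interpolation step, which does not produce the sharp constant $\calH K(s_p)$ for $p \in (1,2)$.

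Concretely: the identities say $T_K$ is bounded with norm $\calH K(s_p)$ on the weighted endpoints $\ell^1(\vphi_{s_p})$ and $\ell^\infty(\vphi_{s_p}^{-1})$. Complex interpolation between these two spaces at parameter $\theta$ yields $\ell^{1/(1-\theta)}$ with weight $\vphi_{s_p}^{1-2\theta}$, which is \emph{unweighted} only when $\theta=\tfrac12$, i.e.\ $p=2$. For $p\neq 2$ you land in a genuinely weighted space, not in $\ell^p(V_g)$. Your proposed fix via the analytic family $T_z f = \vphi_{s(z)}^{-1}\bigl((\vphi_{s(z)} f)\times K\bigr)$ has two problems: first, on the boundary lines $\Re z \in\{0,1\}$ the function $\vphi_{s(z)}$ is complex-valued (indeed $\vphi_{(1-it)\eta}$ oscillates and may vanish), so $\vphi_{s(z)}^{-1}$ is not a legitimate multiplier; second, at the real interpolation point $z=\theta$ the operator $T_\theta$ equals $f\times K$ only when $\vphi_{s(\theta)}\equiv 1$, i.e.\ $\theta=0$, so the family does not recover the target operator where you need it. There is no evident choice of analytic family that simultaneously lands on the unweighted space and delivers the correct $p$-dependent constant $\calH K(s_p)$, because $s\mapsto\calH K(s)$ is not log-linear along $[0,\eta]$.

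The paper's proof takes a genuinely different route: it exploits the group structure (Iwasawa decomposition $G=NAK$) to establish the \emph{pointwise} inequality $|f\times h(x)| \le \vphi_{s_p}(x)\,\|f\|_{\ell^p}\|h\|_{\ell^{p'}}$ for radial $f,h$, via H\"older's inequality on $N\times A$ with the modular weight $\chi_0$ split as $\chi_0^{1/p}\chi_0^{1/p'}$; the spherical function $\vphi_{s_p}$ then emerges from integrating $\chi_0(h(o,\cdot;\omega))^{1/p}$ over $K$ (equivalently over $\Omega$). The Herz bound follows in one line by duality: $\langle f\times K,g\rangle = \langle K,\check f\times g\rangle \le \calH(|K|)(s_p)\|f\|_{\ell^p}\|g\|_{\ell^{p'}}$. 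This is precisely the step that appears to require the group action; note the authors remark just before the theorem that they do not see how to establish Herz's principle for exotic buildings, which is consistent with your building-intrinsic Schur-test approach stalling outside $p=2$.
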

\begin{proof}
	We first show that for all radial functions $f \in \ell^p(V_g)$ and $h \in \ell^{p'}(V_g)$ and $x \in V_g$,
	\begin{equation}
		\label{eq:76}
		|f \times h(x)| \leqslant \vphi_{s_p}(x) \|f\|_{\ell^p} \|h\|_{\ell^{p'}}.
	\end{equation}
	Let $x = g_0 . o$. We have
	\begin{align*}
		h \times f (x) 
		&= \sum_{y \in V_g} h(y) f(\sigma(y, x)) \\
		&= \int_G h(g.o) f(\sigma(g.o, g_0.o)) {\: \rm d} g\\
		&= \int_G h(g^{-1}.o) f(g g_0.o) {\: \rm d} g\\
		&= \int_K \int_{N A} h\big((nak)^{-1}.o\big) f(nak g_0.o) 
		\chi_0\big(-h(o, nak.o; \omega_0)\big) {\: \rm d} n {\: \rm d} a {\: \rm d} k \\
		&= \int_K \int_{N A} h\big(a^{-1}n^{-1}.o\big) f(nak g_0.o) 
		\chi_0\big(-h(o, a.o; \omega_0)\big) {\: \rm d} n {\: \rm d} a {\: \rm d} k.
	\end{align*}
	Now, for a fixed $k \in K$, we apply H\"older's inequality to get
	\begin{align*}
		&
		\bigg|
		\int_{N A} h\big(a^{-1}n^{-1}.o\big) f(nak g_0.o)
		\chi_0\big(-h(o, a.o; \omega_0)\big) {\: \rm d} n {\: \rm d} a
		\bigg| \\
		&\qquad\leqslant
		\bigg(\int_{N A} \big|h\big(a^{-1}n^{-1}.o\big)\big|^{p'} 
		\chi_0\big(-h(o, a.o; \omega_0)\big) {\: \rm d} n {\: \rm d} a \bigg)^{1/{p'}} \\
		&\qquad\qquad
		\bigg(
		\int_{N A} \big| f(nak g_0.o) \big|^p
		\chi_0\big(-h(o, a.o; \omega_0)\big) {\: \rm d} n {\: \rm d} a
		\bigg)^{1/p}.
	\end{align*}
	We observe that
	\begin{align*}
		\int_{N A} \big|h\big(a^{-1}n^{-1}.o\big)\big|^{p'} 
		\chi_0\big(-h(o, a.o; \omega_0)\big) {\: \rm d} n {\: \rm d} a
		=
		\int_K \int_{N A} \big|h\big(k^{-1} a^{-1}n^{-1}.o\big)\big|^{p'}
		{\: \rm d} n {\: \rm d} a {\: \rm d} k  = \|h\|_{\ell^{p'}}^{p'}.
	\end{align*}
	We next write
	\[
		k g_0 = n' a' k'.
	\]
	Hence, for $m = n a n' a^{-1} \in N$, we have
	\[
		n a k g_0 = n an'a^{-1} a a' k' = m a a' k'. 
	\]
	Consequently,
	\begin{align*}
		&
		\int_{N A} \big| f(nak g_0.o) \big|^p \chi_0\big(-h(o, a.o; \omega_0)\big) 
		{\: \rm d} n {\: \rm d} a \\
		&\qquad=
		\int_{N A} \big| f(m aa'.o) \big|^p \chi_0\big(-h(o, a.o; \omega_0)\big) 
		{\: \rm d} m {\: \rm d} a \\
		&\qquad=
		\int_{N A} \big| f(m b.o) \big|^p \chi_0\big(-h(a', b.o; \omega_0)\big)
		{\: \rm d} m {\: \rm d} b \\
		&\qquad=
		\chi_0\big(h(o, a'.o; \omega_0)\big) 
		\int_{N A} \big| f(m b.o) \big|^p \chi_0\big(-h(o, b.o; \omega_0)\big)
		{\: \rm d} m {\: \rm d} b\\
		&\qquad=
		\chi_0\big(h(o, k.x; \omega_0)\big) \|f\|_{\ell^p}^p.
	\end{align*}
	Since $K$ acts simply transitively on $\Omega$, and
	\[
		\chi_0\big(h(o, k.x; \omega_0)\big)^{\frac1p} = \chi_0\big(h(o, k.x; \omega_0)\big)^{\frac12}
		e^{\sprod{s_p}{h(o, k.x; \omega_0)}},
	\]
	we get
	\begin{align*}
		\int_K \chi_0\big(h(o, k.x; \omega_0)\big)^{\frac1p} {\: \rm d} k
		&=
		\int_\Omega 
		\chi_0\big(h(o, x; \omega_0)\big)^{\frac1p} \nu({\rm d} \omega) \\
		&=\vphi_{s_p}(x),
	\end{align*}
	which easily leads to \eqref{eq:74}. 

	Now, setting $\check{f}(\lambda) = f(\lambda^\star)$, by \eqref{eq:74}, we obtain
	\begin{align*}
		\sprod{f \times K}{g} 
		&=\sprod{K}{\check{f} \times g} \\
		&\leqslant \sum_{x \in V_g} |K(x)| \big| \check{f} \times g(x) \big| \\
		&\leqslant \sum_{x \in V_g} |K(x)| \vphi_{s_p}(x) \|\check{f}\|_{\ell^p} \|g\|_{\ell^{p'}} \\
		&\leqslant \calH(|K|)(s_p) \|f\|_{\ell^p} \|g\|_{\ell^{p'}},
	\end{align*}
	and the theorem follows.
\end{proof}

\begin{corollary}[Kunze--Stein phenomenon]
	\label{cor:A:2}
	Let $\mathbf{G}$ be a simply connected semisimple algebraic group defined over a locally compact non-Archimedean field,
	and let $\scrX$ be its Bruhat--Tits building. Let $p \in [1, 2)$. There is $C_p > 0$ such that for all radial
	$K \in \ell^p(V_g)$ and $f \in \ell^2(V_g)$,
	\[
		\|f \times K\|_{\ell^2} \leqslant C_p \|K\|_{\ell^p} \|f\|_{\ell^2}.
	\]
\end{corollary}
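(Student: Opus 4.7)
The plan is to combine the Plancherel identity for the Helgason transform (Theorem~\ref{thm:12}) with the convolution identity of Proposition~\ref{prop:3} to express $\|T_K f\|_{\ell^2}^2$ as an integral involving $|\calH K|^2$ and $|\calH f|^2$; this reduces the operator-norm bound to the sup-norm estimate $\sup_{\theta \in \scrD}|\calH K(i\theta)| \leq C_p \|K\|_{\ell^p}$, which in turn follows from H\"older's inequality (equivalently, Herz's principle at $p=2$ together with the embedding $\ell^p(V_g) \hookrightarrow \ell^1_{s_2}(V_g)$) once one knows that the ground state spherical function $\mathbf{\Phi}$ lies in $\ell^{p'}(V_g)$ for $p'>2$.

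By a density argument---compactly supported radial functions are dense in radial $\ell^p(V_g)$, and $\ell^p \subseteq \ell^2$ for $p<2$, so Fatou's lemma transfers the bound to the limit---it suffices to treat $K$ radial and compactly supported, hence in $\ell^1(V_g)$. For any finitely supported $f$ and such $K$, Proposition~\ref{prop:3} together with Corollary~\ref{cor:3} yield
\[
	\calH(f \times K)(i\theta, \omega) = \calH K(i\theta) \, \calH f(i\theta, \omega),
\]
with $\calH K(i\theta)$ independent of $\omega$. Applying Theorem~\ref{thm:12} first to $f \times K$ and then to $f$ gives
\[
	\|f \times K\|_{\ell^2}^2
	= \int_{\scrD} |\calH K(i\theta)|^2 \int_{\Omega} |\calH f(i\theta, \omega)|^2 \, \nu({\rm d}\omega) \, \pi({\rm d}\theta)
	\leq \Big(\sup_{\theta \in \scrD} |\calH K(i\theta)|\Big)^2 \|f\|_{\ell^2}^2.
\]

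For the supremum, Corollary~\ref{cor:3} and the integral representation \eqref{eq:9} yield $|\vphi_{-i\theta}(y)| = |P_{\sigma(o,y)}(-i\theta)| \leq P_{\sigma(o,y)}(\Re(-i\theta))$; for $\theta \in U_0$ this equals $\mathbf{\Phi}(\sigma(o,y))$, and H\"older's inequality then produces
\[
	|\calH K(i\theta)| \leq \sum_{y \in V_g} |K(y)| \mathbf{\Phi}(\sigma(o,y)) \leq \|K\|_{\ell^p} \|\mathbf{\Phi}\|_{\ell^{p'}}.
\]
Finiteness of $\|\mathbf{\Phi}\|_{\ell^{p'}}$ for $p'>2$ follows from Proposition~\ref{prop:1} together with $N_\lambda \approx \chi_0(\lambda)$:
\[
	\|\mathbf{\Phi}\|_{\ell^{p'}}^{p'} \approx \sum_{\lambda \in P^+} e^{-(p'-2)\sprod{\eta}{\lambda}} \prod_{\alpha \in \Phi^{++}} (1+\sprod{\alpha}{\lambda})^{p'},
\]
which converges since the exponential decay dominates the polynomial growth. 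Setting $C_p := \|\mathbf{\Phi}\|_{\ell^{p'}}$ completes the argument in the standard case.

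The main obstacle I foresee is handling the exceptional case $\scrD = U_0 \sqcup U_1$: for $\theta \in U_1$ one has $\Re(-i\theta) \neq 0$, so the bound $|P_\lambda(-i\theta)| \leq \mathbf{\Phi}(\lambda)$ fails. A refined estimate derived from the explicit $\bfc$-function formula on $U_1$ is needed to ensure that, even after accounting for the different Plancherel density $\phi_1^{-1}$ on $U_1$, the contribution to $\sup_{\theta \in \scrD}|\calH K(i\theta)|$ remains controlled by $\|K\|_{\ell^p}$ up to an adjusted multiplicative constant absorbed into $C_p$.
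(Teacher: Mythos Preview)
Your approach is genuinely different from the paper's. The paper never invokes the Plancherel measure or Theorem~\ref{thm:12}; instead it uses the pointwise bound \eqref{eq:76} from the proof of Theorem~\ref{thm:A:2} (proved via the Iwasawa decomposition $G=NAK$) at the exponent $p=2$, namely $|f\times g(x)|\le \vphi_0(x)\|f\|_{\ell^2}\|g\|_{\ell^2}$ for radial $f,g$, and then takes $\ell^{p'}$ norms and dualizes. Both routes rest on the same key ingredient you identified, $\vphi_0=\mathbf{\Phi}\in\ell^{p'}(V_g)$ for $p'>2$, but the paper's argument is purely on the space side and never splits into standard and exceptional cases.

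Your spectral argument is correct in the standard case ($\scrD=U_0$), but the exceptional case is a genuine gap, and it is deeper than you indicate. You flag only the failure of $|P_\lambda(-i\theta)|\le\mathbf{\Phi}(\lambda)$ for $\theta\in U_1$. There is a second issue: for $\theta\in U_1$ the parameter $i\theta$ is not purely imaginary, so $\calH\bar f(-i\theta,\omega)\neq\overline{\calH f(i\theta,\omega)}$, and Theorem~\ref{thm:12} does \emph{not} yield the isometry $\|g\|_{\ell^2}^2=\int_\scrD\int_\Omega|\calH g|^2\,\nu\,\pi$ that your displayed chain of inequalities uses. To repair this one would have to work with the Gelfand transform of the $C^\ast$-algebra $\scrA_2$ directly (so that the $\ell^2$ operator norm of convolution by $K$ equals $\sup_{\theta\in\scrD}|h_{i\theta}(A_K)|$) and then bound $|P_\lambda(i\theta)|$ uniformly on $U_1$; this is doable but requires real work. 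The paper's duality argument sidesteps the whole issue because \eqref{eq:76} is proved group-theoretically and is insensitive to the shape of $\scrD$.
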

\begin{proof}
	Let us observe that $\vphi_0 \in \ell^{p'}(V_g)$. Indeed, by Proposition \ref{prop:1}, we have
	\begin{align*}
		\sum_{x \in V_g} 
		\vphi_0(x)^{p'} 
		&\approx
		\sum_{\lambda \in P^+} \Phi(\lambda)^{p'} \chi_0(\lambda) \\
		&\approx
		\sum_{\lambda \in P^+} \chi_0(\lambda)^{-\frac{p'}2+1} \prod_{\alpha \in \Phi^+} 
		\big(1 + |\sprod{\lambda}{\alpha}|\big) < \infty.
	\end{align*}
	Consequently, by \eqref{eq:76}, for a radial function $g \in \ell^2(V_g)$ we get
	\[
		\|f \times g\|_{\ell^{p'}} 	\leqslant \|f\|_{\ell^2} \|g\|_{\ell^2} \|\vphi_0\|_{\ell^{p'}}.
	\]
	Since $f \times g$ is radial, for each radial $K$ belonging to $\ell^p(V_g)$,
	\[
		\sprod{K}{f \times g} \leqslant \|K\|_{\ell^p} \|f\|_{\ell^2} \|g\|_{\ell^2} \|\vphi_0\|_{\ell^{p'}},
	\]
	thus
	\[
		\sprod{K \times \check{f}}{g} \leqslant \|K\|_{\ell^p} \|f\|_{\ell^2} \|g\|_{\ell^2} \|\vphi_0\|_{\ell^{p'}}.
	\]
	Hence,
	\[
		\|K \times \check{f}\|_{\ell^2} 
		\leqslant 
		\|K\|_{\ell^p} \|f\|_{\ell^2} 
		\|\vphi_0\|_{\ell^{p'}},
	\]
	which completes the proof.
\end{proof}

	\newcommand{\SL}{{\operatorname{SL}}} \newcommand{\SU}{{\operatorname{SU}}}
  \newcommand{\UU}{{\operatorname{U}}} \newcommand{\Sp}{{\operatorname{Sp}}}
  \newcommand{\SO}{{\operatorname{SO}}}

\end{document}